\renewcommand{\baselinestretch}{1.0}
 \def\@evenhead{\hbox to\textwidth{\footnotesize\rm\thepage \hfill
  {\it }}} % authors name
 \def\@oddhead{\hbox to \textwidth{\footnotesize{\it
 } \hfill\thepage}}% abbreviate title
 \renewcommand{\section}{\makeatletter
 \renewcommand{\@seccntformat}[1]{{\csname the##1\endcsname.}\hspace{0.20em}}
 \makeatother \@startsection
{section}%                                            the name
{1}%                                                  the level
{0pt}%                                                the indent
{0.25\baselineskip}%                                  the beforeskip
{0.25\baselineskip}%                                  the afterskip
{\normalsize\bfseries\mathversion{bold}}}
\renewcommand{\subsection}{\makeatletter
\renewcommand{\@seccntformat}[1]{{\csname the##1\endcsname.}\hspace{0.20em}}
\makeatother \@startsection
{subsection}%                                         the name
{1}%                                                  the level
{0pt}%                                                the indent
{0.125\baselineskip}%                                  the beforeskip
{0.125\baselineskip}%                                  the afterskip
{\normalsize\bfseries\mathversion{bold}}}
\newcommand\ack{\section*{Acknowledgement}}
\newtheorem{thm}{\noindent Theorem}[section]
\newtheorem{lem}{\noindent Lemma}[section]
\newtheorem{cor}{\noindent Corollary}[section]
\newtheorem{prop}{\noindent Proposition}[section]
\newtheorem{defn}{\noindent Definition}[section]}
\newtheorem{rem}{\noindent Remark}[section]}
 \def\beqlb{\begin{eqnarray}}\def\eeqlb{\end{eqnarray}}
 \def\beqnn{\begin{eqnarray*}}\def\eeqnn{\end{eqnarray*}}
 \numberwithin{equation}{section}
 \def\d{\mathrm{d}}\def\e{\mathrm{e}}
\def
\def\C{{\mathbb C}}
\def\qed{\hfill$\square$\smallskip}
\def\E{{\mathbb{E}}}
\def\P{{\mathbb{P}}}
\def\l{\langle}
\def\r{\rangle}
\def\proof{\noindent{\it Proof}}
\begin{document}
\title{\bf Multivariate Operator-Self-Similar Random Fields}
\author{
Yuqiang Li \thanks{Research is supported by the National Natural
Science Foundation of China (No: 10901054) }
\\ \small East China Normal
University,
\\ \\ Yimin Xiao
\thanks{Research of Y. Xiao is partially
supported by the NSF grant DMS-1006903. }
\\ \small Michigan State University }
%\date{ }

\maketitle

\begin{abstract}
\noindent
Multivariate random fields whose distributions are invariant
under operator-scalings in both time-domain and state space are
studied. Such random fields are called operator-self-similar
random fields and their scaling operators are characterized.
Two classes of operator-self-similar stable random fields
$X=\{X(t), t \in \R^d\}$  with values in $\R^m$
are constructed by utilizing homogeneous functions and stochastic
integral representations.

\medskip
\noindent{\bf Keywords:} Random fields, operator-self-similarity,
anisotropy, Gaussian random fields, stable random fields,
stochastic integral representation.
\end{abstract}

\vspace{2mm}

\renewcommand{\baselinestretch}{1.1}

\section{Introduction}

A self-similar process $X=\{X(t), t\in \R\}$ is a stochastic
process whose finite-dimensional distributions are invariant under
suitable scaling of the time-variable $t$ and the corresponding
$X(t)$ in the state space. It was first studied rigorously by
Lamperti \cite{L62} under the name ``semi-stable" process. Recall
that an ${\mathbb R}^m$-valued process $X$ is called self-similar
if it is stochastically continuous (i.e. continuous in probability
at each $t \in \R$) and for every constant $r>0$, there exist a positive
number $b(r)$ and a vector $a(r)\in{\mathbb R}^m$ such that
\beqlb\label{s1-1}
\{X(rt), t \in \R\}\stackrel{d}{=}\{b(r)X(t)+a(r), t \in \R\},
\eeqlb
where $\stackrel{d}{=}$ means equality of all finite-dimensional
distributions. Lamperti \cite{L62} showed that if $X$ is \emph{proper}
(see below for the definition) then $b(r) = r^H$ for some $H
\ge 0$, which is called the self-similarity index or the Hurst
index in the literature.

Self-similar processes have been under extensive investigations
during the past four decades due to their theoretical
importance (e.g. they often arise in functional limit theorems)
and their applications as stochastic models in a wide range of
scientific areas including physics, engineering, biology,
insurance risk theory, economics, mathematical finance, just to
mention a few.

The notion of self-similarity has been extended in two ways.
The first extension is to allow scaling in the state space
${\mathbb R}^m$ by linear operators (namely, $b(r)$ in (\ref{s1-1})
is allowed to be a linear operator on ${\mathbb R}^m$) and the corresponding
processes are called operator-self-similar processes in the literature.
More specifically, Laha and Rohatgi \cite{LR82} first extended Lamperti's
notion of self-similarity by allowing $b(r)$ in (\ref{s1-1}) to be in the
set of nonsingular positive-definite self-adjoint linear operators on $\R^m$.
Hudson and Mason \cite{HM82} subsequently allowed $b(r)$ to be an arbitrary
linear operator on $\R^m$. The operator-self-similarity defined by Sato
\cite{S91} has an additional assumption that $a(r)\equiv 0$ in (\ref{s1-1}).
Thus the operator-self-similarity in the sense of Sato \cite{S91} is stronger
than that in Hudson and Mason \cite{HM82}. Various examples of operator-self-similar
Gaussian and non-Gaussian processes have been constructed and studied by
Hudson and Mason \cite{HM82}, Sato \cite{S91}, Maejima and Mason \cite{MM94},
Mason and Xiao \cite{MX00}, Didier and Pipiras \cite{DP09}. The aforementioned
extensions to operator-self-similarity is useful for establishing functional
limit theorems for multivariate time series and their statistical inference
\cite{Robinson}.

The second extension is for random fields (i.e., multi-parameter
stochastic processes) which is to allow scaling by linear operators
on the multiparameter``time''-variable $t \in {\mathbb R}^d$. This was
done by Bierm\'e, Meerschaert and Scheffler \cite{BMS07}. In their
terminology, a real-valued random field $X = \{X(t),\, t \in \R^d\}$ is
called operator-scaling if there
exist a linear operator $E$ on $\R^d$ with positive real parts of the
eigenvalues and some constant $\beta > 0$ such that for all constant $r>0$,
\begin{equation}\label{Eq:OPSS2}
\big\{ X(r^E\,t),\, t \in \R^d\big\} \stackrel{d}{=}
\big\{r^{\beta}\,X(t),\, t \in \R^d\big\}.
\end{equation}
In the above and in the sequel, $r^E$ is the linear operator on $\R^d$
defined by $r^E = \sum_{n=0}^\infty \frac{(\ln r)^n E^n} {n!}.$
%The linear operator
%$E$ is called a self-similarity exponent [which may not be unique].
A typical example of Gaussian random fields satisfying
(\ref{Eq:OPSS2}) is fractional Brownian sheets introduced by
Kamont \cite{Kamont96} and other examples have been constructed in
\cite{BMS07, Xiao09a}. We mention that (\ref{Eq:OPSS2}) leads to
\emph{anisotropy in the ``time''-variable $t$}, which is a distinct
property from those of one-parameter processes. Several authors have
proposed to apply such random fields for modeling phenomena in
spatial statistics, stochastic hydrology and imaging processing
(see \cite{BE03,BMB06,DH99}).

In this paper, we further extend the notions of operator-self-similarity
and operator-scaling to multivariate random fields by
combining the aforementioned two approaches. That is, we will
allow scaling of the random field in both ``time''-domain and
state space by linear operators. This is mainly motivated by the
increasing interest in \emph{multivariate random field models} in
spatial statistics as well as in applied areas such as
environmental, agricultural, and ecological sciences, where multivariate
measurements are performed routinely. See
Wackernagel \cite{Wack98}, Chil\'es and Delfiner \cite{CD99} and
their combined references for further information. We also believe
that the random field models constructed by Zhang \cite{Zhang07},
Gneiting, Kleiber and Schlather \cite{GKS09}, Apanasovich and Genton
\cite{AG10} are locally operator-self-similar and their tangent fields are
operator-self-similar in the sense of Definition \ref{Def:OSSF}
below. This problem will be investigated in a subsequent paper.

%Sato \cite{S91} noted the
%assumption of fullness in \cite{HM82} can be weakened and introduced
%two notions of operator-self-similarity as follows: A ${\mathbb
%R}^d$-valued process $X$ is called operator-self-similar if for
%every $a>0$ there is invertible linear operator $A(a)$ on ${\mathbb
%R}^d$ such that $\{X(at)\}\stackrel{d}{=}\{A(a)X(t)\}$. If there is
%a function $b_a(t)$ on $[0,\,\infty)$ to ${\mathbb R}^d$ such that
%$\{X(at)\}\stackrel{d}{=}\{A(a)X(t)+b_a(t)\}$, then $X$ is called
%wide operator-self-similar. Furthermore, Maejima and Sato
%\cite{MS99} and Saigo and Tamura \cite{ST06} weakened the limitation
%on time scaling factor and defined the so-called wide sense
%semi-self-similar processes and wide sense operator
%semi-self-similar processes, respectively. In addition, there are
%some literature devoting to the construction of
%operator-self-similar processes. For example, Maejima and Mason
%\cite{MM94} constructed several examples of operator-self-similar
%processes and some limit theorems were proved; Mason and Xiao
%\cite{MX00} constructed a class of operator-self-similar Gaussian
%random fields (operator fractional Brownian motions) and studied their
%sample paths properties.

Throughout this paper, let $X=\{X(t), t\in{\mathbb R}^d\}$ be a random
field with values in ${\mathbb R}^m$, where $d\ge 2$ and $m\ge 2$ are
fixed integers. In the probability literature, $\R^d$ is often referred
to as the ``time''-domain (or parameter space), $\R^m$ as the state space
and $X$ a $(d, m)$-random field. We will be careful not to confuse the
terminology with the space-time random fields in geostatistics.

The following definition is a natural extension of the wide-sense
operator-self-similarity and operator-self-similarity in Sato \cite{S91}
for one-parameter processes to $(d, m)$-random fields.

\begin{defn}\label{Def:OSSF}
{\it Let $E$ be a $d\times d$ matrix whose eigenvalues have positive
real parts. A $(d, m)$-random field $X=\{X(t), t\in{\mathbb R}^d\}$
is called wide-sense operator-self-similar (w.o.s.s.) with time-variable
scaling exponent $E$, if for any constant $r>0$ there exist an $m\times m$
matrix $B(r)$ (which is called a state space scaling operator) and a
function $a_r(\cdot): {\mathbb R}^d \to
{\mathbb R}^m$ (both $B(r)$ and $a_r(\cdot)$ are non-random) such that
\beqlb\label{s1-2}
\big\{X(r^Et), \, t \in \R^d\big\}\stackrel{d}{=}
\big\{B(r)X(t)+a_r(t), t \in \R^d\big\}.
\eeqlb
If, in addition, $a_r(t)\equiv 0$, then $X$ is called operator-self-similar
(o.s.s.) with scaling exponent $E$.
}
\end{defn}

\begin{rem}
Here are some remarks about Definition \ref{Def:OSSF}.
\begin{itemize}
\item[(i)]\,
If a random field $X$ is w.o.s.s., then the consistency in \eqref{s1-2}
implies
\begin{equation}\label{Eq:Con1}
B(r_1r_2)= B(r_1) B(r_2) = B(r_2) B(r_1), \qquad \forall\, r_1, r_2 > 0
\end{equation}
and for all $r_1, r_2 > 0$ and $t \in \R^d$,
\begin{equation}\label{Eq:Con2}
\begin{split}
a_{r_1r_2}(t) = B(r_1) a_{r_2}(t) + a_{r_1}(r_2^Et)
= B(r_2) a_{r_1}(t) + a_{r_2}(r_1^E t).
\end{split}
\end{equation}

\item[(ii)]\, One can also define operator-self-similarity for random
fields by extending the analogous notion in Hudson and Mason
\cite{HM82}. Namely, we say that a $(d, m)$-random field $X=\{X(t), t\in
{\mathbb R}^d\}$ is operator-self-similar (o.s.s.) in
the sense of Hudson and Mason with time-variable
scaling exponent $E$, if for any constant $r>0$ there exist an $m\times m$
matrix $B(r)$ and a vector $a(r)\in {\mathbb R}^m$
such that
\beqlb\label{s1-HM}
\big\{X(r^Et), \, t \in \R^d\big\}\stackrel{d}{=}
\big\{B(r)X(t)+ a(r), t \in \R^d\big\}.
\eeqlb
Since the function $a(r)$ does not depend on $t \in \R^d$, (\ref{s1-HM}) is
stronger than w.o.s.s. in Definition \ref{Def:OSSF}, but is weaker than the
operator-self-similarity.
\end{itemize}
\end{rem}

Recall that a probability measure $\mu$ on $\R^m$ is \emph{full}
if its support is not contained in any proper hyperplane in $\R^m$.
We say that a $(d, m)$-random field $X=\{X(t),\, t\in\R^d\}$ is
\emph{proper} if for each $t\not=0$, the distribution of $X(t)$
is full. Then one can verify (see e.g. \cite[p.282]{HM82}) that for a
proper w.o.s.s. random field, its space-scaling operator $B(r)$ must be
nonsingular for all $r>0$.

We remark that proper w.o.s.s. random fields are special cases of
\emph{group self-similar processes} introduced by Kolody\'{n}ski
and Rosi\'{n}ski \cite{KR03} and can be studied by using their
general framework. To recall their definition, let $G$ be a group
of transformations of a set $T$ and, for each $(g, t) \in G\times T$,
let $C(g, t): \R^m\to \R^m$ be a bijection such that
\[
C(g_1g_2, t) = C(g_1, g_2(t))\circ C(g_2, t), \qquad \forall g_1, g_2 \in G
\hbox{ and } t \in T,
\]
and $C(e, t)= I$. Here $e$ is the unit element of $G$ and $I$ is the identity
operator on $\R^m$. In other words, $C$ is a cocycle for the group action
$(g, t)\mapsto g(t)$ of $G$ on $T$. According to Kolody\'{n}ski
and Rosi\'{n}ski \cite{KR03}, a stochastic process $\{X(t), t \in T\}$
taking values in $\R^m$ is called \emph{$G$-self-similar with cocycle $C$} if
\begin{equation}\label{Eq:GSS1}
\{X\big(g(t)\big),\, t\in T\}\stackrel{d}{=}\{C(g, t)X(t),\, t\in T\}.
\end{equation}
Now we take $T = \R^d$ and $G = \{r^E: r >0\}$ which is a subgroup of invertible
linear operators on $\R^d$. It is clear that if a proper $(d, m)$-random field
$X= \{X(t), t \in \R^d\}$ is w.o.s.s. in the sense of Definition
\ref{Def:OSSF}, then it is $G$-self-similar with cocycle $C$, where for each
$g= r^E \in G$ and $t \in \R^d$, $C(g,t): \R^m \to \R^m$ is defined by $C(g, t)(w)
= B(r)w + a_r(t)$. Note that $C(g,t)$ is a bijection since $X$ is proper; and it
is a cocycle because of (\ref{Eq:Con1}) and  (\ref{Eq:Con2}).

In \cite{KR03}, Kolody\'{n}ski and Rosi\'{n}ski consider a strictly stable
process $X= \{X(t), t \in T\}$ with values in $\R^m$ which is $G$-self-similar
with cocycle $C$ and characterize the minimal spectral representation of $X$
(which is a kind of stochastic integral representation and always exists for
strictly stable processes) in terms of a nonsingular action $L$  of $G$ on
a measure space $(S, {\cal B}(S), \mu)$, where $S$ is a Borel subset of a Polish
space equipped with its Borel $\sigma$-algebra ${\cal B}(S)$ and $\mu$ is a
$\sigma$-finite measure, and a cocycle $c: G\times S \to \{-1, 1\}$ relative
to $L$ (see Section 3 of \cite{KR03} for details). They also construct strictly
stable processes which are $G$-self-similar with cocycle $C$ by using nonsingular
actions $L$ of $G$ on $S$ and $\{-1, 1\}$-valued cocycle $c$ relative to $L$ (see
Section 4 of \cite{KR03}). Their general framework provides a
unified treatment for stochastic processes with various invariance properties
(such as stationarity, isotropy, and self-similarity) and is particularly powerful
when combined with methods from ergodic theory to study probabilistic and statistical
properties of $G$-self-similar strictly stable processes. See, Rosi\'{n}ski
\cite{Ros95, Ros00}, Roy and  Samorodnitsky \cite{RS08} and  Samorodnitsky
\cite{Sam04} for recent results  on stationary stable processes and random fields.
It would be very interesting to pursue further this line of research for o.s.s.
or more general $G$-self-similar stable random fields.

The main objective of the present paper is to characterize the permissible
forms for the state space scaling operator (or simply the space-scaling
operator) $B(r)$, which provides corresponding information on the cocycle
$C(g, t)$. We will also construct two types of proper o.s.s. symmetric
$\alpha$-stable $(d, m)$-random fields by using stochastic integrals of
matrix-valued deterministic functions with respect to vector-valued symmetric
$\alpha$-stable (S$\alpha$S) random measures. Our construction method is
somewhat different and less general than that of Kolody\'{n}ski and
Rosi\'{n}ski \cite{KR03} who use stochastic integrals of real-valued
deterministic functions with respect to a real-valued strictly stable
random measure and who only require their deterministic integrands to
satisfy certain recurrence equation involving a non-singular action
$L$ of $G$ on $S$ and a cocycle $c: G\times S \to \{-1, 1\}$ relative
to $L$. See Proposition 4.1 in \cite{KR03} for details. The deterministic
integrands in our
constructions are given in terms of $\Theta$-homogeneous functions (see
Definition 2.6 in \cite{BMS07} or Section 2 below). Hence the resulting
o.s.s. stable $(d, m)$-random fields in this paper are natural multivariate
extensions of the familiar linear and harmonizable fractional stable fields.
To explore the connections between these o.s.s. stable random fields and
the $G$-self-similar stable random fields in Proposition 4.1 of Kolody\'{n}ski
and Rosi\'{n}ski \cite{KR03}, we determine the non-singular action $L$ of $G =
\{r^E, r > 0\}$ on the measure space $(\R^d, {\cal B}(\R^d), \lambda_d)$
and the cocycle $c: G\times \R^d \to \{-1, 1\}$
relative to $L$ for the o.s.s. S$\alpha$S random fields constructed in
Theorems  \ref{s5-t1} and \ref{s5-t2}. These preliminary results may be
helpful for applying the
powerful tools developed in Rosi\'{n}ski \cite{Ros95, Ros00} to study
operator-self-similar S$\alpha$S random fields.

The rest of this paper is divided into three sections. In Section 2, we
provide some preliminaries and state the main results of this paper. Theorem
\ref{s2-t1} proves that, under some standard conditions, the space-scaling
operator $B(r)$ in (\ref{s1-2}) must be of the form $B(r)= r^D$ for some $D
\in M(\R^m)$, which will be called the state space scaling exponent (or the
space-scaling exponent) of $X$. Theorem \ref{s2-t2} is an analogous result for
o.s.s. random fields. Theorems \ref{s5-t1} and \ref{s5-t2} provide
general ways for constructing proper moving-average-type and harmonizable-type
o.s.s. stable $(d, m)$-random fields with prescribed operator-self-similarity
exponents. We also describe the connection between these random fields and the
$G$-self-similar stable random fields in \cite{KR03}. In Section 3 we characterize
the forms of the space-scaling operators and prove Theorems \ref{s2-t1} and
\ref{s2-t2}. The proofs of Theorem \ref{s5-t1} and \ref{s5-t2} are given in
Section 4. It will be clear that the arguments in Hudson and Mason \cite{HM82},
Maejima and Mason \cite{MM94} and Bierm\'e, Meerschaert and Scheffler
\cite{BMS07} play important roles throughout this paper.

We end this section with some notation. For any integer $n\geq 1$, we use
$\lambda_n$ to denote the Lebesgue measure on $\R^n$ and $\mathcal{B}(\R^n)$
the Borel algebra. The Euclidean norm and inner product in  $\R^n$ are denoted
by $|\cdot\,|$ and $\langle \cdot, \cdot\rangle$, respectively. Let End$(\R^n)$
be the set of all linear operators on $\R^n$ or, equivalently, $n\times n$
matrices. The set of invertible linear operators in End($\R^n$) is denoted by
Aut$(\R^n)$. Let $Q(\R^n)$ be the set of $A\in \text{Aut}(\R^n)$ such that all
eigenvalues of $A$ have positive real parts. Let $M(\R^n)$ be the set of $A\in
\text{End}(\R^n)$ such that all eigenvalues of $A$ have nonnegative real parts and
every eigenvalue of $A$ with real part equal to zero (if it exists) is a simple
root of the minimal polynomial of $A$.

We will use  $C_0, C_1, C_2, \cdots$ to denote unspecified positive finite constants
which may not necessarily be the same in each occurrence.
\bigskip

\section{Main results}

Throughout this paper, $E\in Q(\R^d)$ is a fixed $d\times d$ matrix.
$E^*$ is the adjoint of $E$; and $\alpha\in(0, 2]$ is a constant.

Our first result characterizes the form of the space-scaling
operator $B(r)$ for a w.o.s.s. random field.

\begin{thm}\label{s2-t1}
Let $X=\{X(t), t\in\R^d\}$ be a stochastically continuous and proper w.o.s.s. random
field with values in $\R^m$ and time-variable scaling exponent $E\in Q(\R^d)$.
There exist a matrix $D\in M(\R^m)$ and a function
$b_r(t): (0,\,\infty)\times\R^d \to \R^m$ which is continuous at every $(r, t)\in
(0,\,\infty)\times\R^d$ such that for all constants $r>0$
\beqlb\label{s2-t1-1}
\big\{X(r^E t),\, t \in \R^d\big\}\stackrel{d}{=}
\big\{r^D X(t)+ b_r(t), \, t \in \R^d\big\}.
\eeqlb
Furthermore, $X(0)= a$  a.s. for some constant
vector  $a \in \R^m$ if and only if $D\in Q(\R^m)$. In this latter case, we define
$b_0(t) \equiv a$ for all $t \in \R^d$, then the function $(r, t)\mapsto b_r(t)$
is continuous on $[0,\,\infty)\times\R^d$.
%, and $\lim\limits_{r \to 0}b_r(t) = a$ uniformly for $t \in \R^d$.
\end{thm}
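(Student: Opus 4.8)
{\it Proof proposal.}

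The plan is to follow the now-standard route pioneered by Hudson and Mason \cite{HM82} and adapted to the random-field setting by Bierm\'e, Meerschaert and Scheffler \cite{BMS07}, combining it with the consistency relations \eqref{Eq:Con1}--\eqref{Eq:Con2}. First I would set up the group structure: by \eqref{Eq:Con1} the family $\{B(r): r>0\}$ is a commuting one-parameter multiplicative family of nonsingular matrices (nonsingularity coming from properness, as noted before the statement). The key analytic input is to show this family is \emph{continuous} in $r$. For this I would fix some $t_0\ne 0$ with $X(t_0)$ full and use stochastic continuity of $X$ together with \eqref{s1-2} to argue that $r\mapsto B(r)X(t_0)+a_r(t_0)$ is continuous in distribution; then a convergence-of-types argument (using fullness of the limiting law) forces both $r\mapsto B(r)$ and $r\mapsto a_r(t_0)$ to be continuous. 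Once $\{B(r)\}$ is a continuous multiplicative one-parameter family in $\mathrm{Aut}(\R^m)$, a classical result (see e.g. \cite{HM82}, or the logarithm argument for one-parameter subgroups of $GL_m$) gives $B(r)=r^D$ for a unique $D\in\mathrm{End}(\R^m)$, and setting $b_r(t):=a_r(t)$ yields \eqref{s2-t1-1}. Joint continuity of $(r,t)\mapsto b_r(t)$ on $(0,\infty)\times\R^d$ then follows from \eqref{Eq:Con2} (which expresses $b_{r}$ in terms of $b$ at nearby arguments) together with stochastic continuity of $X$ and the continuity of $r\mapsto r^D$ and $r\mapsto r^E$.

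The second task is to show $D\in M(\R^m)$, i.e. that eigenvalues of $D$ have nonnegative real parts and any purely imaginary eigenvalue is a simple root of the minimal polynomial. Here I would argue by contradiction in the spirit of \cite[Thm.~8]{HM82}: if $D$ had an eigenvalue with negative real part, or a non-semisimple purely imaginary eigenvalue, then for a suitable vector $\theta\in\R^m$ the quantity $|\,r^{D^*}\theta\,|$ would blow up along a sequence $r_n\to 0$ (or $r_n\to\infty$); feeding this into the characteristic functions of $\langle\theta, X(r_n^E t_0)\rangle$ via \eqref{s2-t1-1} and using that $r_n^E t_0\to 0$ (resp.\ the tightness available along the relevant direction) contradicts the stochastic continuity of $X$ at $0$ or the properness/tightness of the finite-dimensional laws. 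The precise bookkeeping uses the real Jordan form of $D$ and the growth asymptotics of $r^D$ as $r\to0^+$ and $r\to\infty$, exactly as in the cited references.

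For the last assertion, suppose $X(0)=a$ a.s. Plugging $t=0$ into \eqref{s2-t1-1} gives $a\stackrel{d}{=}r^D a+b_r(0)$, hence $b_r(0)=a-r^Da$, and more importantly, replacing $t$ by $r^{-E}t$ and letting $r\to0$: since $r^{-E}t\to 0$ for every fixed $t$ (as $E\in Q(\R^d)$ forces $r^{-E}\to 0$), stochastic continuity yields $X(r^{-E}t)\to a$ in probability, so $r^D X(r^{-E}\cdot\,)$ would have to converge; pushing this through the convergence-of-types machinery with the full law of $X(t)$, $t\ne0$, forces $r^D\to 0$ as $r\to0^+$, which is equivalent to $D\in Q(\R^m)$ (all eigenvalues have strictly positive real part). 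Conversely, if $D\in Q(\R^m)$ then $r^D\to0$ as $r\to0^+$; taking $r\to0$ in \eqref{s2-t1-1} with fixed $t$ shows $X(0)\stackrel{d}{=}\lim_r (r^DX(t)+b_r(t))=\lim_r b_r(t)$, and one checks via \eqref{Eq:Con2} that this limit is a constant $a$ independent of $t$, so $X(0)=a$ a.s. Defining $b_0\equiv a$, joint continuity on $[0,\infty)\times\R^d$ follows by extending the earlier continuity argument to $r=0$, using $r^D\to0$ and $r^E\to$ the zero operator in the appropriate sense.

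The main obstacle I anticipate is the convergence-of-types step establishing continuity of $r\mapsto B(r)$: one must carefully choose enough points $t_0,\dots,t_k$ so that the joint law of $(X(t_0),\dots,X(t_k))$ is full in $\R^{m(k+1)}$ (properness only gives fullness of each marginal $X(t_i)$, not of the joint law), and then apply the multivariate convergence-of-types theorem to pin down both the linear part $B(r)$ and the translation $a_r$ simultaneously. Handling the translations $a_r(t)$ — which, unlike in the one-parameter operator-self-similar case, genuinely depend on $t$ — and verifying their joint continuity via the cocycle identity \eqref{Eq:Con2} is the technically fussiest part, though it is routine once the continuity of $r\mapsto r^D$ and $r\mapsto r^E$ is in hand.

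\qed
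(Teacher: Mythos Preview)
Your proposal has a genuine gap in the crucial first step. You treat $r\mapsto B(r)$ as a well-defined function and aim to show it is a continuous homomorphism $(\R_{>0},\times)\to\mathrm{Aut}(\R^m)$, which would indeed force $B(r)=r^D$. But the w.o.s.s.\ definition only asserts \emph{existence} of some $B(r)$ for each $r$; nothing guarantees uniqueness. If $B_1,B_2$ both satisfy \eqref{s1-2} for the same $r$ (with possibly different translations), then $B_2^{-1}B_1$ lies in the symmetry group of the full law of $X(t_0)$, which may well be nontrivial. Consequently \eqref{Eq:Con1} should really be read as $B(r_1)B(r_2)\in G_{r_1r_2}$, where $G_s$ is the \emph{set} of admissible linear parts at scale $s$, not as an equality of a fixed selection; and your convergence-of-types argument yields only precompactness of $\{B(r_n)\}$ along $r_n\to r$ with limit points in $G_r$, not a continuous selection $r\mapsto B(r)$.

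The paper handles this by working with the group $G=\bigcup_{r>0}G_r\subset\mathrm{Aut}(\R^m)$ and establishing a dichotomy (Lemma~\ref{s3-l2}): either $G_s=G$ for every $s>0$, in which case $X(t)\stackrel{d}{=}X(0)+\phi(t)$ and $D=\mathbf{0}$ works; or the $G_s$ are pairwise disjoint. In the second case $G$ is closed, the map $\eta:G\to\R$ given by $\eta(A)=\ln s$ for $A\in G_s$ is a well-defined continuous homomorphism, and a Lie-group tangent-space argument (exactly as in \cite{HM82}) produces $D\in\mathrm{End}(\R^m)$ with $s^D\in G_s$ for all $s>0$. This is precisely the ingredient your outline skips. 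By contrast, the obstacle you anticipate --- needing joint fullness of $(X(t_0),\dots,X(t_k))$ in $\R^{m(k+1)}$ --- does not actually arise: fullness of a single marginal $X(t_0)$ suffices for the convergence-of-types lemmas the paper uses. Your sketches for $D\in M(\R^m)$ and for the equivalence ``$X(0)=a$ a.s.\ $\Leftrightarrow D\in Q(\R^m)$'' are in the right spirit and close to the paper's arguments via symmetrization, characteristic functions, and Lemma~\ref{s2-l2}.
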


The operator $D$ will be called the state space scaling exponent (or space-scaling
exponent). For a given time-variable scaling exponent $E\in Q(\R^d)$, the corresponding
exponent $D$ may not be unique. In order to emphasize the roles of the linear operators
$E$ and $D$, we call $X$ w.o.s.s. with exponents ($E, D$), or simply ($E, D$)-w.o.s.s.
By combining Theorem \ref{s2-t1} with Lemma 2.4 in \cite{S91}, we derive readily the
following corollary. Of course, (\ref{Eq:Con3}) also follows from (\ref{Eq:Con2}).

\begin{cor}\label{s2-c1}
Under the conditions of Theorem \ref{s2-t1}, the function ${b}_r(t)$ is uniquely
determined by $E$ and $D$. Furthermore,
\begin{equation}\label{Eq:Con3}
b_{r_1r_2}(t)= {b}_{r_1}(r_2^Et)+r_1^D {b}_{r_2}(t)={b}_{r_2}
(r_1^Et)+r_2^D {b}_{r_1}(t)
\end{equation}
for all $r_1, r_2 > 0$ and $t \in \R^d$.
\end{cor}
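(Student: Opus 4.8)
The plan is to combine the representation \eqref{s2-t1-1} from Theorem~\ref{s2-t1} with the consistency relation \eqref{Eq:Con2} for the w.o.s.s. structure, and then invoke Lemma~2.4 of \cite{S91} to upgrade the multiplicative identity into the cocycle-type identity \eqref{Eq:Con3}. First I would argue uniqueness of $b_r(t)$. Since $X$ is proper, $X(t)$ has a full distribution for every $t\neq 0$; if $r^D X(t)+b_r(t)$ and $r^D X(t)+\tilde b_r(t)$ had the same finite-dimensional distributions, then subtracting (using that $r^D$ is the same nonsingular operator in both) forces the constant vector $b_r(t)-\tilde b_r(t)$ to lie in every hyperplane supporting the law of $X(t)$, hence equals $0$ for $t\neq 0$; for $t=0$ one uses continuity in $(r,t)$, or the fact established in Theorem~\ref{s2-t1} that $b_r(0)$ is pinned down (it is $0$ when $D\in Q(\R^m)$, and in general is determined by the limiting behaviour). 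Thus $b_r(t)$ is the unique function making \eqref{s2-t1-1} hold once $E$ and $D$ are fixed.

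Next I would derive \eqref{Eq:Con3}. Applying \eqref{s2-t1-1} twice: on one hand $\{X((r_1r_2)^E t)\}\stackrel{d}{=}\{(r_1r_2)^D X(t)+b_{r_1r_2}(t)\}$; on the other hand, writing $(r_1r_2)^E = r_1^E r_2^E$ (valid since $r^E$ is a one-parameter group) and applying \eqref{s2-t1-1} first with $r_1$ at the point $r_2^E t$ and then with $r_2$,
\[
\{X(r_1^E(r_2^E t))\}\stackrel{d}{=}\{r_1^D X(r_2^E t)+b_{r_1}(r_2^E t)\}
\stackrel{d}{=}\{r_1^D(r_2^D X(t)+b_{r_2}(t))+b_{r_1}(r_2^E t)\}.
\]
Since $(r_1r_2)^D=r_1^D r_2^D$, comparing the two right-hand sides and using the uniqueness just established gives $b_{r_1r_2}(t)=r_1^D b_{r_2}(t)+b_{r_1}(r_2^E t)$. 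Reversing the roles of $r_1$ and $r_2$ (equivalently using commutativity $r_1^E r_2^E=r_2^E r_1^E$) yields the symmetric expression, which is exactly \eqref{Eq:Con3}. This already matches what \eqref{Eq:Con2} predicts, since $b_r(t)$ plays the role of $a_r(t)$ with the canonical choice $B(r)=r^D$; the cited Lemma~2.4 of \cite{S91} is what guarantees that the exponent $D$ can indeed be taken so that $B(r)=r^D$ is consistent with the group structure, so that the substitution of $r^D$ for $B(r)$ throughout is legitimate.

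The main obstacle is the case $t=0$ (and, relatedly, making the uniqueness argument airtight there): properness only gives fullness of the law of $X(t)$ for $t\neq 0$, so the hyperplane argument for uniqueness of $b_r(0)$ does not apply directly. Here I would lean on the continuity statement in Theorem~\ref{s2-t1}: the function $(r,t)\mapsto b_r(t)$ is continuous on $(0,\infty)\times\R^d$ (and on $[0,\infty)\times\R^d$ when $D\in Q(\R^m)$), so $b_r(0)=\lim_{t\to 0}b_r(t)$ is determined by its values at $t\neq 0$, where uniqueness already holds; then \eqref{Eq:Con3} at $t=0$ follows by passing to the limit in the identity for $t\neq 0$, using continuity of $t\mapsto r_2^E t$ and of $r^D$. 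A minor point to check is that the almost-sure equalities coming from $\stackrel{d}{=}$ can be turned into the deterministic identity \eqref{Eq:Con3}: since $b_{r_1r_2}(t)$, $r_1^D b_{r_2}(t)+b_{r_1}(r_2^E t)$ are non-random vectors and they are the (unique) centering constants of one and the same distribution, they must coincide, so no issue with null sets arises.
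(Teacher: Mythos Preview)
Your approach is correct and matches the paper's: uniqueness of $b_r(t)$ via Lemma~2.4 of \cite{S91}, and the cocycle identity \eqref{Eq:Con3} by applying \eqref{s2-t1-1} twice (equivalently, specializing \eqref{Eq:Con2} to $B(r)=r^D$). Two small corrections: first, you have the role of Lemma~2.4 in \cite{S91} backwards---it is the uniqueness-of-centering statement (essentially your ``hyperplane'' step), not a statement about the group structure of $r^D$; the latter is already the content of Theorem~\ref{s2-t1}. Second, your worry about $t=0$ is unnecessary: for \emph{any} probability measure $\mu$ on $\R^m$, $\mu*\delta_c=\mu$ forces $c=0$ (compare characteristic functions near the origin), so the shift in $r^D X(0)+b_r(0)$ is uniquely determined regardless of whether $X(0)$ is full.
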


The following corollary expresses the function $b_r(t)$ in terms of a
function of $t$ and the scaling exponents $E$ and $D$.
\begin{cor}\label{s2-c2}
Under the conditions of Theorem \ref{s2-t1}, there exists a continuous function
$b(\cdot): \R^d \backslash \{0\} \to \R^m$ such that the function $b_r(t)$
satisfies
\begin{equation}\label{Eq:br}
b_r(t) = b(r^Et) - r^D b(t), \qquad \forall r > 0 \ \hbox{ and } t \in \R^d
\backslash \{0\}.
\end{equation}
If, in addition, $D \in Q(\R^m)$ and $X(0) = a$ a.s., where $a \in \R^m$ is a
constant vector, then we can extend the definition of $b(\cdot)$ to $\R^d$ by
defining $b(0) = a$ such that (\ref{Eq:br}) holds for all $r > 0$ and $t \in \R^d$.
\end{cor}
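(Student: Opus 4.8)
The plan is to realize $b_r(t)$ as a coboundary for the action $(r,t)\mapsto r^Et$ of the multiplicative group $(0,\infty)$ on $\R^d$; this is possible because, away from the origin, that action admits a global continuous cross-section. First I would recall the polar decomposition attached to $E\in Q(\R^d)$ (see Section~2 of \cite{BMS07}): there is a continuous function $\tau:\R^d\to[0,\infty)$ with $\tau(x)=0$ if and only if $x=0$ and $\tau(r^Ex)=r\,\tau(x)$ for all $r>0$, $x\in\R^d$, such that the set $S_0:=\{x\in\R^d:\tau(x)=1\}$ is compact and the map $(r,\theta)\mapsto r^E\theta$ is a homeomorphism of $(0,\infty)\times S_0$ onto $\R^d\setminus\{0\}$. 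Hence every $t\in\R^d\setminus\{0\}$ has a unique representation $t=\tau(t)^E\,\ell(t)$ with $\ell(t)\in S_0$, where $\tau$ and $\ell$ are continuous on $\R^d\setminus\{0\}$; applying $r^E$ and using uniqueness gives $\tau(r^Et)=r\,\tau(t)$ and $\ell(r^Et)=\ell(t)$.

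Next I would define
\[
b(t):=b_{\tau(t)}\big(\ell(t)\big),\qquad t\in\R^d\setminus\{0\}.
\]
Continuity of $b$ on $\R^d\setminus\{0\}$ is immediate from the continuity of $\tau$ and $\ell$ together with the joint continuity of $(r,t)\mapsto b_r(t)$ on $(0,\infty)\times\R^d$ provided by Theorem~\ref{s2-t1}. To verify \eqref{Eq:br}, fix $r>0$ and $t\in\R^d\setminus\{0\}$ and apply the cocycle identity \eqref{Eq:Con3} of Corollary~\ref{s2-c1} with $r_1=r$ and $r_2=\tau(t)$, evaluated at the point $\ell(t)$: since $\tau(t)^E\ell(t)=t$ and $b_{\tau(t)}(\ell(t))=b(t)$,
\[
b_{r\tau(t)}\big(\ell(t)\big)=b_r\big(\tau(t)^E\ell(t)\big)+r^D b_{\tau(t)}\big(\ell(t)\big)=b_r(t)+r^D b(t).
\]
Because $\tau(r^Et)=r\,\tau(t)$ and $\ell(r^Et)=\ell(t)$, the left-hand side equals $b_{\tau(r^Et)}(\ell(r^Et))=b(r^Et)$, and therefore $b_r(t)=b(r^Et)-r^D b(t)$, which is \eqref{Eq:br}.

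For the final assertion, assume $D\in Q(\R^m)$ and $X(0)=a$ a.s., and set $b(0):=a$. Evaluating the equality in distribution \eqref{s2-t1-1} at $t=0$ forces $a=r^Da+b_r(0)$ (both sides being deterministic), so $b_r(0)=a-r^Da=b(0)-r^Db(0)=b(r^E0)-r^Db(0)$; hence \eqref{Eq:br} also holds at $t=0$. It remains to check that the extended $b$ is continuous at the origin, which is the one point that needs a little care: as $t\to0$ we have $\tau(t)\to0$ while $\ell(t)$ stays in the compact set $S_0$, so, using $b_0\equiv a$ and the continuity of $(r,\theta)\mapsto b_r(\theta)$ on $[0,\infty)\times\R^d$ (again by Theorem~\ref{s2-t1}), the compactness of $S_0$ yields $b(t)=b_{\tau(t)}(\ell(t))\to a=b(0)$. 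I expect the construction of the cross-section and the algebraic manipulation with \eqref{Eq:Con3} to be routine; this last argument on the compact set $\{0\}\times S_0$ is the only genuinely delicate step.
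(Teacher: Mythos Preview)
Your argument is correct and follows essentially the same route as the paper: define $b(t)=b_{\tau(t)}(\ell(t))$ via the $E$-polar decomposition and apply the cocycle identity \eqref{Eq:Con3} with $r_1=r$, $r_2=\tau(t)$ at the point $\ell(t)$. Your treatment is slightly more detailed than the paper's (you spell out $\tau(r^Et)=r\tau(t)$, $\ell(r^Et)=\ell(t)$, and even argue continuity of the extension at $0$, which the statement does not actually assert), but the method is the same.
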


The proof of this corollary is based on the polar coordinate representation
of $t \in \R^d\backslash \{0\}$ under operator $E$ given in \cite[p.317]{BMS07}
(the definition is recalled below) and will be given in Section 3.

The next theorem is an analogue of Theorem \ref{s2-t1} for o.s.s. random fields.

\begin{thm}\label{s2-t2}
Let $X=\{X(t), t\in\R^d\}$ be a stochastically continuous and proper  random
field with values in $\R^m$.
\begin{itemize}
\item[(i)]\, If $X$ is o.s.s. with time-variable scaling exponent $E\in Q(\R^d)$,
then there exists a matrix $D \in M(\R^m)$ such that for all $r>0$
\beqlb\label{s2-t2-2}
\big\{X(r^E t),\, t \in \R^d\big\}\stackrel{d}{=}
\big\{r^D X(t), \, t \in \R^d\big\}.
\eeqlb
Moreover, $D \in Q(\R^m)$ if and only if $X(0) = 0$ a.s.

\item[(ii)] If $X$ is o.s.s. with time-variable scaling exponent $E\in Q(\R^d)$
in the sense of Hudson and Mason, then there exist a matrix $D \in M(\R^m)$ and
a continuous function $b(r): (0,\,\infty) \to \R^m$  such that for all constants $r>0$
\beqlb\label{s2-t2-3}
\big\{X(r^E t),\, t \in \R^d\big\}\stackrel{d}{=}
\big\{r^D X(t)+ b(r), \, t \in \R^d\big\}.
\eeqlb
\end{itemize}
\end{thm}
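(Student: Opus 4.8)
The plan is to mimic the proof of Theorem \ref{s2-t1}, but to replace the affine symmetry group used there by its translation-free version for part (i) and by its constant-translation version for part (ii). In each case the extra o.s.s.\ hypothesis guarantees that this smaller group already projects \emph{onto} the scaling parameter, so that the one-parameter subgroup of exponents can be selected inside it, and hence no $t$-dependent (resp.\ no $t$-dependent and no non-constant) compensating term survives. Throughout, $B(r)$ denotes the space-scaling operator furnished by the hypothesis; it is nonsingular because $X$ is proper.

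For part (i) I would work with
\[
\mathcal{S}:=\big\{(r,A)\in(0,\infty)\times\mathrm{Aut}(\R^m):\ \{X(r^Et),\,t\in\R^d\}\stackrel{d}{=}\{AX(t),\,t\in\R^d\}\big\}.
\]
Using the group law of $\mathrm{Aut}(\R^m)$ and the index change $t\mapsto r^{-E}t$, one checks $\mathcal{S}$ is a subgroup of the Lie group $(0,\infty)\times\mathrm{Aut}(\R^m)$. It is closed: if $(r_n,A_n)\to(r,A)$ there, then $r_n^E\to r^E$, so by stochastic continuity of $X$, $X(r_n^Et_j)\to X(r^Et_j)$ in probability, while $A_nX(t_j)\to AX(t_j)$, whence the finite-dimensional distributions match in the limit and $(r,A)\in\mathcal{S}$. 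The projection $(r,A)\mapsto r$ is onto $(0,\infty)$, witnessed by $(r,B(r))$, and its kernel is $\{1\}\times G_X$ with $G_X=\{A\in\mathrm{Aut}(\R^m):\{AX(t),t\}\stackrel{d}{=}\{X(t),t\}\}$. Fixing any $t_0\neq0$, $G_X$ embeds in $\{A:AX(t_0)\stackrel{d}{=}X(t_0)\}$, which is closed and bounded (a subsequence with $|A_n|\to\infty$ would, after normalization, make $A_nX(t_0)$ leave every compact set while being equidistributed with $X(t_0)$) and whose elements are nonsingular because $\mathcal{L}(X(t_0))$ is full; so $G_X$ is compact. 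Thus $\mathcal{S}$ is a closed subgroup of a Lie group surjecting onto $(0,\infty)$ with compact kernel, and the exponent-selection argument of Hudson and Mason \cite{HM82} used in the proof of Theorem \ref{s2-t1}, applied inside $\mathcal{S}$ rather than its affine analogue, yields a one-parameter subgroup $\{(r,r^D):r>0\}\subseteq\mathcal{S}$ with $D\in M(\R^m)$. This is exactly \eqref{s2-t2-2}.

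For the ``moreover'' part of (i), evaluate \eqref{s2-t2-2} at a fixed $t_0\neq0$: since $r^Et_0\to0$ as $r\downarrow0$ (because $E\in Q(\R^d)$), stochastic continuity gives $r^DX(t_0)\stackrel{d}{=}X(r^Et_0)\to X(0)$. If $D\in Q(\R^m)$ then $r^D\to0$ as $r\downarrow0$, so $X(0)=0$ a.s.; conversely, if $X(0)=0$ a.s.\ then $r^DX(t_0)\to0$ in law, and since $\mathcal{L}(X(t_0))$ is full this forces $(r^D)^{*}\theta\to0$ for every $\theta$, i.e.\ $r^D\to0$ as $r\downarrow0$, which is equivalent to all eigenvalues of $D$ having positive real part, i.e.\ $D\in Q(\R^m)$. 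For part (ii), repeat the construction with the affine symmetry group
\[
\mathcal{S}_{HM}:=\big\{(r,A,v)\in(0,\infty)\times\mathrm{Aut}(\R^m)\times\R^m:\ \{X(r^Et),t\}\stackrel{d}{=}\{AX(t)+v,t\}\big\},
\]
taken with the affine composition law on the last two coordinates. As before it is a closed subgroup of a Lie group, it surjects onto $(0,\infty)$ via $(r,B(r),a(r))$, and its kernel over $r=1$ is compact: the linear parts lie in the compact group found above, and then $AX(t_0)+v\stackrel{d}{=}X(t_0)$ with $A$ bounded forces $v$ bounded. Selecting a one-parameter subgroup $\{(r,r^D,b(r)):r>0\}\subseteq\mathcal{S}_{HM}$ with $D\in M(\R^m)$ and $b$ continuous --- its being a homomorphism gives $b(r_1r_2)=r_1^Db(r_2)+b(r_1)$, which is \eqref{Eq:Con2} specialized to constants --- yields \eqref{s2-t2-3}.

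The main obstacle, shared with the proof of Theorem \ref{s2-t1}, is the selection of a one-parameter subgroup whose exponent lies in $M(\R^m)$ rather than merely in $\mathrm{End}(\R^m)$: this requires the Hudson--Mason analysis of the admissible exponents modulo the compact symmetry group, guaranteeing that any eigenvalue with zero real part is a simple root of the minimal polynomial. The $d$-dimensional ``time'' domain and the operator $E$ contribute nothing to that step, since it takes place entirely in the state space $\R^m$; the only additional care needed is to verify that stochastic continuity of $X$ together with continuity of $r\mapsto r^E$ gives closedness of $\mathcal{S}$ and $\mathcal{S}_{HM}$, and that properness (fullness of $\mathcal{L}(X(t_0))$ at one point $t_0\neq0$) gives compactness of their kernels.
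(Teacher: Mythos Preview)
Your approach is essentially the same as the paper's: the paper simply says to redefine the sets $G_r$ without the $t$-dependent translation for part (i), and with only an $r$-dependent translation for part (ii), and then to rerun the proof of Theorem~\ref{s2-t1} verbatim. Your formulation in terms of the closed subgroup $\mathcal{S}\subseteq(0,\infty)\times\mathrm{Aut}(\R^m)$ (resp.\ $\mathcal{S}_{HM}$) with surjective projection and compact kernel is just a repackaging of the same Hudson--Mason Lie-group argument; the compactness of the kernel that you emphasize replaces the paper's two-case dichotomy (Lemma~\ref{s3-l2}) and is arguably a bit cleaner, but the substance is identical.

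One small point: in your ``moreover'' argument for (i), the implication ``$r^D X(t_0)\to 0$ in law with $X(t_0)$ full forces $(r^D)^*\theta\to 0$'' is not immediate as stated. You should use that $D\in M(\R^m)$ is already known, so along any sequence $r_n\downarrow 0$ the operators $r_n^{D^*}$ are bounded; then any subsequential limit $z_1$ of $r_n^{D^*}\theta$ would give $\widehat{\mu}_{t_0}(cz_1)=1$ for all $c$, contradicting fullness unless $z_1=0$. This is exactly the argument the paper uses (via symmetrization and Lemma~\ref{s2-l2}) in the proof of Theorem~\ref{s2-t1}.
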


A $(d, m)$-random field $X=\{X(t), t \in \R^d\}$ is called
operator-self-similar with exponents ($E, D$) (or $(E, D)$-o.s.s.) if
(\ref{s2-t2-2}) holds. By Corollary \ref{s2-c2}, we see that if $X$ is
a w.o.s.s. $(d, m)$-random field as in Theorem \ref{s2-t1}, then the
$(d, m)$-random field $Y= \{X(t) - b(t), t \in \R^d\backslash \{0\}$\} is
operator-self-similar with exponents $(E, D)$. Using the
terminology of Sato \cite{S91}, we also call the function $b(t)$ in Corollary
\ref{s2-c2} the drift function of $(d, m)$-random field $X$.

Recall that a $(d, m)$-random field $X$ is said to have stationary increments
if for all $h\in\R^d$,
\beqlb\label{s2-t1-3}
\big\{X(t+h)-X(h), \, t\in \R^d\big\}\stackrel{d}{=}
\big\{X(t), \, t\in \R^d\big\}.
\eeqlb
Now we turn to construction of interesting examples of stable o.s.s. $(d, m)$-random
fields with stationary increments, by using stochastic integrals with respect to
a stable random measure. We refer to Samorodnitsky and Taqqu \cite{ST94} for a
systematic account on the latter. For simplicity we will only consider symmetric
$\alpha$-stable (S$\alpha$S) random fields and the main idea comes from \cite{BMS07},
\cite{MX00} and \cite{MM94}. By using stochastic integral with respect to a strictly
stable random measure one can extend the construction to obtain strictly stable o.s.s.
$(d, m)$-random fields. Kolody\'{n}ski and Rosi\'{n}ski \cite{KR03} use this more
general approach.

For any given operators $E \in Q(\R^d)$ and $D \in Q(\R^m)$ we construct
$(E, D)$-o.s.s. $\alpha$-stable random fields by using stochastic integrals
with respect to a symmetric $\alpha$-stable random vector measure (when
$\alpha=2$ the resulting o.s.s. random fields are Gaussian). For this purpose,
we recall briefly the definitions of stochastic integrals with respect to
vector-valued $\alpha$-stable random measures.

Let $(\Omega, \mathcal{F}, \P)$ be the underlying probability space and
let $L^0(\Omega)$ be the set of all $\R^m$-valued random vectors defined
on $(\Omega, \mathcal{F}, \P)$. Let $S_{m-1}$ be the unit sphere in $\R^m$
with the Borel algebra $\mathcal{B}(S_{m-1})$.

Let $K$ be a $\sigma$-finite measure on $\R^d\times S_{m-1}$ such
that for any $A\in \mathcal{B}(\R^d)$, $K(A\times\cdot)$ is a
symmetric finite measure on $(S_{m-1}, \mathcal{B}(S_{m-1}))$.
Denote
\[
\mathcal{M}:=\{A\in\mathcal{B}(\R^d):\, K(A, S_{m-1})<\infty\}.
\]

We first give the definition of a vector-valued symmetric
$\alpha$-stable (S$\alpha$S) random measure.

\begin{defn}\label{s4-d1}
{\it An $\R^m$-valued S$\alpha$S random measure on $(\R^d,
\mathcal{B}(\R^d))$ with control measure $K$ is an
independently scattered $\sigma$-additive $\R^m$-valued
set function $M: \mathcal{M}\to L^0(\Omega)$
such that, for every $A\in \mathcal{M}$, the random vector
$(M^1(A),\cdots, M^m(A))$ is jointly S$\alpha$S with
spectral measure $K(A, \cdot)$. Here, the meaning of
``independently scattered" and ``$\sigma$-additive" is
the same as in  Section 3.3 of \cite{ST94}.}
\end{defn}

One can apply Kolmogorov's extension theorem to show that
$\R^m$-valued S$\alpha$S random measure $M$ in
Definition \ref{s4-d1} exists, with finite-dimensional
distributions characterized by
\beqlb\label{s4-d1-1}
&&\E\exp\Bigg\{i\sum_{j=1}^k \l \theta_j, M(A_j)
\r\Bigg\}\nonumber \\
&&\qquad\quad=\exp\Bigg\{-\int_{\R^d}\int_{S_{m-1}}
\bigg|\sum_{j=1}^k\sum_{l=1}^m s_l\theta_{j,l}
{\bf 1}_{A_j}(x)\bigg|^\alpha \, K(\d x, \d s)\Bigg\},
\eeqlb
where $A_j \in\mathcal{M}$ and $\theta_j=(\theta_{j,1},\cdots,
\theta_{j,m})\in \R^m$ for all $k\geq 1$ and
$j=1,\cdots,k$.

In this paper, unless stated otherwise, the control measure $K$
will always be assumed to have the form $K(A, B)=\lambda_d(A)
\Gamma(B)$ for all $A\in \mathcal{B}(\R^d)$ and $B\in\mathcal{B}
(S_{m-1})$, where $\lambda_d$ is the Lebesgue measure on
$\R^d$ and $\Gamma(\cdot)$ is the normalized uniform measure on
$S_{m-1}$ such that for all $\theta=(\theta_1, \cdots,\theta_m)
\in\R^m$,
$$
\int_{S_{m-1}}  \bigg|\sum_{l=1}^m s_l\theta_l\bigg|^\alpha
\Gamma(\d s)= |\theta|^\alpha.
$$
Therefore, for disjoint sets $A_j
\in \mathcal{M}$, $j=1,2,\cdots, k$, Eq. (\ref{s4-d1-1}) can
be written as
\beqlb\label{s4-d1-2}
\E\exp\bigg\{i\sum_{j=1}^k \l \theta_j,\, M(A_j)
\r\bigg\}=\exp\bigg\{-\sum_{j=1}^k\lambda_d(A_j)
|\theta_j|^\alpha\bigg\}.
\eeqlb

For any real $m\times m$ matrix $Q$, let $\|Q\|:=
\max_{|x|=1}|Qx|$ be the operator norm of $Q$. It is easy
to see that for $Q_1,\ Q_2 \in$ End$(\R^m)$,
$\|Q_1Q_2\| \le \|Q_1\|\cdot \|Q_2\|$.
The following theorem is an extension of Theorem 4.1 in
\cite{MM94} and defines stochastic integrals of matrix-valued
functions with respect to a vector-valued S$\alpha$S random
measure.

\begin{thm}\label{s4-t1}
Let $\{Q(u), u\in\R^d\}$ be a family of real $m\times m$-matrices.
If $Q(u)$ is $\mathcal{B}(\R^d)$-measurable and $\int_{\R^d}
\|Q(u)\|^\alpha\d u<\infty$, then the stochastic integral
$$
I(Q):=\int_{\R^d} Q(u) \, M (\d u)
$$
is well defined and it is a symmetric $\alpha$-stable
vector in $\R^m$ with characteristic function
\beqlb\label{s4-t1-1}
\E\Big[\e^{i \l\theta, I(Q)\r}\Big]
=\exp\bigg\{-\int_{\R^d}\big|Q(u)^*\theta\big|^\alpha
\d u\bigg\}, \;\qquad \forall \, \theta\in\R^m.
\eeqlb
\end{thm}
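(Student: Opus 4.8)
The plan is to construct $I(Q)$ as an $L^0(\Omega)$-limit of integrals of simple matrix-valued functions, following the standard recipe for stochastic integrals with respect to independently scattered random measures, but keeping careful track of the matrix/vector bookkeeping. First I would define the integral on simple functions: if $Q(u) = \sum_{j=1}^k Q_j \mathbf{1}_{A_j}(u)$ with the $A_j \in \mathcal{M}$ disjoint and $Q_j$ constant $m\times m$ matrices, set $I(Q) := \sum_{j=1}^k Q_j M(A_j)$. Using \eqref{s4-d1-2} together with the elementary identity $\langle \theta, Q_j M(A_j)\rangle = \langle Q_j^*\theta, M(A_j)\rangle$, a direct computation gives
\[
\E\big[\e^{i\langle\theta, I(Q)\rangle}\big] = \exp\bigg\{-\sum_{j=1}^k \lambda_d(A_j)\,|Q_j^*\theta|^\alpha\bigg\} = \exp\bigg\{-\int_{\R^d}|Q(u)^*\theta|^\alpha\,\d u\bigg\},
\]
so \eqref{s4-t1-1} holds for simple $Q$; in particular $I(Q)$ is S$\alpha$S in $\R^m$ because the exponent is (for each $\theta$) the $\alpha$-th power of a seminorm in $\theta$, which one recognizes as the exponent of an S$\alpha$S law in $\R^m$ by the L\'evy--Khinchine/spectral representation in \cite[Section 3]{ST94}.

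The next step is the approximation argument. Given measurable $Q$ with $\int_{\R^d}\|Q(u)\|^\alpha\,\d u < \infty$, I would choose simple matrix-valued functions $Q_n$ with $\int_{\R^d}\|Q(u) - Q_n(u)\|^\alpha\,\d u \to 0$ (possible entrywise, then combining the finitely many scalar approximations, using equivalence of matrix norms). To show $I(Q_n)$ is Cauchy in probability, apply \eqref{s4-t1-1} for the simple function $Q_n - Q_{n'}$: the characteristic function of $I(Q_n) - I(Q_{n'})$ at $\theta$ is $\exp\{-\int |(Q_n - Q_{n'})(u)^*\theta|^\alpha\,\d u\}$, and since $|(Q_n-Q_{n'})(u)^*\theta|^\alpha \le \|Q_n(u) - Q_{n'}(u)\|^\alpha|\theta|^\alpha$, this tends to $1$ uniformly for $\theta$ in compact sets, forcing $I(Q_n) - I(Q_{n'}) \to 0$ in distribution, hence in probability (the limit being $0$). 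Thus $I(Q_n)$ converges in probability to a limit $I(Q) \in L^0(\Omega)$, independent of the approximating sequence by the same estimate. Passing to the limit in $\E[\e^{i\langle\theta, I(Q_n)\rangle}]$ (bounded convergence, using $|(Q_n)^*\theta|^\alpha \to |Q^*\theta|^\alpha$ in $L^1(\d u)$ along a subsequence, or directly since the exponents converge) yields \eqref{s4-t1-1} for $Q$, and the form of the exponent again identifies $I(Q)$ as an S$\alpha$S vector.

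I expect the main technical point to be the verification that the exponent $\theta \mapsto \int_{\R^d}|Q(u)^*\theta|^\alpha\,\d u$ is genuinely the log-characteristic function of an S$\alpha$S law on $\R^m$ — i.e., that it can be written as $\int_{S_{m-1}}|\langle\theta, s\rangle|^\alpha\,\sigma(\d s)$ for a finite symmetric measure $\sigma$ — rather than merely a limit of such. This is where one invokes that the S$\alpha$S vectors $I(Q_n)$ converge in distribution and that the class of S$\alpha$S laws on $\R^m$ is closed under weak convergence (their spectral measures converge weakly), which can be cited from \cite[Chapter 3]{ST94}; alternatively one writes $\int_{\R^d}|Q(u)^*\theta|^\alpha\,\d u = \int_{\R^d}\int_{S_{m-1}}|\langle\theta, Q(u)^*s\rangle|^\alpha\,\Gamma(\d s)\,\d u$ by the normalization of $\Gamma$, and reads off the spectral measure as the pushforward of $\|Q(u)^*s\|^\alpha\,\Gamma(\d s)\,\d u$ under $(u,s)\mapsto Q(u)^*s/|Q(u)^*s|$, whose total mass is $\int_{\R^d}\|Q(u)\|$-controlled and finite. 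The remaining items — $\sigma$-additivity of $Q\mapsto I(Q)$ in the appropriate sense and measurability — are routine once the characteristic-function identity is in hand.
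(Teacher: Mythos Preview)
Your proposal is correct and follows essentially the same approach as the paper: the paper omits the proof of Theorem~\ref{s4-t1}, citing Theorem~4.1 of \cite{MM94}, but the explicit proof given for the complex analogue Theorem~\ref{s4-t2} has exactly the two-step structure you describe (simple functions first via \eqref{s4-d1-2}, then $L^0$-approximation using the characteristic-function estimate to get Cauchy in probability). Your extra paragraph justifying that the limiting exponent really is an S$\alpha$S exponent is a point the paper leaves implicit; either of your two suggested routes (closure of S$\alpha$S laws under weak limits, or the explicit pushforward spectral measure) is fine.
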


It follows from (\ref{s4-t1-1}) and Lemma \ref{s2-l2} below that if the
matrix $Q(u)$ is invertible for $u$ in a set of positive $\lambda_d$-measure,
then the distribution of $I(Q)$ is full. This fact is useful for constructing
proper S$\alpha$S random fields.

One can also define stochastic integrals of complex matrix-valued
functions with respect to a complex vector-valued S$\alpha$S
random measure $\widetilde{M}$ defined as follows. Let $\overline{M}$
be an $\R^{2m}$-valued S$\alpha$S-random measure on $(\R^d, \mathcal{B}
(\R^d))$ with control measure $K=\lambda_d \times \Gamma$, where
$\Gamma$ is the normalized uniform measure on $S_{2m-1}$. Define the
complex-valued S$\alpha$S-random measures
$\widetilde{M}_k=\overline{M}_k+i\overline{M}_{m+k}$ for
all $k=1,\cdots,m$. Then $\widetilde{M}=(\widetilde{M}_1, \cdots,
\widetilde{M}_m)$ is a $\C^m$-valued S$\alpha$S-random measure
with control measure $K$. Its real and imaginary parts are
$\widetilde{M}_{R}=(\overline{M}_1,\cdots, \overline{M}_m)$
and $\widetilde{M}_{I}=(\overline{M}_{m+1},\cdots,\overline{M}_{2m})$,
respectively. The following theorem defines stochastic integrals of complex
matrix-valued functions with respect to $\widetilde{M}$.

\begin{thm}\label{s4-t2}
Let $\{\widetilde{Q}_1(u),\, u\in\R^d\}$ and $\{\widetilde{Q}_2(u),
\, u\in\R^d\}$ be two families of real $m\times m$-matrices. Let
$\widetilde{Q}(u)=\widetilde{Q}_1(u)+i\widetilde{Q}_2(u)$
for all $u\in\R^d$. If $\widetilde{Q}_1(u)$ and $\widetilde{Q}_2(u)$
are $\mathcal{B}(\R^d)$-measurable and $\int_{\R^d}
(\|\widetilde{Q}_1(u)\|^\alpha+\|\widetilde{Q}_2(u)\|^\alpha)
\, \d u<\infty$, then
$$
\widetilde I(\widetilde{Q}):={\rm Re}\int_{\R^d} \widetilde{Q}(u)\,
\widetilde{M}(\d u)
$$
is well defined and it is a symmetric $\alpha$-stable vector in
$\R^m$ with its characteristic function given by
\beqlb\label{s4-t2-1}
\begin{split}
&\E\Big[\e ^{i\l \theta, \widetilde I(\widetilde{Q})\r}\Big]\\
&=\exp\Bigg\{-\int_{\R^d}
\bigg(\sqrt{\big|\widetilde{Q}_1(u)^*\theta\big|^2+
 \big|\widetilde{Q}_2(u)^*\theta\big|^2}\, \bigg)^\alpha\,
\d u\Bigg\},\quad \; \forall \, \theta\in\R^m.
 \end{split}
 \eeqlb
\end{thm}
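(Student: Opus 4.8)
The plan is to reduce the complex integral to a real one and then invoke Theorem \ref{s4-t1}. First I would observe that writing $\widetilde{M}(\d u) = \widetilde M_R(\d u) + i\widetilde M_I(\d u)$ and $\widetilde Q(u) = \widetilde Q_1(u) + i\widetilde Q_2(u)$, one has
\[
{\rm Re}\,\big(\widetilde Q(u)\widetilde M(\d u)\big)
= \widetilde Q_1(u)\,\widetilde M_R(\d u) - \widetilde Q_2(u)\,\widetilde M_I(\d u).
\]
Hence $\widetilde I(\widetilde Q) = \int_{\R^d}\widetilde Q_1(u)\,\widetilde M_R(\d u) - \int_{\R^d}\widetilde Q_2(u)\,\widetilde M_I(\d u)$, which suggests assembling the $2m$-dimensional picture: recall $\widetilde M_R = (\overline M_1,\dots,\overline M_m)$ and $\widetilde M_I = (\overline M_{m+1},\dots,\overline M_{2m})$ are the first and second halves of the single $\R^{2m}$-valued S$\alpha$S random measure $\overline M$ with control measure $\lambda_d\times\Gamma_{2m-1}$. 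So I would define the $m\times 2m$ matrix-valued function $R(u) := \big[\,\widetilde Q_1(u)\ \ {-}\widetilde Q_2(u)\,\big]$, so that $\widetilde I(\widetilde Q) = \int_{\R^d} R(u)\,\overline M(\d u)$.

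The second step is to check the integrability hypothesis of Theorem \ref{s4-t1} (applied with $m$ replaced by $2m$ in the ``time''-domain role, i.e.\ to the $\R^{2m}$-valued random measure $\overline M$, producing an $\R^m$-valued integral — a mild restatement of that theorem for rectangular matrices, which goes through verbatim since the proof there only uses the submultiplicativity of the operator norm and dominated convergence). One has $\|R(u)\| \le \|\widetilde Q_1(u)\| + \|\widetilde Q_2(u)\|$ by the triangle inequality for the operator norm, hence
\[
\int_{\R^d}\|R(u)\|^\alpha\,\d u \le C_0\int_{\R^d}\big(\|\widetilde Q_1(u)\|^\alpha + \|\widetilde Q_2(u)\|^\alpha\big)\,\d u < \infty
\]
(using $(a+b)^\alpha \le 2^\alpha(a^\alpha+b^\alpha)$ or $\le a^\alpha+b^\alpha$ according to whether $\alpha\ge 1$ or $\alpha<1$). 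Thus $\widetilde I(\widetilde Q)$ is well defined and S$\alpha$S with characteristic function $\exp\{-\int_{\R^d}|R(u)^*\theta|^\alpha\,\d u\}$ by \eqref{s4-t1-1}.

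It remains to identify $|R(u)^*\theta|$. Since $R(u)^*\theta = \big(\widetilde Q_1(u)^*\theta,\ -\widetilde Q_2(u)^*\theta\big) \in \R^{2m}$, its Euclidean norm is $\sqrt{|\widetilde Q_1(u)^*\theta|^2 + |\widetilde Q_2(u)^*\theta|^2}$, which yields exactly \eqref{s4-t2-1}. The only genuine subtlety — and the step I would be most careful with — is the bookkeeping that the stated control measure for $\overline M$ is the normalized uniform measure $\Gamma$ on $S_{2m-1}$ (so that the scalar integral $\int_{S_{2m-1}}|\langle s,\eta\rangle|^\alpha\,\Gamma(\d s) = |\eta|^\alpha$ for $\eta\in\R^{2m}$), which is precisely the normalization under which Theorem \ref{s4-t1} delivers the clean form \eqref{s4-t1-1}; matching the dimension ($2m$ rather than $m$) and confirming that the ``Re'' operation corresponds on the level of $\overline M$ to selecting coordinates $1,\dots,m$ with a sign flip on coordinates $m+1,\dots,2m$ is what makes the reduction legitimate. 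Everything else is routine. \qed
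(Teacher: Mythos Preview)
Your argument is correct, but it takes a different route from the paper's. The paper proves Theorem~\ref{s4-t2} from scratch by mimicking the proof of Theorem~\ref{s4-t1}: it first treats simple functions $\widetilde Q$, defining $\widetilde I(\widetilde Q)=\sum_j\big(R_j\widetilde M_R(A_j)-I_j\widetilde M_I(A_j)\big)$ and computing the characteristic function directly from \eqref{s4-d1-2} (applied to the $\R^{2m}$-valued measure $\overline M$), and then passes to general $\widetilde Q$ by an approximation-in-probability argument. You instead package the reduction at the outset: by setting $R(u)=[\widetilde Q_1(u)\ \ {-}\widetilde Q_2(u)]$ you recognize $\widetilde I(\widetilde Q)$ as a single real stochastic integral $\int R(u)\,\overline M(\d u)$ and invoke (a rectangular-matrix version of) Theorem~\ref{s4-t1} once. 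The underlying computation is the same---both approaches ultimately rest on the identity $|R(u)^*\theta|^2=|\widetilde Q_1(u)^*\theta|^2+|\widetilde Q_2(u)^*\theta|^2$ and the normalization of $\Gamma$ on $S_{2m-1}$---but your route is more economical and avoids repeating the simple-function/approximation machinery, at the cost of needing the (harmless) observation that Theorem~\ref{s4-t1} extends verbatim to $m\times 2m$ integrands acting on an $\R^{2m}$-valued measure. The paper's version is more self-contained in that it does not appeal to any extension of Theorem~\ref{s4-t1}.
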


It follows from (\ref{s4-t2-1}) and Lemma \ref{s2-l2} below that if the
matrix $\widetilde Q_1(u)$ or $\widetilde Q_2(u)$ is invertible for
$u$ in a set of positive $\lambda_d$-measure,
then the distribution of $\widetilde I(\widetilde Q)$ is full.

Based on the above stochastic integrals, we can construct
moving-average type or harmonizable-type $\alpha$-stable random
fields by choosing suitable functions $Q$ and $\widetilde{Q}$. In
order to obtain o.s.s. random fields, we will make use of the
$\Theta$-homogeneous functions and the $(\beta,
\Theta)$-admissible functions as in \cite{BMS07}.

Suppose $\Theta\in Q(\R^d)$ with real parts of the eigenvalues
$0<a_1<a_2<\cdots<a_p$ for $p\leq d$. Let $q$ denote the trace of
$\Theta$. It follows from \cite[p.314]{BMS07} that every
$x\in\R^d\setminus\{0\}$ can be written uniquely as
$x=\tau(x)^\Theta l(x)$ for some radial part $\tau(x)>0$ and some
direction $l(x)\in \Sigma_0$ such that the functions $x\mapsto
\tau(x)$ and $x\mapsto l(x)$ are continuous, where
$\Sigma_0=\{x\in\R^d, \, \tau(x)=1\}$. It is well-known that
$\tau(x)=\tau(-x)$ and $\tau(r^\Theta x)=r\tau(x)$ for all $r>0$
and $x\in\R^d$. Moreover, $\Sigma_0$ is compact;
$\tau(x)\to\infty$ as $|x|\to\infty$ and $\tau(x)\to 0$ as $|x|\to0$.
In addition, Lemma 2.2 in \cite{BMS07} shows that there exists a
constant $C_0\geq 1$ such that for all $x,\, y\in\R^d$
\beqlb\label{s5-1}
\tau(x+y)\leq C_0(\tau(x)+\tau(y)).
\eeqlb
For convenience, we call $(\tau(x), l(x))$ the polar coordinates
of $x$ under operator $\Theta$. According to Definition 2.6 in
\cite{BMS07}, a function $\phi: \R^d\to\C$ is said to be
$\Theta$-homogeneous if $\phi(r^\Theta x)=r\phi(x)$ for all $r>0$
and $x\in\R^d\setminus\{0\}$. Obviously, if $\phi$ is
$\Theta$-homogeneous, continuous on $\R^d$ and takes positive
values on $\R^d\setminus\{0\}$, then $\phi(0)=0$,
\beqlb\label{s5-2}
M_{\phi}=\max_{\theta\in\Sigma_0}\phi(\theta)>0\;\ \ \text{and}\ \
\ m_{\phi}=\min_{\theta\in\Sigma_0}\phi(\theta)>0.
\eeqlb

Let $\beta>0$. Recall from Definition 2.7 in \cite{BMS07} that
a function $\psi: \R^d\to [0,\infty)$ is called $(\beta,
\Theta)$-admissible, if $\psi(x)>0$ for all $x\not=0$ and
for any $0<A<B$ there exists a positive constant $C_1>0$ such that,
for $A\leq |y|\leq B$,
$$
\tau(x)\leq 1 \ \Rightarrow \ |\psi(x+y)-\psi(y)|\leq C_1\tau(x)^\beta.
$$

For any given matrices $E \in Q(\R^d)$ and $D \in Q(\R^m)$, Theorem
\ref{s5-t1} provides a class of moving-average-type o.s.s. $\alpha$-stable
random fields with prescribed self-similarity exponents $(E, D)$.

\begin{thm}\label{s5-t1}
Suppose $\phi: \R^d\mapsto[0, \infty)$ is an $E$-homogeneous,
$(\beta, E)$-admissible function for some constant $\beta>0$. Let $q$
be the trace of $E$, $H$ be the maximum of the real parts of the eigenvalues
of $D \in Q(\R^m)$ and let $I$ be the identity operator in $\R^m$.
If $H<\beta$, then the random field
\begin{equation}\label{Eq:Xphi}
X_{\phi}(x)= \int_{\R^d}\Big[\phi(x-y)^{D-qI/\alpha}
-\phi(-y)^{D-qI/\alpha}\Big]\, M(\d y),\quad x\in\R^d
\end{equation}
is well defined, where the stochastic integral in (\ref{Eq:Xphi})
is defined as in Theorem \ref{s4-t1}. Furthermore, $X_\phi =
\{X_{\phi}(x), x\in\R^d\}$ is a stochastically
continuous $(E, D)$-o.s.s. $S \alpha S$-random field with stationary
increments.
\end{thm}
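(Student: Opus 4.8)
Write $A:=D-(q/\alpha)I$, so that the integrand in \eqref{Eq:Xphi} is $G_x(y):=\phi(x-y)^{A}-\phi(-y)^{A}$, which is well defined for $y\notin\{0,x\}$ since $\phi$ is positive off the origin; set $g_x:=\big(\int_{\R^d}\|G_x(y)\|^{\alpha}\,\d y\big)^{1/\alpha}$. The plan is to: (i) show $g_x<\infty$ for every $x$, so that $X_\phi(x)$ is a well-defined S$\alpha$S vector by Theorem~\ref{s4-t1}; (ii) record the joint characteristic function of $\big(X_\phi(x_1),\dots,X_\phi(x_n)\big)$; (iii) read off \eqref{s2-t2-2} and \eqref{s2-t1-3} from that characteristic function by changes of variables; (iv) deduce stochastic continuity. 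Throughout I will use that $\phi$ is comparable to $\tau$ (from $E$-homogeneity and the continuity of $\phi$ on $\R^d\setminus\{0\}$ forced by $(\beta,E)$-admissibility, $m_\phi\,\tau(y)\le\phi(y)\le M_\phi\,\tau(y)$), the standard operator-norm bounds (for each $\varepsilon>0$ there is $C\ge1$ with $\|s^{D}\|\le C\,s^{h_D-\varepsilon}$ for $0<s\le1$ and $\|s^{D}\|\le C\,s^{H+\varepsilon}$ for $s\ge1$, where $h_D>0$ is the smallest real part of an eigenvalue of $D$), and the polar change of variables under $E$, for which $\int_{\tau(y)\le1}\tau(y)^{\gamma}\,\d y<\infty\iff\gamma>-q$ and $\int_{\tau(y)\ge1}\tau(y)^{\gamma}\,\d y<\infty\iff\gamma<-q$.

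For (i) I would split $\R^d$ into a ball around $0$, a ball around $x$, the set $\{\tau(y)\ge R\}$ with $R$ large, and the bounded remainder (on which $G_x$ is continuous, hence bounded, hence integrable). Near $y=0$ the factor $\phi(x-y)^{A}$ is bounded while $\|\phi(-y)^{A}\|=\phi(-y)^{-q/\alpha}\|\phi(-y)^{D}\|\le C\,\tau(y)^{h_D-q/\alpha-\varepsilon}$, whose $\alpha$-th power is integrable there precisely because $h_D>0$; the ball around $x$ is handled symmetrically. On $\{\tau(y)\ge R\}$ I would use the $E$-scaling $\phi(x-y)=\tau(y)\,\phi\big(\tau(y)^{-E}x-l(y)\big)$ and $\phi(-y)=\tau(y)\,\phi(-l(y))$: since $l(y)\in\Sigma_0$ lies in a compact set bounded away from $0$ and $\tau\big(\tau(y)^{-E}x\big)=\tau(y)^{-1}\tau(x)\le1$ for $\tau(y)$ large, $(\beta,E)$-admissibility gives $|\phi(x-y)-\phi(-y)|\le C\,\tau(x)^{\beta}\,\tau(y)^{1-\beta}$, so in particular $\phi(x-y)/\phi(-y)\to1$; then $u^{A}-v^{A}=v^{A}\big((u/v)^{A}-I\big)$ together with $\|(u/v)^{A}-I\|\le C\,|u/v-1|$ for $u/v$ near $1$ yields $\|G_x(y)\|\le C\,\tau(x)^{\beta}\,\tau(y)^{\,H-q/\alpha+\varepsilon-\beta}$, whose $\alpha$-th power is integrable over $\{\tau(y)\ge R\}$ for $\varepsilon$ small exactly because $H<\beta$. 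I expect this far-field estimate to be the main obstacle: it is where the matrix bounds, the scaling of $\phi$ and $(\beta,E)$-admissibility have to be combined carefully, and it is the step that consumes the hypothesis $H<\beta$.

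For (ii), extending \eqref{s4-t1-1} in the obvious way gives, for $x_1,\dots,x_n\in\R^d$ and $\theta_1,\dots,\theta_n\in\R^m$,
\[
\E\exp\Big(i\sum_{j=1}^{n}\l\theta_j,X_\phi(x_j)\r\Big)
=\exp\Big(-\int_{\R^d}\Big|\sum_{j=1}^{n}G_{x_j}(y)^{*}\theta_j\Big|^{\alpha}\,\d y\Big).
\]
For (iii), replacing each $x_j$ by $r^{E}x_j$ and substituting $y=r^{E}z$ (whose Jacobian is $\det(r^{E})=r^{q}$, $q$ being the trace of $E$), the $E$-homogeneity of $\phi$ gives $G_{r^{E}x_j}(r^{E}z)=r^{A}G_{x_j}(z)$; using $(r^{A})^{*}=r^{A^{*}}$ and $r^{D^{*}}=r^{q/\alpha}r^{A^{*}}$ one checks that after the substitution the right-hand side above equals $\exp\big(-\int|\sum_j G_{x_j}(z)^{*}r^{D^{*}}\theta_j|^{\alpha}\,\d z\big)=\E\exp\big(i\sum_j\l\theta_j,r^{D}X_\phi(x_j)\r\big)$, i.e.\ \eqref{s2-t2-2}. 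Likewise $X_\phi(x+h)-X_\phi(h)=\int_{\R^d}\big(\phi(x+h-y)^{A}-\phi(h-y)^{A}\big)\,M(\d y)$, and the substitution $y=z+h$ (Jacobian $1$, by translation invariance of $\lambda_d$) shows its joint characteristic function is that of $X_\phi$, giving \eqref{s2-t1-3}; in particular $X_\phi$ is an S$\alpha$S random field.

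For (iv), stationary increments together with $X_\phi(0)=0$ a.s.\ give $X_\phi(x)-X_\phi(x')\stackrel{d}{=}X_\phi(x-x')$, so stochastic continuity follows once $g_h\to0$ as $h\to0$. Writing $\rho=\tau(h)$ and substituting $y=\rho^{E}w$ as in (iii), $g_h^{\alpha}=\rho^{q}\int\|\rho^{A}G_{l(h)}(w)\|^{\alpha}\,\d w\le\rho^{q}\|\rho^{A}\|^{\alpha}\sup_{v\in\Sigma_0}g_v^{\alpha}$; the supremum is finite because the estimates in (i) are uniform for $x$ in the compact set $\Sigma_0$ (which is bounded away from $0$, so the two singularities stay separated), and $\rho^{q}\|\rho^{A}\|^{\alpha}\le C\,\rho^{\alpha(h_D-\varepsilon)}\to0$ as $\rho\to0$ since $h_D>0$. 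Combining (i)--(iv), $X_\phi$ is a stochastically continuous $(E,D)$-o.s.s.\ S$\alpha$S random field with stationary increments. \qed
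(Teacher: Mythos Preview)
Your proof is correct and, for parts (i)--(iii), follows essentially the same route as the paper: the well-definedness argument splits $\R^d$ into a region near the singularities (where $h_D>0$ yields integrability) and a far-field region (where $E$-homogeneity, $(\beta,E)$-admissibility and the bound $\|s^A-I\|\le C|s-1|$ combine to give the $\tau(y)^{H-q/\alpha+\varepsilon-\beta}$ decay, using $H<\beta$); the o.s.s.\ and stationary-increments properties come from the same changes of variables $y=r^Ez$ and $y=z+h$ in the joint characteristic function. Your four-region decomposition is just a minor repackaging of the paper's two-region split.

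The one genuine difference is your stochastic-continuity argument. The paper proves $\Upsilon_\phi^\alpha(x)\to 0$ as $x\to 0$ by the generalized dominated convergence theorem, constructing an explicit dominating family $\Phi(x,y)$ from the estimates in (i) and checking $\int\Phi(x,y)\,\d y\to\int\Phi(y)\,\d y$. You instead use the scaling you have already established: writing $h=\rho^E l(h)$ and substituting $y=\rho^E w$ gives $g_h^\alpha\le \|\rho^D\|^\alpha\sup_{v\in\Sigma_0}g_v^\alpha$, and then $\|\rho^D\|^\alpha\to 0$ as $\rho\to 0$ since $D\in Q(\R^m)$. This is more elegant and avoids the dominated-convergence machinery entirely, at the cost of requiring the (easy but not entirely trivial) observation that the bounds in (i) are uniform for $x$ in the compact set $\Sigma_0$; since $\tau(x)=1$ there and the singularities at $0$ and $x$ stay separated, this uniformity indeed holds.
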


\begin{rem}
We can choose $E$ and $D$ to ensure that the S$\alpha$S-random field
$X$ is proper. A sufficient condition
is that $q/\alpha$ is not an eigenvalue of $D$. This implies that, for
every $x \in \R^d$, the operator $\phi(x-y)^{D-qI/\alpha}
-\phi(-y)^{D-qI/\alpha}$ is invertible for $y$ in a subset of $\R^d$ with
positive Lebesgue measure, which ensures that the distribution of
$X_{\phi}(x)$ is full.
\end{rem}

When $m=1$ and $D= HI$, Theorem \ref{s5-t1} reduces to Theorem 3.1 in
Bierm\'{e}, Meerschaert and Scheffler \cite{BMS07}. For a general
$D \in Q(\R^m)$, the following example of $X_{\phi}$ is instructive.
Let $E = (e_{ij})$ be the diagonal matrix in $Q(\R^d)$ with
$e_{jj}=\gamma_j^{-1}$, where $\gamma_j \in (0, 1)$ ($1\le j \le d$)
are constants. It can be verified that there exists a constant
$C_2 \ge 1$ such  that the corresponding radial part $\tau(x)$
satisfies
\begin{equation}\label{Eq:tau}
C_2^{-1} \sum_{j=1}^d|x_j|^{\gamma_j} \le \tau(x) \le C_2\,
\sum_{j=1}^d|x_j|^{\gamma_j}
\end{equation}
for all $x \in \R^d$. Note that the function $\phi(x) =
\sum_{j=1}^d|x_j|^{\gamma_j}$ is $E$-homogeneous and $(\beta, E)$-admissible
with $\beta = 1$. This latter assertion follows from (\ref{Eq:tau}) and
the elementary inequality $|x+y|^\gamma \le |x|^\gamma+|y|^\gamma$ if
$\gamma \in (0, 1)$. Let $D\in Q(\R^m)$ be as in Theorem \ref{s5-t1},
then $X_\phi = \{X_{\phi}(x), \, x\in\R^d\}$ defined by
$$
X_{\phi}(x)= \int_{\R^d}\Bigg[\bigg(\sum_{j=1}^d |x_j-y_j|^{\gamma_j}
\bigg)^{D-qI/\alpha} -\bigg(\sum_{j=1}^d |y_j|^{\gamma_j}
\bigg)^{D-qI/\alpha} \Bigg]\, M(\d y)
$$
is an $(E, D)$-o.s.s. $S \alpha S$ random field with stationary increments.
Moreover, since $H < 1$ and $q/\alpha > 1$ (we have assumed $d \ge 2$ in
this paper), we see that $X_{\phi}$ is proper.

Similarly to Theorem \ref{s5-t1}, we can construct harmonizable-type
o.s.s. $S \alpha S$ stable random fields as follows.
\begin{thm}\label{s5-t2}
Suppose $\psi: \R^d\mapsto[0, \infty)$ is a continuous,
$E^*$-homogeneous function such that $\psi(x)\not=0$ for $x\not=0$.
Let $q$ be the trace of $E$ and let $I$ be the identity operator in $\R^m$.
If $D\in Q(\R^m)$ and its maximal real part of the eigenvalues
$H< a_1$, where $a_1$ is the minimal real part of the eigenvalues of
$E$, then the random field
\begin{equation}\label{Eq:Xpsi}
\widetilde{X}_{\psi}(x)= {\rm Re}\int_{\R^d}\big(\e^{i \l x,y \r}-1\big)
\, \psi(y)^{-D-qI/\alpha}\, \widetilde{M}(\d y), \quad \forall
x\in\R^d,
\end{equation}
is well defined, where the stochastic integral in (\ref{Eq:Xpsi})
is defined as in Theorem \ref{s4-t2}. Furthermore,
$\widetilde{X}_{\psi}= \{\widetilde{X}_{\psi}(x), x\in\R^d\}$ is a
stochastically continuous, proper $(E, D)$-o.s.s. $S \alpha S$-random
field with stationary increments.
\end{thm}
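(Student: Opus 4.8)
\emph{Well-definedness.} Put $A(u):=\psi(u)^{-D-qI/\alpha}$ and $\widetilde{Q}(u):=(\e^{i\l x,u\r}-1)A(u)$, with real and imaginary parts $\widetilde{Q}_1(u)=(\cos\l x,u\r-1)A(u)$ and $\widetilde{Q}_2(u)=\sin\l x,u\r\,A(u)$; by Theorem \ref{s4-t2} it suffices to show $\int_{\R^d}(\|\widetilde{Q}_1(u)\|^\alpha+\|\widetilde{Q}_2(u)\|^\alpha)\,\d u<\infty$, and since $|\cos\vartheta-1|^\alpha+|\sin\vartheta|^\alpha\le 2|\e^{i\vartheta}-1|^\alpha$ for real $\vartheta$, it is enough to bound $\int_{\R^d}|\e^{i\l x,u\r}-1|^\alpha\|A(u)\|^\alpha\,\d u$. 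I would work in the polar coordinates $u=\tau(u)^{E^*}l(u)$ of $u\neq 0$ under $E^*$ (all the facts recalled before Theorem \ref{s5-t1}, including the integration formula $\d u\leftrightarrow\tau^{q-1}\,\d\tau\,\sigma(\d l)$ for a finite measure $\sigma$ on $\Sigma_0$, apply with $\Theta=E^*$, whose trace is again $q$), so that $\psi(u)=\tau(u)\,\psi(l(u))$ with $\psi(l(u))$ bounded away from $0$ and from $\infty$ on the compact set $\Sigma_0$, whence $\|A(u)\|\le C\,\tau(u)^{-q/\alpha}\|\tau(u)^{-D}\|$. Combined with the standard operator-norm estimates (cf. \cite{BMS07}) --- for each $\delta>0$ there is $C\ge 1$ with $\|\tau^{E^*}\|\le C\tau^{a_1-\delta}$ and $\|\tau^{-D}\|\le C\tau^{-H-\delta}$ for $0<\tau\le 1$, and $\|\tau^{-D}\|\le C\tau^{-h_0+\delta}$ for $\tau\ge 1$, where $h_0>0$ is the minimal real part of the eigenvalues of $D$ --- I would split the integral at $\tau(u)=1$: on $\{\tau(u)\le 1\}$ the bound $|\e^{i\l x,u\r}-1|\le|x|\,|u|\le C|x|\,\tau(u)^{a_1-\delta}$ gives an integrand $\le C\,\tau(u)^{\alpha(a_1-H)-q-2\alpha\delta}$, whose integral against $\tau^{q-1}\,\d\tau$ over $(0,1]$ is finite once $2\delta<a_1-H$, which is possible because $H<a_1$; on $\{\tau(u)\ge 1\}$ the crude bound $|\e^{i\l x,u\r}-1|\le 2$ gives an integrand $\le C\,\tau(u)^{-q-\alpha h_0+\alpha\delta}$, whose integral over $[1,\infty)$ is finite once $\delta<h_0$, which holds because $D\in Q(\R^m)$. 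Hence $\widetilde{X}_{\psi}$ is well defined.

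\emph{Characteristic functional and stochastic continuity.} From (\ref{s4-t2-1}), noting that $|\widetilde{Q}_1(u)^*\theta|^2+|\widetilde{Q}_2(u)^*\theta|^2=|\e^{i\l x,u\r}-1|^2\,|A(u)^*\theta|^2$, and running the same approximation argument that proves Theorem \ref{s4-t2} for several integrals jointly, I would obtain that for all $k\ge 1$, $x_1,\dots,x_k\in\R^d$ and $\theta_1,\dots,\theta_k\in\R^m$,
\begin{equation}\label{Eq:jcf52}
\E\exp\Big\{i\sum_{j=1}^k\l\theta_j,\widetilde{X}_{\psi}(x_j)\r\Big\}=\exp\Big\{-\int_{\R^d}\Big|\sum_{j=1}^k\overline{(\e^{i\l x_j,u\r}-1)}\;A(u)^*\theta_j\Big|^\alpha\,\d u\Big\},
\end{equation}
the inner norm being that of $\C^m$. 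For stochastic continuity I would show that the characteristic function of $\widetilde{X}_{\psi}(x_n)-\widetilde{X}_{\psi}(x)$ tends to $1$ as $x_n\to x$, which by (\ref{Eq:jcf52}) amounts to $\int_{\R^d}|\e^{i\l x_n,u\r}-\e^{i\l x,u\r}|^\alpha\,|A(u)^*\theta|^\alpha\,\d u\to 0$ for each $\theta$; the integrand tends to $0$ pointwise and, once $|x_n-x|\le 1$, is dominated by $\min(2,|u|)^\alpha\|A(u)\|^\alpha|\theta|^\alpha\in L^1(\R^d)$ (the bound of the previous step), so dominated convergence applies.

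\emph{Stationary increments and operator-self-similarity.} The vector $\widetilde{X}_{\psi}(x_j+h)-\widetilde{X}_{\psi}(h)$ is represented by the integrand $\e^{i\l h,u\r}(\e^{i\l x_j,u\r}-1)A(u)$, so substituting it into (\ref{Eq:jcf52}) only introduces a factor $|\e^{-i\l h,u\r}|=1$; hence the joint law of $(\widetilde{X}_{\psi}(x_j+h)-\widetilde{X}_{\psi}(h))_{1\le j\le k}$ coincides with that of $(\widetilde{X}_{\psi}(x_j))_{1\le j\le k}$, which is (\ref{s2-t1-3}). For $(E,D)$-operator-self-similarity I would start from (\ref{Eq:jcf52}) with each $x_j$ replaced by $r^E x_j$, use $\l r^E x_j,u\r=\l x_j,r^{E^*}u\r$, and substitute $v=r^{E^*}u$; then $\d u=r^{-q}\,\d v$ and, by $E^*$-homogeneity of $\psi$, $A(u)=\psi(r^{-E^*}v)^{-D-qI/\alpha}=r^{q/\alpha}r^{D}\psi(v)^{-D-qI/\alpha}$, so that $A(u)^*\theta_j=r^{q/\alpha}(\psi(v)^{-D-qI/\alpha})^*r^{D^*}\theta_j$. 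Pulling the scalar $r^{q/\alpha}$ out of $|\cdot|^\alpha$ produces a factor $r^q$ that cancels the Jacobian $r^{-q}$, and since $\l r^{D^*}\theta_j,w\r=\l\theta_j,r^D w\r$ the right-hand side of the transformed (\ref{Eq:jcf52}) becomes $\E\exp\{i\sum_j\l\theta_j,r^D\widetilde{X}_{\psi}(x_j)\r\}$ by another application of (\ref{Eq:jcf52}) to the exponents $r^{D^*}\theta_j$. Since $k$ and the $x_j,\theta_j$ are arbitrary, this is precisely (\ref{s2-t2-2}).

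\emph{Properness and the main difficulty.} Fix $x\neq 0$. Since $\psi(u)^{-D-qI/\alpha}$ is a positive scalar times a matrix exponential, it is invertible for every $u\neq 0$, so $\widetilde{Q}_1(u)=(\cos\l x,u\r-1)\psi(u)^{-D-qI/\alpha}$ is invertible as soon as $\l x,u\r\notin 2\pi\mathbb{Z}$, i.e. off a $\lambda_d$-null countable union of hyperplanes; by the remark following Theorem \ref{s4-t2} the distribution of $\widetilde{X}_{\psi}(x)$ is full, so $\widetilde{X}_{\psi}$ is proper. The only genuinely delicate step is the integrability estimate: one must keep careful track of the operator norms of $\tau^{E^*}$ and $\tau^{\pm D}$ both near $0$ and near $\infty$, and observe that $H<a_1$ is exactly the hypothesis that tames the integrand near the origin --- where the extra decay $|\e^{i\l x,u\r}-1|\le|x|\,|u|$ is the saving provided by using increments rather than $\e^{i\l x,u\r}$ alone --- while the positivity of the real parts of the eigenvalues of $D$ controls the tail; once (\ref{Eq:jcf52}) is in hand the remaining assertions are routine.
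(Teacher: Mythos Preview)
Your proof is correct and follows essentially the same approach as the paper. For well-definedness you split the integral at $\tau(u)=1$, control the small-$\tau$ region using $|\e^{i\l x,u\r}-1|\le |x|\,|u|\le C\tau(u)^{a_1-\delta}$ together with $\|\tau^{-D}\|\le C\tau^{-H-\delta}$, and control the large-$\tau$ region using $|\e^{i\l x,u\r}-1|\le 2$ and $\|\tau^{-D}\|\le C\tau^{-h_0+\delta}$; this is exactly the paper's argument (the paper packages the constants slightly differently, taking $\delta<\frac{\alpha}{1+\alpha}(a_1-H)\wedge(\alpha h)$, but the substance is identical). The paper only writes out this integrability estimate and the properness remark, leaving stochastic continuity, stationarity of increments and $(E,D)$-o.s.s.\ to the reader; your joint characteristic functional \eqref{Eq:jcf52}, the dominated-convergence argument for continuity, the $|\e^{-i\l h,u\r}|=1$ observation for stationary increments, and the change of variables $v=r^{E^*}u$ for o.s.s.\ are the natural and correct ways to fill in those gaps.
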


\begin{rem}
Unlike in Theorem \ref{s5-t1}, $\widetilde{X}_{\psi}$ in Theorem \ref{s5-t2}
is always proper.
\end{rem}

Theorem \ref{s5-t2} is a multivariate extension of Theorem 4.1 and
Corollary 4.2 of Bierm\'{e}, Meerschaert and Scheffler \cite{BMS07}. To
give a representative of the harmonizable-type o.s.s. in Theorem \ref{s5-t2},
again we take $E= (e_{ij})
\in Q(\R^d)$ to be the diagonal matrix as above. Let $\psi(x) =
\sum_{j=1}^d|x_j|^{\gamma_j}$, which is $E^*$-homogeneous.
Then, for any $D\in Q(\R^m)$ with its maximal real parts of
the eigenvalues $H< \min\{\gamma_j^{-1}\}$, the S$\alpha$S-random
field $\widetilde{X}_{\psi} = \{\widetilde{X}_{\psi}(x), \,
x\in\R^d\}$ defined by
\begin{equation}\label{Eq:Xpsi2}
\widetilde{X}_{\psi}(x)= {\rm Re}\int_{\R^d}\frac{\e^{i \l x,y \r}-1}
{\big(\sum_{j=1}^d |y_j|^{\gamma_j}\big)^{D +qI/\alpha}}\,
\widetilde{M}(\d y)
\end{equation}
is proper and $(E, D)$-o.s.s. with stationary increments. In the special
case of $D = I$, the stable random field $\widetilde{X}_{\psi}$
has been studied in Xiao \cite{Xiao09b}. We believe that the argument
in proving Theorem 3.4 in \cite{Xiao09b} can be applied to show
that $\widetilde{X}_{\psi}$ has the property of strong local
nondeterminism, which is useful for establishing the joint continuity
of the local times of $\widetilde{X}_{\psi}$.

The o.s.s. S$\alpha$S $(d, m)$-random fields in Theorems \ref{s5-t1}
and \ref{s5-t2} provide concrete examples for the
$G$-self-similar stable random fields in Proposition 4.1 of Kolody\'{n}ski
and Rosi\'{n}ski \cite{KR03}. Recall that the o.s.s. S$\alpha$S random fields
in Theorems \ref{s5-t1} and \ref{s5-t2} are $G$-self-similar with cocycle
$C$, where $G= \{r^E, r>0\}$ and $C(r, t) = r^D$ for every $r > 0$
and $t \in \R^d$. In the following we provide non-singular actions of
$G = \{r^E, r > 0\}$ on $(\R^d, {\cal B}(\R^d), \lambda_d)$ and cocycles
$c: G\times \R^d \to \{-1, 1\}$ (or $\{z \in \C: |z| =1\}$ in the complex
case) such that the integrands in (\ref{Eq:Xphi}) and (\ref{Eq:Xpsi})
satisfy the recurrence equation (4.1)
in Kolody\'{n}ski and Rosi\'{n}ski \cite{KR03}.

For the o.s.s. S$\alpha$S random field ${X}_{\phi}$ in Theorem \ref{s5-t1},
the non-singular action of $G$ on $\R^d$ is $L_r(s) = r^{E}s$, and the
cocycle $c(r, x) \equiv 1$. A change of variable shows that
\begin{equation}\label{Eq:RN}
\frac{{\rm d} (\lambda_d\circ L_{r^{-1}})} {{\rm d}\lambda_d} = r^{-q},
\end{equation}
where $q$ is the trace of $E$. By using (\ref{Eq:RN}) and the $E$-homogeneity
of $\phi$ one can verify that the family of
integrands $\{{f}_x, x \in \R^d\}$ in Theorem \ref{s5-t1}, where
\[
f_x(y) =  \phi(x-y)^{D-qI/\alpha}
-\phi(-y)^{D-qI/\alpha}
\]
is a matrix-valued function, satisfies
\begin{equation}\label{Eq:Recurrence}
f_{r^Ex} (y) = c(r, L_{r^{-1}}y) \bigg\{\frac{{\rm d} (\lambda_d\circ L_{r^{-1}})}
 {{\rm d}\lambda_d}\bigg\}^{1/\alpha} C(r, x)f_x\circ L_{r^{-1}}(y), \qquad \forall
 y \in \R^d,
\end{equation}
which is an analogue of the recurrence equation (4.1) in Kolody\'{n}ski and
Rosi\'{n}ski \cite{KR03}.

For the o.s.s. S$\alpha$S random field $\widetilde{X}_{\psi}$ in Theorem \ref{s5-t2},
the non-singular action of $G$ on $\R^d$ is $\widetilde{L}_r(s) = r^{E^*}s$
and the cocycle $c(r, x) \equiv 1$. Then, by using (\ref{Eq:RN}) and the
$E^*$-homogeneity of $\psi$ one can verify
that the family of integrands $\{\widetilde{f}_x, x \in \R^d\}$,
where
\[
\widetilde{f}_x(y) = \big(\e^{i \l x,y \r}-1\big)\, \psi(y)^{-D-qI/\alpha},
\]
satisfies the recurrence equation  (\ref{Eq:Recurrence}) with $L$ being replaced
by $\widetilde{L}$.
\bigskip

\section{Characterization of space-scaling exponents:
Proofs of Theorems \ref{s2-t1} and \ref{s2-t2}}

In this section, we prove Theorem \ref{s2-t1}. The main idea
of our proof is originated from \cite{HM82} and \cite{S91}.
We will make use of the following lemmas which are taken
from \cite{S91} and \cite{S69}, respectively.

\begin{lem}\label{s2-l1}
\upshape{(\cite[Lemma 2.6]{S91})}
\newline\noindent For any integer $n \ge 1$, $H\in Q(\R^n)$
if and only if $\lim_{r\downarrow0} r^H x=0$ for every $x\in\R^n$.
$H\in M(\R^n)$ if and only if $\limsup_{r\downarrow
0}|r^Hx|<\infty$ for every $x\in\R^n$.
\end{lem}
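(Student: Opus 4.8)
\medskip
\noindent\emph{Proof proposal.} The plan is to reduce the whole statement to the Jordan structure of $H$. First I would invoke the additive (Jordan--Chevalley) decomposition $H = S + N$ with $S$ semisimple, $N$ nilpotent, $SN = NS$, and $S, N$ both real; then $r^H = r^{S+N} = r^S r^N$ since $S$ and $N$ commute. Here
\[
r^N = \sum_{j=0}^{n-1}\frac{(\ln r)^j}{j!}\,N^j
\]
is a finite matrix polynomial in $\ln r$ with nilpotent coefficients, so $\|r^N\| \le C_0\big(1 + |\ln r|^{n-1}\big)$ for $0 < r \le 1$; and over $\C$ the operator $r^S$ is conjugate to the diagonal matrix with entries $r^\lambda = e^{\lambda \ln r}$, $\lambda$ running through the eigenvalues of $H$ with multiplicity, so $|r^\lambda| = r^{{\rm Re}\,\lambda}$ and $\|r^S\|$ is bounded above and below by positive constant multiples of $\max_\lambda r^{{\rm Re}\,\lambda}$. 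I would also record the routine complexification remark: since $r^H$ is a real matrix, $\{r^H x : 0 < r \le 1\}$ is bounded (resp.\ tends to $0$ as $r\downarrow 0$) for every $x\in\R^n$ if and only if the same holds for every $x\in\C^n$, which lets me test freely against complex (generalized) eigenvectors.

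For the $Q(\R^n)$ statement: if $H\in Q(\R^n)$ and $a := \min_\lambda {\rm Re}\,\lambda > 0$, then $\|r^H\| \le \|r^S\|\,\|r^N\| \le C_1\, r^{a}\big(1 + |\ln r|^{n-1}\big) \to 0$ as $r\downarrow 0$, so $r^H x \to 0$ for every $x$. Conversely, if $r^H x \to 0$ for all $x$ but some eigenvalue $\lambda$ has ${\rm Re}\,\lambda \le 0$, I would pick a nonzero $v$ in the kernel of $N$ restricted to the generalized eigenspace $V_\lambda = \ker(S-\lambda I)$ — such $v$ exists because $N$ commutes with $S$ and hence preserves $V_\lambda$, on which it acts nilpotently — so that $r^H v = r^S r^N v = r^S v = r^\lambda v$ and $|r^H v| = r^{{\rm Re}\,\lambda}|v| \ge |v| > 0$ for all $0 < r \le 1$, contradicting $r^H v \to 0$. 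Hence $H\in Q(\R^n)$.

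For the $M(\R^n)$ statement I would decompose the space into the generalized eigenspaces $V_\lambda$. If $H\in M(\R^n)$, then on each $V_\lambda$ with ${\rm Re}\,\lambda > 0$ the norm of $r^H$ restricted to $V_\lambda$ tends to $0$ as above, while on each $V_\lambda$ with ${\rm Re}\,\lambda = 0$ the hypothesis that $\lambda$ is a simple root of the minimal polynomial forces $N|_{V_\lambda} = 0$ (equivalently, $V_\lambda$ coincides with the ordinary eigenspace), so the restriction of $r^H$ to $V_\lambda$ equals that of $r^S$ and has norm comparable to $r^{{\rm Re}\,\lambda} = 1$; hence $\sup_{0 < r \le 1}\|r^H\| < \infty$ and $\limsup_{r\downarrow 0}|r^H x| < \infty$ for every $x$. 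Conversely, suppose $\limsup_{r\downarrow 0}|r^H x| < \infty$ for all $x$. If some eigenvalue has ${\rm Re}\,\lambda < 0$, an eigenvector $v$ gives $|r^H v| = r^{{\rm Re}\,\lambda}|v| \to \infty$, a contradiction; and if some $\lambda$ with ${\rm Re}\,\lambda = 0$ is a multiple root of the minimal polynomial, then $N|_{V_\lambda} \ne 0$, so I would pick $v \in V_\lambda$ with $w := Nv \ne 0$ and $N^2 v = 0$, whence $r^H v = r^S r^N v = r^\lambda\big(v + (\ln r)\,w\big)$ and $|r^H v| = |v + (\ln r)\,w| \to \infty$ as $r\downarrow 0$, again a contradiction. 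Hence $H\in M(\R^n)$.

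The only genuinely delicate point I foresee is isolating the role of the minimal-polynomial clause in the definition of $M(\R^n)$: one has to see that ``${\rm Re}\,\lambda = 0$ is a simple root of the minimal polynomial of $H$'' says precisely that the nilpotent part $N$ vanishes on the generalized eigenspace $V_\lambda$, which is exactly what prevents a surviving factor $(\ln r)^j$ with $j \ge 1$ from destroying boundedness on that block. The remaining ingredients — the polynomial bound on $\|r^N\|$, the identity $|r^\lambda| = r^{{\rm Re}\,\lambda}$, and the real/complex reduction — are routine.
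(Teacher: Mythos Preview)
Your argument is correct and is the standard Jordan--block computation one would give for this fact. Note, however, that the paper does not supply its own proof: Lemma~\ref{s2-l1} is quoted verbatim from Sato~\cite[Lemma~2.6]{S91} and used as a black box, so there is no paper proof to compare against. Your proposal is thus a self-contained justification of a result the authors simply import; the Jordan--Chevalley decomposition $H=S+N$, the polynomial bound on $\|r^N\|$, and the identification of the minimal-polynomial condition with $N|_{V_\lambda}=0$ on the zero-real-part blocks are exactly the right ingredients, and your handling of the complexification issue (testing against complex generalized eigenvectors while $r^H$ remains a real matrix) is clean.
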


\begin{lem}\label{s2-l2}
\upshape{(\cite [Proposition 1]{S69})}
\newline\noindent
A probability measure $\mu$ on $\R^n$ is not full if and only
if there exists a vector $y\in \R^n \backslash\{0\}$ such
that $|\widehat{\mu}(cy)|=1$ for all $c\in \R$, where
$\widehat{\mu}$ is the characteristic function of $\mu$.
\end{lem}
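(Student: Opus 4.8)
The plan is to prove the two implications separately by reducing the full/non-full dichotomy to a one-dimensional statement about the pushforward of $\mu$ along a linear functional, and then invoking the elementary fact that a one-dimensional law whose characteristic function has modulus identically one must be a point mass.

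First I would dispose of the easy direction. Suppose $\mu$ is not full. Then its support is contained in a proper affine subspace, hence in some affine hyperplane of the form $\{x\in\R^n:\langle y,x\rangle=b\}$ with $y\neq 0$ and $b\in\R$. Since this hyperplane is closed, $\langle y,x\rangle=b$ for $\mu$-a.e.\ $x$, and therefore for every $c\in\R$
\[
\widehat{\mu}(cy)=\int_{\R^n}\e^{ic\langle y,x\rangle}\,\mu(\d x)=\e^{icb},
\]
so that $|\widehat{\mu}(cy)|=1$.

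For the converse, suppose $y\neq 0$ satisfies $|\widehat{\mu}(cy)|=1$ for all $c\in\R$. I would let $\nu$ denote the pushforward of $\mu$ under the map $x\mapsto\langle y,x\rangle$, a probability measure on $\R$, whose characteristic function is
\[
\widehat{\nu}(c)=\int_{\R}\e^{ict}\,\nu(\d t)=\int_{\R^n}\e^{ic\langle y,x\rangle}\,\mu(\d x)=\widehat{\mu}(cy),
\]
so that $|\widehat{\nu}(c)|=1$ for all $c\in\R$. The crux is to deduce that $\nu$ is a Dirac mass, and here I would use symmetrization: $|\widehat{\nu}(c)|^2=\widehat{\rho}(c)$, where $\rho$ is the law of $Y-Y'$ for two independent copies $Y,Y'$ of $\nu$. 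The hypothesis gives $\widehat{\rho}\equiv 1$, so by the uniqueness theorem for characteristic functions $\rho=\delta_0$, whence $Y=Y'$ a.s. Writing $\P(Y=Y')=\sum_t\nu(\{t\})^2$, the sum taken over atoms, and comparing with $\sum_t\nu(\{t\})\le 1$ forces a single atom of mass one, i.e.\ $\nu=\delta_b$ for some $b\in\R$. Consequently $\langle y,x\rangle=b$ for $\mu$-a.e.\ $x$, the support of $\mu$ lies in the proper hyperplane $\{x:\langle y,x\rangle=b\}$, and $\mu$ is not full.

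The hard part is precisely the one-dimensional degeneracy fact, and I expect the symmetrization step to be the cleanest route to it: passing to $|\widehat{\nu}|^2$ turns the modulus-one condition into the statement that the characteristic function of $Y-Y'$ is constantly $1$, after which the uniqueness theorem and the atom-counting inequality finish the argument immediately. Everything else is routine bookkeeping once the problem has been projected onto the functional $\langle y,\cdot\rangle$, since support-in-a-closed-set and full-measure statements are interchangeable for the closed hyperplane involved.
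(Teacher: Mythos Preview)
The paper does not give a proof of this lemma; it is simply quoted as Proposition~1 of Sharpe~\cite{S69} and used as a black box. Your argument is correct and supplies a self-contained proof: the forward direction is immediate, and for the converse the symmetrization trick (passing from $|\widehat{\nu}|\equiv 1$ to $\widehat{\rho}\equiv 1$ for the law $\rho$ of $Y-Y'$) together with the atom-counting identity $\P(Y=Y')=\sum_t\nu(\{t\})^2$ is the standard clean route to force $\nu$ to be a Dirac mass.
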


For $r>0$ and $E\in Q(\R^d)$ fixed, define $G_r$ to be the set
of $A\in \text{Aut}(\R^m)$ such that
$\{X(r^Et),\, t \in \R^d\}\stackrel{d}{=}\{AX(t)+ b(t),
\, t \in \R^d\},$ for some function $b:
\R^d \to \R^m$. Let $G=\bigcup_{r>0}G_r.$

\begin{lem}\label{s3-l1}
The set $G$ is a subgroup of $\text{Aut}(\R^m)$. In particular,
the identity matrix $I\in G_1$; $A\in G_r$ implies $A^{-1}\in
G_{1/r}$; $A\in G_r$ and $B\in G_s$ imply $AB\in G_{sr}$.
\end{lem}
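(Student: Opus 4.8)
The plan is to verify the three itemized properties one at a time, directly from the definition of $G_r$, and then observe that they yield the subgroup statement for free: each $G_r$ lies in $\text{Aut}(\R^m)$ by definition, $G$ is nonempty once $I\in G_1$ is established, and closure under inverses and products is precisely the content of the last two items. The only tool needed is the elementary principle that equality of all finite-dimensional distributions of two $\R^m$-valued random fields on $\R^d$ is preserved (i) when the parameter is reindexed by a bijection of $\R^d$, and (ii) when, separately at each parameter value $t$, a fixed deterministic Borel map $\R^m\to\R^m$ is applied to the state variable. I would state this once and invoke it repeatedly. As for $I\in G_1$: since $1^E$ is the identity operator on $\R^d$, we have $\{X(1^E t),\,t\in\R^d\}=\{X(t),\,t\in\R^d\}=\{IX(t)+b(t),\,t\in\R^d\}$ with $b\equiv 0$, so $I\in G_1$.

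For the inverse property, suppose $A\in G_r$, i.e.\ $\{X(r^E t),\,t\}\stackrel{d}{=}\{AX(t)+b(t),\,t\}$ for some $b: \R^d\to\R^m$. Reindexing both fields by the bijection $t\mapsto r^{-E}t$ of $\R^d$ gives $\{X(t),\,t\}\stackrel{d}{=}\{AX(r^{-E}t)+b(r^{-E}t),\,t\}$. Applying, at each index $t$, the deterministic affine map $w\mapsto A^{-1}w-A^{-1}b(r^{-E}t)$ — legitimate since $A\in\text{Aut}(\R^m)$ — we obtain $\{A^{-1}X(t)-A^{-1}b(r^{-E}t),\,t\}\stackrel{d}{=}\{X(r^{-E}t),\,t\}$. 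Since $(1/r)^E=r^{-E}$, this is exactly the statement that $A^{-1}\in G_{1/r}$, with associated function $\widetilde b(t)=-A^{-1}b(r^{-E}t)$.

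For the product property, let $A\in G_r$ and $B\in G_s$, say $\{X(r^E t),\,t\}\stackrel{d}{=}\{AX(t)+b(t),\,t\}$ and $\{X(s^E t),\,t\}\stackrel{d}{=}\{BX(t)+c(t),\,t\}$. Since $r^E$ and $s^E$ are exponentials of scalar multiples of the single matrix $E$, they commute and $r^E s^E=s^E r^E=(rs)^E$. Reindexing the first relation by $t\mapsto s^E t$ yields $\{X(r^E s^E t),\,t\}\stackrel{d}{=}\{AX(s^E t)+b(s^E t),\,t\}$, while applying the affine map $w\mapsto Aw+b(s^E t)$ at each index $t$ to the second relation yields $\{AX(s^E t)+b(s^E t),\,t\}\stackrel{d}{=}\{ABX(t)+Ac(t)+b(s^E t),\,t\}$. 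Chaining these two displays and using $r^E s^E=(rs)^E$ gives $\{X((rs)^E t),\,t\}\stackrel{d}{=}\{ABX(t)+d(t),\,t\}$ with $d(t)=Ac(t)+b(s^E t)$; since $AB\in\text{Aut}(\R^m)$, this says $AB\in G_{sr}$. Together with the first two items, $G$ is therefore a subgroup of $\text{Aut}(\R^m)$.

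There is no genuine obstacle here — the lemma is essentially bookkeeping — but two small points deserve attention. First, one should verify carefully the preservation principle quoted above; it is immediate from the definition of equality of finite-dimensional distributions, since applying a deterministic map coordinate-by-coordinate is a deterministic transformation of the whole finite-dimensional vector, which pushes forward equal laws to equal laws. Second, one must keep the order of the $t$-dependent translations straight when composing the affine maps, which is why the associated functions $\widetilde b$ and $d$ are written out explicitly above.
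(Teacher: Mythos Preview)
Your proof is correct and is precisely the elementary verification the paper has in mind; the paper itself simply writes ``This can be verified by using the above definition and the proof is elementary. We omit the details here.'' Your write-up supplies exactly those omitted details, including the explicit associated functions $\widetilde b$ and $d$, and the only ingredients used are the ones the paper would have used had it spelled things out.
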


\proof. This can be verified by using the above definition
and the proof is elementary. We omit the details here. \qed

\begin{lem}\label{s3-l2}
The following statements are equivalent:
\newline{\rm (1)} There exist a sequence $\{r_n, n \ge 1\}$ with
$r_n\downarrow 0$ and $A_n\in G_{r_n}$
such that $A_n$ tends to $A\in\text{Aut}(\R^m)$.
\newline{\rm (2)} $\{X(t), \,  t \in \R^d\}\stackrel{d}{=}
\{X(0)+\phi(t), \,  t \in \R^d\}$,
where $\phi$ is unique and continuous on $\R^d$.
\newline{\rm (3)} $G=G_s$ for all $s>0$.
\newline{\rm (4)} $G_s\cap G_r\not=\emptyset$ for some distinct
$s,\, r > 0$.
\end{lem}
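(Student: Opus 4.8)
The plan is to establish the cycle of implications $(1)\Rightarrow(2)\Rightarrow(3)\Rightarrow(4)\Rightarrow(1)$, using the cocycle relations \eqref{Eq:Con1}--\eqref{Eq:Con2} (recast through Lemma \ref{s3-l1}) together with Lemma \ref{s2-l1} to pass to limits. For $(1)\Rightarrow(2)$: given $r_n\downarrow 0$ and $A_n\in G_{r_n}$ with $A_n\to A\in\text{Aut}(\R^m)$, by definition there are functions $b_n$ with $\{X(r_n^E t)\}\stackrel{d}{=}\{A_n X(t)+b_n(t)\}$. Since $E\in Q(\R^d)$, Lemma \ref{s2-l1} gives $r_n^E t\to 0$ for each $t$, so stochastic continuity yields $X(r_n^E t)\to X(0)$ in probability; hence the left-hand side converges in distribution (finite-dimensionally) to the degenerate-in-the-sense-of-a-single-point law of $X(0)$ evaluated along the index set. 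On the right-hand side, since $A_n\to A$ is invertible and $X$ is proper, one argues (as in \cite[Lemma 2.6 ff.]{S91} or \cite{HM82}) that $b_n(t)$ must converge to some $\phi(t)$ for each $t$, giving $\{X(0)\}\stackrel{d}{=}\{AX(t)+\phi(t)\}$ as finite-dimensional distributions; equivalently $\{X(t)\}\stackrel{d}{=}\{A^{-1}X(0)-A^{-1}\phi(t)\}$, which has the asserted form after renaming. Uniqueness and continuity of the resulting function follow because $X$ is proper (so the additive term is determined by the one-dimensional marginals) and because $X$ is stochastically continuous (so $t\mapsto$ the location parameter is continuous); this is exactly the kind of argument packaged in Lemma 2.4 of \cite{S91} invoked for Corollary \ref{s2-c1}.

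For $(2)\Rightarrow(3)$: if $\{X(t)\}\stackrel{d}{=}\{X(0)+\phi(t)\}$, then for \emph{every} $s>0$ and every invertible $A$ one computes $\{X(s^E t)\}\stackrel{d}{=}\{X(0)+\phi(s^E t)\}\stackrel{d}{=}\{A(X(t)-\phi(t))+\phi(s^E t)\}=\{AX(t)+(\phi(s^Et)-A\phi(t))\}$, so $A\in G_s$. Hence $G_s=\text{Aut}(\R^m)$ for all $s$, and since $G=\bigcup_{r>0}G_r\subseteq\text{Aut}(\R^m)$ by Lemma \ref{s3-l1}, we get $G=G_s$ for all $s>0$. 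The implication $(3)\Rightarrow(4)$ is immediate: $G=G_s=G_r\neq\emptyset$ for any two distinct $s,r>0$ (nonempty since $I\in G_1\subseteq G$ and, by $(3)$, $I\in G_s$ for all $s$).

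The substantive step is $(4)\Rightarrow(1)$. Suppose $A\in G_s\cap G_r$ with $s\neq r$, say $s<r$. Using Lemma \ref{s3-l1}: $A\in G_s$ gives $A^{-1}\in G_{1/s}$, and combined with $A\in G_r$ we get $I=A^{-1}A\in G_{r/s}$ with $r/s>1$. Iterating, $I\in G_{(r/s)^n}$ for all $n\ge 1$, and taking inverses $I\in G_{(s/r)^n}$ with $(s/r)^n\downarrow 0$. Thus setting $r_n=(s/r)^n$ and $A_n=I$ we have $r_n\downarrow 0$, $A_n\in G_{r_n}$, and $A_n\to I\in\text{Aut}(\R^m)$, which is precisely statement $(1)$. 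I expect the main obstacle to be the rigorous justification, in $(1)\Rightarrow(2)$, that the translation terms $b_n(t)$ actually converge — one must rule out escape to infinity, which is where properness of $X$ (via Lemma \ref{s2-l2}, controlling characteristic functions) and the fact that $A_n$ converges to an \emph{invertible} limit are both essential; this is the place where the arguments of Hudson--Mason \cite{HM82} and Sato \cite{S91} do the real work.
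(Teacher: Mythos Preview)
Your implications $(1)\Rightarrow(2)$, $(3)\Rightarrow(4)$, and $(4)\Rightarrow(1)$ are essentially the paper's arguments (the paper shows $I\in G_{s/r}$ directly rather than via $G_{r/s}$, but this is cosmetic).

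The step $(2)\Rightarrow(3)$ contains a genuine error. You claim that for \emph{every} invertible $A$,
\[
\{X(0)+\phi(s^Et)\}_t \stackrel{d}{=} \{A(X(t)-\phi(t))+\phi(s^Et)\}_t,
\]
and hence $G_s=\text{Aut}(\R^m)$. But under (2) the process $\{X(t)-\phi(t)\}_t$ has the same finite-dimensional distributions as the constant process $\{X(0)\}_t$, so $\{A(X(t)-\phi(t))\}_t \stackrel{d}{=}\{AX(0)\}_t$. Your displayed identity therefore requires $X(0)\stackrel{d}{=}AX(0)$, which fails for a generic $A\in\text{Aut}(\R^m)$ since $X(0)$ is full (e.g.\ if $X(0)$ is standard Gaussian, only orthogonal $A$ work). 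So $G_s$ need not be all of $\text{Aut}(\R^m)$.

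The fix, which is what the paper does, is not to compute $G_s$ but to show it does not depend on $s$: take any $A\in G_r$, use (2) to write both $\{X(r^Et)\}$ and $\{X(s^Et)\}$ as $\{X(0)+\phi(\cdot)\}$, and combine to obtain
\[
\{X(s^Et)\}\stackrel{d}{=}\{AX(t)+b_r(t)-\phi(r^Et)+\phi(s^Et)\},
\]
whence $A\in G_s$. This gives $G_r\subseteq G_s$ for all $r,s>0$, hence $G=G_s$. Equivalently, under (2) the condition $A\in G_s$ reduces to ``$AX(0)\stackrel{d}{=}X(0)+c$ for some $c\in\R^m$'', which is manifestly independent of $s$; either formulation repairs your argument.
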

\proof. (1)$\Rightarrow$(2). Assume (1) holds then we have
that
$\{X(r_n^Et), \,  t \in \R^d\} \stackrel{d}{=}\{A_nX(t)
+b_{r_n}(t), \, \,  t \in \R^d\}$. By Lemma
\ref{s2-l1} and the stochastic continuity of $X$, we
derive that there is a function $b(t)$ such that
$\{X(t), \,  t \in \R^d\}\stackrel{d}{=}
\{A^{-1}X(0)-A^{-1}b(t),  \, t \in \R^d\}$ and,
in  particular,
$X(0)\stackrel{d}{=}A^{-1}X(0)-A^{-1}b(0)$.
This yields (2) with $\phi(t)=A^{-1}b(0)-A^{-1}b(t)$. The
continuity of $\phi$ follows from the stochastic
continuity of $X$ and the uniqueness of $\phi$
follows from Lemma 2.4 in \cite{S91}.

(2)$\Rightarrow$(3) Suppose (2) holds and $A\in G_r$.
Then
$$
\{X(0)+\phi(r^Et), \,  t \in \R^d\}\stackrel{d}
{=}\{X(r^Et), \,  t \in \R^d\}\stackrel{d}
{=}\{AX(t)+b_r(t), \,  t \in \R^d\}.
$$
Hence for all positive numbers $s\not=r$,
\[
\begin{split}
\{X(s^Et), \,  t \in \R^d\}&\stackrel{d}
{=} \{X(0)+\phi(s^Et), \, t \in \R^d\}\\
&\stackrel{d}{=} \{AX(t)+b_r(t)-
\phi(r^Et)+\phi(s^Et), \, t \in \R^d\}.
\end{split}
\]
Thus $A \in G_s$, which shows $G_r\subset G_s$.
By symmetry, we also have
$G_s\subset G_r$. Therefore $G_r=G_s$ for
all $s\not=r$, and hence $G_r=G$.

(3)$\Rightarrow$(4) This is obvious.

(4)$\Rightarrow$(1) Now we assume (4) holds for some $s<r$.
Let $A\in G_s\cap G_r$. Since
$\{X(s^E t), \,  t \in \R^d\}\stackrel{d}{=}\{AX(t)+b_s(t),
\,  t \in \R^d\}$   and
$\{X(r^Et), \,  t \in \R^d\}\stackrel{d}{=}
\{AX(t)+b_r(t), \,  t \in \R^d\},
$
we obtain that
$\{X(s^Et), \, t \in \R^d\}\stackrel{d}{=}
\{X(r^Et)+\psi(t), \, t \in \R^d\}$ for some function
$\psi: \R^d\to \R^m$. Then
\beqlb\label{s3-l2-1}
\{X((s/r)^Et), \,  t \in \R^d\}\stackrel{d}{=}
\{X(t)+\psi(r^{-E}t), \,  t \in \R^d\}.
 \eeqlb
This shows that $I\in G_{s/r}$.  Let $c_n=(s/r)^n$.
By iterating (\ref{s3-l2-1}) we derive that
$$
 \{X(c_n^Et), \,  t \in \R^d\}\stackrel{d}{=}
 \{X(t)+\psi_n(t), \,  t \in \R^d\},
 $$
where
$\psi_n(t)=\sum\limits_{i=0}^{n-1}\psi(c_i^Er^{-E}t)$.
Hence $I\in G_{c_n}$ for all $n\geq0$. Since $c_n\to 0$
and $I\in$ Aut$(\R^m)$, we arrive at (1). \qed

\begin{lem}\label{s3-l3}
Assume $G\not= G_s$ for some $s>0$. If $A_n\in G_{r_n}$, $A\in
Aut(R^m)$ and $A_n\to A$ as $n\to \infty$, then the sequence $\{r_n\}$
converges to some $r>0$ as $n\to \infty$ and $A\in G_r$.
\end{lem}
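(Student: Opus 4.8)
\textbf{Proof proposal for Lemma \ref{s3-l3}.}

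The plan is to argue by contradiction using the equivalences established in Lemma \ref{s3-l2}, whose negation (under the hypothesis $G\neq G_s$) tells us that no sequence $r_n\downarrow 0$ can carry operators $A_n\in G_{r_n}$ converging into $\mathrm{Aut}(\R^m)$; in fact condition (4) fails as well, so the sets $G_r$ are pairwise disjoint for distinct values of $r$. First I would rule out that a subsequence of $\{r_n\}$ tends to $0$: if $r_{n_k}\downarrow 0$, then since $A_{n_k}\in G_{r_{n_k}}$ and $A_{n_k}\to A\in\mathrm{Aut}(\R^m)$, statement (1) of Lemma \ref{s3-l2} would hold, forcing (3), i.e. $G=G_s$ for all $s$, contradicting the hypothesis. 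Next I would rule out that a subsequence tends to $+\infty$: if $r_{n_k}\to\infty$, set $r_{n_k}' = 1/r_{n_k}\downarrow 0$ and recall from Lemma \ref{s3-l1} that $A_{n_k}\in G_{r_{n_k}}$ implies $A_{n_k}^{-1}\in G_{r_{n_k}'}$; since $A_{n_k}\to A\in\mathrm{Aut}(\R^m)$ we have $A_{n_k}^{-1}\to A^{-1}\in\mathrm{Aut}(\R^m)$, and again statement (1) of Lemma \ref{s3-l2} applies with the sequence $r_{n_k}'\downarrow 0$, giving the same contradiction. Hence $\{r_n\}$ stays in a compact subset of $(0,\infty)$ away from both $0$ and $\infty$.

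Now every subsequence of $\{r_n\}$ has a further subsequence converging to some $r\in(0,\infty)$. I would show first that the limit $r$ is the same for every convergent subsequence, so that the whole sequence $\{r_n\}$ converges. Suppose two subsequences converge to $r$ and $r'$ respectively with $r\neq r'$, and along each we have $A_{n}\to A\in\mathrm{Aut}(\R^m)$. The goal is to deduce $A\in G_r\cap G_{r'}$, which by disjointness (the failure of (4) in Lemma \ref{s3-l2}) is impossible. To get $A\in G_r$ from $A_{n_k}\in G_{r_{n_k}}$ with $r_{n_k}\to r$, I would use the relation $\{X(r_{n_k}^E t)\}\stackrel{d}{=}\{A_{n_k}X(t)+b_{r_{n_k}}(t)\}$: by stochastic continuity of $X$ the left-hand fields converge in finite-dimensional distributions to $\{X(r^E t)\}$, and since $A_{n_k}\to A$ the fields $\{A_{n_k}X(t)\}$ converge in distribution to $\{AX(t)\}$; a standard argument (as in \cite{HM82}, using properness of $X$ and the convergence of types theorem / Lemma \ref{s2-l2}) shows the translation parts $b_{r_{n_k}}(t)$ must also converge to some $b(t)$, yielding $\{X(r^E t)\}\stackrel{d}{=}\{AX(t)+b(t)\}$, i.e. $A\in G_r$. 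The same reasoning along the other subsequence gives $A\in G_{r'}$, the desired contradiction. Therefore $r_n\to r$ for a unique $r>0$, and the argument just given (applied to the full sequence) simultaneously shows $A\in G_r$.

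The main obstacle is the middle step: extracting convergence of the translation functions $b_{r_n}(t)$ from convergence in distribution of $\{X(r_n^E t)\}$ and of the linear parts $A_n X(t)$. This is exactly where properness of $X$ is essential — it guarantees, via Lemma \ref{s2-l2} and the convergence-of-types theorem, that a sequence of affine images $A_n X(t)+b_n(t)$ of a fixed full distribution can converge only if both the linear and the translation parts converge. One must apply this at each fixed $t$ (or at finitely many $t$'s simultaneously) and check that the resulting limit function $b(t)$ is consistent; the boundedness of $\{r_n\}$ away from $0$ and $\infty$, established in the first paragraph, is what makes $\{X(r_n^E t)\}$ a relatively compact family with the right limit. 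The rest of the proof is bookkeeping with Lemmas \ref{s3-l1} and \ref{s3-l2}.
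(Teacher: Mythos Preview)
Your proposal is correct and follows essentially the same route as the paper: rule out subsequential limits $0$ and $\infty$ via Lemma \ref{s3-l2} (and Lemma \ref{s3-l1} for the inverse trick), then use stochastic continuity together with convergence of the linear parts to place $A$ in $G_r$ for any subsequential limit $r$, and finally invoke the disjointness of the $G_r$'s to force uniqueness of $r$. The paper is terser about the convergence of the translation terms $b_{r_{n_k}}(t)$---it simply says ``by the stochastic continuity of $X$''---whereas you correctly spell out that properness and a convergence-of-types argument are what make this work; your added detail is accurate and does not depart from the paper's argument.
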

\begin{proof}.
Suppose that $\{r_{n_k}\}$ is a subsequence of $\{r_n\}$ and that
$\{r_{n_k}\}$ converges to some $r\in[0, \, \infty]$. Then $0<r< \infty$.
In fact if $r=0$, then $A_{n_k}\to A$ and Lemma
\ref{s3-l2} imply $G=G_s$ for all $s>0$, which is a
contradiction to the assumption. On the other hand,
if $r\to\infty$, then $A^{-1}_{n_k}\in
G_{r_{n_k}^{-1}}\to A^{-1}$ and $r_{n_k}^{-1}\to 0$. By Lemma
\ref{s3-l2}, we also get a contradiction. It follows from
 $$\{X(r_{n_k}^E t), \,  t \in \R^d\}\stackrel{d}
 {=}\{A_{n_k}X(t)+b_{r_{n_k}}(t), \,  t \in \R^d\}$$
and the stochastic continuity of $X$ that
 $$\{X(r^E t), \,  t \in \R^d\}\stackrel{d}
 {=}\{AX(t)+b_r(t), \,  t \in \R^d\}$$
for some function $b_r$. Therefore $A\in G_r$ and
hence from Lemma \ref{s3-l2} we infer that all convergent
subsequences of $\{r_n\}$ have the same limit $r$.
Consequently, $\{r_n\}$ converges to
$r>0$.\qed
\end{proof}

From Lemma \ref{s3-l2} and Lemma \ref{s3-l3}, we
derive the following result.
\begin{cor}\label{s3-c1}
If $G\not= G_s$ for some $s>0$, then $G_1$ is not
a neighborhood of $I$ in $G$.
\end{cor}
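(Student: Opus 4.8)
The plan is to argue by contradiction: assume that $G \ne G_s$ for some $s>0$ and yet $G_1$ \emph{is} a neighborhood of $I$ in $G$, and derive a violation of Lemma \ref{s3-l3}. The point of Lemma \ref{s3-l3} under the standing hypothesis $G\ne G_s$ is that the assignment $A \mapsto r$ (where $A\in G_r$) is well defined and \emph{continuous} in the following sense: whenever $A_n\in G_{r_n}$ and $A_n\to A$, the parameters $r_n$ converge to the unique $r$ with $A\in G_r$. In particular every $A\in G$ lies in exactly one $G_r$ (if $A\in G_r\cap G_{r'}$ then $G_r\cap G_{r'}\ne\emptyset$, so by Lemma \ref{s3-l2}(4)$\Rightarrow$(3) we would get $G=G_s$ for all $s$, a contradiction), so we have a genuine map $\rho\colon G\to(0,\infty)$, $\rho(A)=r$ iff $A\in G_r$, and Lemma \ref{s3-l3} says $\rho$ is sequentially continuous, hence continuous.

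Now suppose $G_1$ is a neighborhood of $I$ in $G$. First I would note that $G_1$ is in fact a subgroup of $G$: by Lemma \ref{s3-l1}, $I\in G_1$, $A\in G_1\Rightarrow A^{-1}\in G_{1/1}=G_1$, and $A,B\in G_1\Rightarrow AB\in G_{1\cdot 1}=G_1$. So $G_1 = \rho^{-1}(\{1\})$ is a subgroup of $G$ which contains a $G$-neighborhood of its identity element $I$. A standard fact about topological groups is that a subgroup containing a neighborhood of the identity is open; hence $G_1$ is open in $G$. But then, since $\rho$ is continuous, pick any $s\ne 1$ with $G_s\ne\emptyset$ — such an $s$ exists because $G=\bigcup_{r>0}G_r$ and not all $G_r$ with $r\ne 1$ can be empty (otherwise $G=G_1$, contradicting $G\ne G_s$ for the given $s$; more directly, for any $A\in G_1$ the relation $\{X(s^E t)\}\stackrel{d}{=}\{A' X(t)+b(t)\}$ for suitable $A'$ forces $G_s\ne\emptyset$ for every $s$). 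Actually the cleanest route: take any $s>0$ with $G\ne G_s$; then $G_s$ is nonempty (it is one of the pieces of the union $G$, and if it were empty we could still invoke Lemma \ref{s3-l1} to see $G_s\ne\emptyset$ since $G_1\ne\emptyset$ and composing with an element of $G_s$... ) — let me instead use connectedness.

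Here is the step I expect to be the crux, and I would organize the final write-up around it. The map $r\mapsto r^E$ is continuous, and heuristically $G$ carries a topology in which $\{G_r\}_{r>0}$ is a partition into relatively open-and-closed pieces once $G_1$ is open: indeed $G_r = A G_1$ for any $A\in G_r$ (by Lemma \ref{s3-l1}, left translation by $A$ is a homeomorphism of $G$ sending $G_1$ onto $G_r$), so each $G_r$ is open, hence each $G_r$ is also closed (its complement $\bigcup_{r'\ne r}G_{r'}$ is open). Thus $\rho\colon G\to(0,\infty)$ is locally constant. But Lemma \ref{s3-l3} combined with Lemma \ref{s3-l2}(1)$\Leftrightarrow$(3) shows $\rho$ cannot be locally constant unless $G=G_s$: concretely, because $E\in Q(\R^d)$, for $c_n=(s/r)^n\to 0$ one produces (exactly as in the proof of (4)$\Rightarrow$(1) in Lemma \ref{s3-l2}) operators $I\in G_{c_n}$, i.e. $I\in G_{c_n}\cap G_1$ would already force $G=G_s$; short of that, the existence of $A_n\in G_{r_n}$ with $r_n\ne 1$, $r_n\to 1$ and $A_n\to I$ would contradict local constancy of $\rho$ at $I$. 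So the main obstacle is to exhibit such a sequence $A_n\to I$ with $A_n\in G_{r_n}$, $r_n\to 1$, $r_n\ne 1$; if no such sequence exists, then $I$ is \emph{isolated} in the sense that $G_1$ is a neighborhood of $I$ giving no contradiction directly — which is why we must instead derive the contradiction from Lemma \ref{s3-l3} itself: the hypothesis $G\ne G_s$ makes $\rho$ injective-on-fibers and continuous, and "$G_1$ a neighborhood of $I$'' forces $G_1$ open, forces every $G_r$ open, forces $\rho$ locally constant, which together with $G$ being (path-)connected through $r\mapsto$ (the equivalence class data) forces $\rho$ constant, i.e. $G=G_1$, contradicting $G\ne G_s$. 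I would therefore spend the bulk of the argument verifying: (a) $G_1$ open $\Rightarrow$ each $G_r=AG_1$ open and closed; (b) $\rho$ continuous by Lemma \ref{s3-l3}; (c) a connectedness or sequential argument (the $c_n=(s/r)^n$ iteration of Lemma \ref{s3-l2}) that rules out $\rho$ being nonconstant, yielding $G=G_s$ and the desired contradiction. \qed
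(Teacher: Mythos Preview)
Your contradiction framework is set up correctly: under the hypothesis $G\ne G_s$, the sets $G_r$ are pairwise disjoint (Lemma~\ref{s3-l2}), $G_1$ is a subgroup, and if $G_1$ were a neighborhood of $I$ then each $G_r=AG_1$ would be open and closed in $G$. The gap is in your step~(c). Neither of the two closing arguments you suggest actually goes through. First, you invoke connectedness of $G$, but nothing in the setup guarantees that $G\subset\text{Aut}(\R^m)$ is connected; the partition of $G$ into clopen fibers $G_r$ is perfectly consistent with $G$ being disconnected, so ``$\rho$ locally constant plus $G$ connected $\Rightarrow$ $\rho$ constant'' is not available. Second, the $c_n=(s/r)^n$ iteration from Lemma~\ref{s3-l2} is the proof of (4)$\Rightarrow$(1), which \emph{starts} from $G_s\cap G_r\ne\emptyset$ for distinct $s,r$; under your standing assumption $G\ne G_s$ that intersection is empty, so the iteration never gets off the ground. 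You yourself flagged the real ``main obstacle'': producing $A_n\to I$ with $A_n\in G_{r_n}$, $r_n\ne 1$, $r_n\to 1$. Your proposal circles around this but does not do it.

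The paper meets that obstacle head-on and the missing ingredient is the convergence of types theorem. Pick any $r_n\ne 1$ with $r_n\to 1$ and any $B_n\in G_{r_n}$. Because $X$ is proper and stochastically continuous, convergence of types forces $\{B_n\}$ to be pre-compact in $\text{Aut}(\R^m)$; extract $B_{n_k}\to B\in\text{Aut}(\R^m)$. Lemma~\ref{s3-l3} gives $B\in G_1$, hence $B^{-1}\in G_1$, and then $A_k:=B^{-1}B_{n_k}\in G_{r_{n_k}}$ with $A_k\to I$ and $r_{n_k}\ne 1$. This exhibits elements of $G\setminus G_1$ arbitrarily close to $I$, so $G_1$ is not a neighborhood of $I$. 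No connectedness of $G$ is needed.
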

\begin{proof}. From Lemma \ref{s3-l2}, the assumption that
$G\not= G_s$ for some $s>0$ implies $G_r\cap G_1=\emptyset$
for all $r\not=1$. Therefore, to prove the corollary, it
is enough to show that there exists a sequence $A_n\in
G_{r_n}$ such that $r_n\not=1$ and $A_n\to I$ as $n\to
 \infty$. This can be proved as follows.

Let $\{r_n\}$ be a sequence with $r_n\not=1$ and $r_n\to1$ as
$n\to \infty$. Take $B_n\in G_{r_n}$. Then by the convergence
of types theorem (see, e.g., \cite[p.55]{S69}), $\{B_n\}$ is
pre-compact in Aut$(\R^m)$. Hence we can find a subsequence
$\{B_{n_k}\}$ such that $B_{n_k}\to B\in$ Aut$(\R^m)$. By
Lemma \ref{s3-l3}, we have $B\in G_1$ and thus $B^{-1}\in G_1$.
Furthermore, by Lemma \ref{s3-l1}, $G_{r_{n_k}}\ni B^{-1}B_{n_k}
\to I\in G_1$. Let $A_k = B^{-1}B_{n_k}$, then the sequence
$\{A_k\}_k$ is what we need.\qed
\end{proof}

Using the above results, we give the proof of Theorem
\ref{s2-t1} as follows.

\medskip

\begin{proof}\ {\it of Theorem \ref{s2-t1}}.\, From Lemma \ref{s3-l2},
we only need to consider two cases.

{\bf Case 1}: $G=G_s$ for all $s>0$.  By Part (2) of Lemma \ref{s3-l2},
we derive that for all constant $c > 0$,
\[
\begin{split}
\big\{X(r^E t),\, t \in \R^d\big\}&\stackrel{d}{=} \{X(0) +\phi(r^E t),\,
t \in \R^d\big\}\\
&\stackrel{d}{=} \{X(t) +\phi(r^E t) -\phi(t),\, t \in \R^d\big\}.
\end{split}
\]
Hence (\ref{s2-t1-1}) holds with $D = {\bf 0}$, which is the matrix
with all entries equal 0, and $b_r(t) = \phi(r^E t) -\phi(t)$.

{\bf Case 2}: $\{G_s, s>0\}$ is a disjoint family. In this case,
$G$ is a closed subgroup of Aut$(\R^m)$. Define $\eta$: $G\to\R$
by $\eta(A)=\ln s$ if $A\in G_s$. It is well-defined and, from
Lemma \ref{s3-l1} and Lemma \ref{s3-l3}, is a continuous
homomorphism between the group $G$ and the group $(\R, +)$. Let
$T(G)$ be the tangent space to $G$ at the identity $I$. It is
well-known that the image of $T(G)$ under the exponential map is a
neighborhood of the identity of $G$; see \cite[p.110.]{C65}.
Therefore, by Corollary \ref{s3-c1}, there exists $A\in T(G)$ such
that $\e^A\not\in G_1$. Furthermore, by the same arguments used in
the proof of Theorem 2.1 of \cite[p.288]{HM82}, we know there is a
$D\in$ End$(\R^m)$ such that $s^D\in G_s$ for every $s>0$. This
implies that \beqlb\label{s3-t1-1} \{X(r^Et), \, t \in
\R^d\}\stackrel{d}{=}\{r^DX(t)+b_r(t), \, t \in \R^d\}. \eeqlb for
some function $b_r(t)$. Note that the linear operators $r^E$ and $r^D$
are continuous on $r\in(0, \, \infty)$. By the convergence of types theorem,
it is not hard to see that $b_r(t)$ is continuous in $(r, t)\in
(0,\, \infty)\times\R^d$. In order to verify the fact $D\in
M(\R^m)$, we let $\{X_0(t), t \in \R^d\}$ be the symmetrization of
$\{X(t), t \in \R^d\}$ and let $\mu(t)$ be the distribution of
$X_0(t)$. Then by (\ref{s3-t1-1})
$$\mu(r^Et)=r^D\mu(t),$$
for all $r>0$ and $t\in\R^d$. Therefore, the characteristic
function of $\mu(t)$, denoted by $\widehat{\mu}_t(z)\, (z\in\R^m)$,
satisfies
\beqlb\label{s3-t1-2}
\widehat{\mu}_{r^Et}(z)=\widehat{\mu}_t(r^{D^*}z)
\eeqlb
for every $r>0$ and $t\in\R^d$, where $D^*$ is the adjoint of $D$.
Suppose $D\not\in M(\R^m)$,  then $D^*\not\in M(R^m)$ either. By
Lemma \ref{s2-l1}, we can find $r_n\to 0$ and $z_0 \in \R^m$ such
that $|r_n^{D^*}z_0|\to \infty$. Let $\alpha_n =|r_n^{D^*}z_0|^{-1}$.
Then by choosing a subsequence if necessary, we have that
$\alpha_nr_n^{D^*}z_0$ converges to some $z_1\in\R^m$ with
$|z_1|=1$. From (\ref{s3-t1-2}), it follows that for all $c \in
\R$
\beqlb\label{s3-t1-3} \widehat{\mu}_{r_n^E t}(c\,\alpha_n
z_0)=\widehat{\mu}_t(c\,\alpha_nr_n^{D^*}z_0).
\eeqlb
Letting $n\to\infty$, since Lemma \ref{s2-l1} implies $r_n^E t
\to 0$,  by the continuity of $\widehat{\mu}_t(\cdot)$, we have that
$\widehat{\mu}_t(cz_1)=\widehat{\mu}_0(0)=1$ for all $c \in \R$. It
follows from Lemma \ref{s2-l2} that $X(t)$ is not full in $\R^m$.
This contradicts the hypothesis that $X$ is proper. Consequently, the
matrix $D$ in (\ref{s3-t1-1}) belongs to $ M(\R^m)$ and the function $b_r(t)$
is continuous in $(0,\, \infty)\times\R^d$.

Now we prove that $X(0)= a$  a.s. for some constant vector  $a \in \R^m$)
if and only if $D\in Q(\R^m)$. From Lemma \ref{s2-l1},
it can be shown that, if $X$ is a stochastically continuous
w.o.s.s. random field and $D\in Q(\R^m)$, then
$X(0)=const$, a.s. Considering the converse assertion,
we note that, in this case, the symmetrization of
$\{X(t), t \in \R^d\}$, i.e. $\{X_0(t), t \in \R^d\}$,
satisfies $X_0(0)=0$ a.s. If $D\not\in Q(\R^m)$, then by Lemma
\ref{s2-l1}, we can find $r_n\to0$ and $z_0$ such that
$|r_n^{D^*}z_0|$ does not converge to $0$. Let
$\alpha_n=|r_n^{D^*}z_0|^{-1}$.  Then choosing a subsequence if
necessary, by the fact  $D\in M(\R^m)$, we have that $\alpha_n$
converges to a finite $\alpha>0$ and that $\alpha_nr_n^{D^*}z_0$
converges to some $z_1\in\R^m$ with $|z_1|=1$. By using
(\ref{s2-t1-1}) and the same argument as that leads to
(\ref{s3-t1-2}) and (\ref{s3-t1-3}) we derive
\beqlb\label{s3-t1-4}
\widehat{\mu}_{r_n^E t}(c\, \alpha_n
z_0)=\widehat{\mu}_t(c\, \alpha_n r_n^{D^*}z_0)
\eeqlb
for all $c \in \R$. Letting $n\to\infty$, we have that
$\widehat{\mu}_t(c z_1)=\widehat{\mu}_0(c \alpha z_0)=1.$
Then by Lemma \ref{s2-l2}, $X(t)$ is not full in $\R^m$.
This contradiction implies $D\in Q(\R^m)$.

The last assertion follows from the stochastic continuity of $X$
and (\ref{s2-t1-1}). This finishes the proof of Theorem \ref{s2-t1}.\qed
\end{proof}

\medskip
\begin{proof}\ {\it of Corollary \ref{s2-c2}}.
For every $t \in \R^d \backslash \{0\}$ we use polar coordinate decomposition
under the operator $E$ to write it as $t = \tau_E(t)^E l(t)$.
We define $b(t) = b_{\tau_E(t)}\big(l(t)\big)$ for $t \in \R^d \backslash \{0\}$.
Then from (\ref{Eq:Con3}) we derive that for all $r > 0$ and
$t \in \R^d \backslash \{0\}$,
\[
b_{r\tau_E(t)}\big(l(t)\big) = b_r\big(\tau_E(t)^E l(t)\big) + r^D b_{\tau_E(t)}
\big(l(t)\big),
\]
which can be rewritten as
\[
b\big((r\tau_E(t))^El(t)\big) = b_r\big(t\big) + r^D b \big(t\big).
\]
This implies  $b_r(t) = b\big(r^E t\big) -r^D b(t)$ for all $r > 0$ and $t \in \R^d
\backslash \{0\}$. In the case when $X(0) = a$ a.s., (\ref{s2-t1-1}) implies
$b_r(0) = a - r^D a$, which shows that (\ref{Eq:br}) still holds for $t = 0$.
\qed
\end{proof}
\medskip

\begin{proof}\ {\it of Theorem \ref{s2-t2}}.\,
The proof is similar to the proof of Theorem \ref{s2-t1}, with some minor modifications.
For proving Part (i), we define $G_r$ to be the set of $A\in \text{Aut}(\R^m)$ such that
$\{X(r^Et),\, t \in \R^d\}\stackrel{d}{=}\{AX(t),
\, t \in \R^d\};$ and for proving Part (ii), we define $G_r$ to be the set
of $A\in \text{Aut}(\R^m)$ such that
$\{X(r^Et),\, t \in \R^d\}\stackrel{d}{=}\{AX(t)+ b(r),
\, t \in \R^d\},$ for some function $b: (0,\, \infty) \to \R^m$.
The rest of the proof follow similar lines as in the proof of Theorem \ref{s2-t1}
and is omitted.
\qed
\end{proof}

\medskip
We end this section with two more propositions. Proposition \ref{s2-p1}
shows that, if a $(d, m)$-random field $X$ is w.o.s.s. with time-variable
scaling exponent $E$, then along each direction of the eigenvectors of $E$,
$X$ is an ordinary one-parameter operator-self-similar process as defined
by Sato \cite{S91}. Proposition \ref{s3-p1}
discusses the relationship between w.o.s.s. random fields and o.s.s. random
fields in the sense of Hudson and Mason (see (ii) in Remark 1.1).

\begin{prop}\label{s2-p1}
Let $X=\{X(t), t\in\R^d\}$ be a stochastically continuous
and proper $(E, D)$-w.o.s.s. random field with values in
$\R^m$. Let $\lambda$ be a positive eigenvalue of $E$ and
$\xi \in\R^d$ satisfy $E \xi=\lambda \xi$. Denote
$\widetilde{b}_r(u)= b_r(u\xi)$ for all $u\in \R$. Then the
following statements hold.
\begin{itemize}
\item[(i)]\, There exists a
continuous function $f(u)$ from $\R\backslash\{0\}$ to $\R^m$, such
that $\widetilde{b}_r(u)=f(u r^\lambda)-r^Df(u)$ for all $u\not=0$
and $r>0$.
\item[(ii)]\, If $D\in Q(\R^m)$, then
$f(u)$ can be defined at $u=0$ such that $f(u)$ is continuous in
$\R$. Moreover, the stochastic process $Y= \{Y(u),\, u \in \R\}$
defined by $Y(u)=X(u\xi)-f(u)$ satisfies that for any $r>0$
$$
\big\{Y(r u),\, u\in\R\big\}\stackrel{d}{=}\big\{r^{D/\lambda}Y(u),
\, u \in\R \big\}.
$$
\end{itemize}
\end{prop}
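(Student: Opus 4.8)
The plan is to derive Proposition \ref{s2-p1} from the results already established for w.o.s.s. random fields, specializing the ``time''-scaling to the one-dimensional line $\R\xi$ spanned by the eigenvector $\xi$. First I would observe that since $E\xi = \lambda\xi$, we have $r^E\xi = r^\lambda\xi$ for every $r > 0$; hence restricting \eqref{s2-t1-1} to arguments of the form $t = u\xi$ with $u\in\R$ gives $\{X((ur^\lambda)\xi),\ u\in\R\} \stackrel{d}{=} \{r^D X(u\xi) + \widetilde{b}_r(u),\ u\in\R\}$. Reparametrizing via $r^\lambda = s$ (so $r = s^{1/\lambda}$, legitimate since $\lambda > 0$) this says the $\R^m$-valued process $u\mapsto X(u\xi)$ is w.o.s.s. in the sense of Sato's one-parameter theory with time-scaling exponent $1$ and space-scaling exponent $D/\lambda$, with the drift given by $s\mapsto \widetilde{b}_{s^{1/\lambda}}(u)$.

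For part (i), I would then either invoke the one-parameter analogue of Corollary \ref{s2-c2} (Sato \cite{S91}, Lemma 2.4) directly, or, to stay self-contained, mimic the proof of Corollary \ref{s2-c2} in this restricted setting: for $u\neq 0$ write $u = \mathrm{sgn}(u)\,|u|$ and use the cocycle identity \eqref{Eq:Con3} restricted to arguments $u\xi$, namely $\widetilde{b}_{r_1r_2}(u) = \widetilde{b}_{r_1}(ur_2^\lambda) + r_1^D \widetilde{b}_{r_2}(u)$, which is just the scalar shadow of \eqref{Eq:Con3}. Setting $f(u) := \widetilde{b}_{|u|}\big(\mathrm{sgn}(u)\big)$ for $u\neq 0$ (so $f$ is defined by evaluating $\widetilde b$ at the two ``unit'' points $\pm 1$) and feeding this into the identity yields, after the substitution $r_2^\lambda \cdot (\text{radial part}) \mapsto$ new radial part, the relation $\widetilde{b}_r(u) = f(ur^\lambda) - r^D f(u)$; continuity of $f$ on $\R\backslash\{0\}$ follows from the joint continuity of $(r,t)\mapsto b_r(t)$ asserted in Theorem \ref{s2-t1}.

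For part (ii), when $D\in Q(\R^m)$, Theorem \ref{s2-t1} tells us $X(0) = a$ a.s. for some constant $a$, and Corollary \ref{s2-c2} lets us extend $b(\cdot)$ to all of $\R^d$ with $b(0) = a$; restricting this extension to $\R\xi$ and composing with $u\mapsto u\xi$ defines $f(0) := a$ and makes $f$ continuous on all of $\R$ with $\widetilde{b}_r(u) = f(ur^\lambda) - r^D f(u)$ now valid for all $u\in\R$. Then I would set $Y(u) = X(u\xi) - f(u)$ and compute directly: from \eqref{s2-t1-1} restricted to $\R\xi$, for $r > 0$,
\[
\big\{Y(r^\lambda u),\ u\in\R\big\} \stackrel{d}{=} \big\{r^D X(u\xi) + \widetilde{b}_r(u) - f(r^\lambda u),\ u\in\R\big\} = \big\{r^D X(u\xi) - r^D f(u),\ u\in\R\big\} = \big\{r^D Y(u),\ u\in\R\big\}.
\]
Replacing $r^\lambda$ by a fresh variable $\rho > 0$, i.e. $r = \rho^{1/\lambda}$, converts the left side into $\{Y(\rho u)\}$ and the right side into $\{\rho^{D/\lambda} Y(u)\}$, which is exactly the claimed identity.

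The main obstacle, such as it is, is bookkeeping rather than anything deep: one must be careful that the substitution $r \mapsto r^{1/\lambda}$ is a genuine bijection of $(0,\infty)$ (it is, because $\lambda$ is a \emph{positive} eigenvalue — the hypothesis is used exactly here) and that the polar-type decomposition underlying the definition of $f$ is consistent on the two rays $\R_{>0}\xi$ and $\R_{<0}\xi$, which is why $f$ is built from the two values $\widetilde b_{\cdot}(\pm 1)$; the symmetry $\tau(x) = \tau(-x)$ recalled after \eqref{s5-1} (applied with $\Theta = E$ on the invariant line) guarantees the radial coordinate of $u\xi$ is $|u|$ up to the $E$-polar normalization, so no inconsistency arises. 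Everything else is a direct transcription of Theorem \ref{s2-t1} and Corollary \ref{s2-c2} to the one-parameter sub-family $\{r^E|_{\R\xi} : r > 0\} = \{r^\lambda : r > 0\}$.
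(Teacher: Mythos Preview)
Your overall strategy is exactly the paper's: restrict the w.o.s.s. identity to the eigen\-line $\R\xi$, use the cocycle relation \eqref{Eq:Con3} specialized to $t=u\xi$, and read off a drift function $f$ so that $Y(u)=X(u\xi)-f(u)$ becomes $D/\lambda$-o.s.s. The computation in part (ii) is correct.

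There is, however, a genuine slip in part (i). You set $f(u):=\widetilde{b}_{|u|}\big(\mathrm{sgn}(u)\big)$, but with this choice the cocycle identity $\widetilde{b}_{r_1r_2}(u)=\widetilde{b}_{r_1}(ur_2^\lambda)+r_1^D\widetilde{b}_{r_2}(u)$ does \emph{not} yield $\widetilde{b}_r(u)=f(ur^\lambda)-r^Df(u)$. Indeed, putting $u=1$ gives $f(r_1r_2)=\widetilde{b}_{r_1}(r_2^\lambda)+r_1^Df(r_2)$, i.e.\ $\widetilde{b}_{r_1}(r_2^\lambda)=f(r_1r_2)-r_1^Df(r_2)$; setting $u=r_2^\lambda$ then forces $r_2=u^{1/\lambda}$ and produces $\widetilde{b}_r(u)=f(ru^{1/\lambda})-r^Df(u^{1/\lambda})$, which is not the claimed formula unless $\lambda=1$. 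The source of the error is your assertion that ``the radial coordinate of $u\xi$ is $|u|$'': since $r^E\xi=r^\lambda\xi$, if $\xi$ is normalized to lie on $\Sigma_0$ then $\tau_E(u\xi)=|u|^{1/\lambda}$, not $|u|$. The correct definition---the one the paper uses and the one your own appeal to Corollary \ref{s2-c2} in part (ii) implicitly invokes---is
\[
f(u)=\widetilde{b}_{|u|^{1/\lambda}}\big(\mathrm{sgn}(u)\big)=b_{|u|^{1/\lambda}}\big(\mathrm{sgn}(u)\,\xi\big),
\]
and with this $f$ the cocycle computation goes through exactly as you outline. So the fix is a one-symbol change, but as written the formula for $f$ is wrong and the derivation in (i) does not close.
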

\begin{proof}. By Corollary \ref{s2-c1}, we have that
$$
b_{r_1r_2}(u\xi)=b_{r_1}(r_2^Eu\xi)+r_1^Db_{r_2}(u\xi)
$$
for all $r_1, r_2 > 0$. Since $E\xi=\lambda \xi$ and
$r_2^Eu\xi=ur_2^\lambda \xi$, we have
\beqlb\label{s2-p1-1}
 b_{r_1r_2}(u\xi)=b_{r_1}(r_2^\lambda u\xi)+r_1^Db_{r_2}(u\xi).
\eeqlb
Define $f(u)=b_{u^{1/\lambda}}(\xi)$ for $u>0$ and
$f(u)=b_{|u|^{1/\lambda}}(-\xi)$ for $u<0$. Then the continuity
of $f(u)$ on $\R\backslash \{0\}$ follows from the continuity
of $b_r(t)$. Moreover, from (\ref{s2-p1-1}) it follows that
\beqlb
 \widetilde{b}_{r_1}(r_2^\lambda)&=&-r_1^D f(r_2^\lambda)
 +f(r_1^\lambda  r_2^\lambda),\label{s2-p1-2}
 \\
 \widetilde{b}_{r_1}(-r_2^\lambda)&=&-r_1^D f(-r_2^\lambda)
 +f(-r_1^\lambda  r_2^\lambda).\label{s2-p1-3}
\eeqlb
Writing $u = r_2^{\lambda}$ or $- r_2^{\lambda}$ and $r = r_1$,
we see that (\ref{s2-p1-2}) and (\ref{s2-p1-3}) yield that
\beqlb\label{s2-p1-4}
\widetilde{b}_r(u)=f(u r^\lambda)-r^Df(u)
\eeqlb
for all $r>0$, $u\not=0$. This proves (i).

Suppose $D\in Q(\R^m)$. Lemma \ref{s2-l1} implies that $r^DX(\xi)\to
0$ and $r^DX(-\xi)\to 0$ in probability as $r\to 0$. Theorem
\ref{s2-t1} and the convergence of types theorem indicate that,
as $r \to 0+$, the limits of $b_r(\xi)$ and $b_r(-\xi)$ exist and
coincide. Hence, we can
define $f(0):=\lim_{r\to0}b_r(\xi)$. Then $f(u)$ is continuous in
$\R$. Combining (\ref{s2-t1-1}) and (\ref{s2-p1-4}) yields that
for all $r>0$, $u\in\R$,
\[
\begin{split}
\big\{X(r^\lambda u\xi),\, u \in \R\big\}
& =\{X(r^Eu\xi),\, u \in \R\}\\
&\stackrel{d} {=} \big\{r^DX(u \xi)+
f(ur^{\lambda})-r^Df(u), \, u \in \R \big\}.
\end{split}
\]
Hence for the process $Y= \{Y(u), u \in \R\}$ defined by
$Y(u)=X(u \xi)-f(u)$, we have  $\{Y(r^\lambda u), u \in \R\}
\stackrel{d} {=} \{r^D Y(u), u \in \R\}$.
Equivalently, $Y$ is $D/\lambda$-o.s.s. This finishes the proof.
\qed
\end{proof}

\begin{prop}\label{s3-p1}
Let $X=\{X(t), t\in\R^d\}$ be a stochastically continuous
and proper $(E,D)$-w.o.s.s. random field with values in $\R^m$.
Suppose $E$ has two different positive eigenvalues $\lambda_1$ and
$\lambda_2$. Then $X$ is o.s.s. in the sense of Hudson and Mason if
and only if $b_r(t)$ in (\ref{s2-t1-1}) only depends on $r$ and $|t|$
for all $r>0$ and $t\in\R^d$.
\end{prop}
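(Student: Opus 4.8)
The statement is an equivalence, so I would prove the two implications separately, using throughout Corollary~\ref{s2-c1}: for the given exponent $D$ the increment $b_r(t)$ is the \emph{unique} function making (\ref{s2-t1-1}) hold, and it satisfies the cocycle relation (\ref{Eq:Con3}). Since $\lambda_1\ne\lambda_2$ are real positive eigenvalues of $E$, I would fix unit eigenvectors $\xi_1,\xi_2\in\R^d$ with $E\xi_j=\lambda_j\xi_j$, so that $r^E\xi_j=r^{\lambda_j}\xi_j$ and $|r^{\lambda_j}\xi_j|=r^{\lambda_j}$ for all $r>0$.

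The \emph{sufficiency} direction is the substantive one. Assume $b_r(t)=g(r,|t|)$ for some function $g$, and note that $g(\rho,\cdot)$ is continuous on $(0,\infty)$ because $b_r(t)$ is continuous in $t$ by Theorem~\ref{s2-t1}. Evaluating (\ref{Eq:Con3}) at $t=\xi_1$ and at $t=\xi_2$ and using the eigenvector identities above gives, for $j=1,2$,
\[
g(r_1r_2,1)=g\big(r_1,r_2^{\lambda_j}\big)+r_1^D\,g(r_2,1).
\]
Subtracting the two equations cancels the common terms and leaves $g(r_1,r_2^{\lambda_1})=g(r_1,r_2^{\lambda_2})$ for all $r_1,r_2>0$. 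Writing $\rho=r_2^{\lambda_1}$ and $c=\lambda_2/\lambda_1$ (so $c>0$, $c\ne1$), this says $g(r_1,\rho)=g(r_1,\rho^c)$ for every $\rho>0$; iterating, and also applying it with $\rho$ replaced by $\rho^{1/c}$, gives $g(r_1,\rho)=g(r_1,\rho^{c^n})$ for every integer $n$. Since $c\ne1$, one of the limits $n\to+\infty$, $n\to-\infty$ sends $c^n\to0$, hence $\rho^{c^n}\to1$, and continuity of $g(r_1,\cdot)$ at $1$ forces $g(r_1,\rho)=g(r_1,1)$ for all $\rho>0$. Thus $b_r(t)=g(r,1)=:b(r)$ for all $t\ne0$, and by continuity of $b_r(\cdot)$ at the origin also $b_r(0)=b(r)$. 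Substituted into (\ref{s2-t1-1}) this gives $\{X(r^Et),\,t\in\R^d\}\stackrel{d}{=}\{r^DX(t)+b(r),\,t\in\R^d\}$ for every $r>0$, i.e.\ $X$ is o.s.s.\ in the sense of Hudson and Mason, with $B(r)=r^D$ and $a(r)=b(r)$ (continuous, in accordance with Theorem~\ref{s2-t2}(ii)).

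For the \emph{necessity} direction, suppose $X$ is o.s.s.\ in the sense of Hudson and Mason. By Theorem~\ref{s2-t2}(ii) there are $D'\in M(\R^m)$ and a continuous $b':(0,\infty)\to\R^m$ with $\{X(r^Et)\}\stackrel{d}{=}\{r^{D'}X(t)+b'(r)\}$, so this Hudson--Mason increment depends on $r$ only; if one can take $D'=D$, then the uniqueness in Corollary~\ref{s2-c1} yields at once $b_r(t)=b'(r)$, which depends only on $r$, hence only on $r$ and $|t|$. The point is therefore to show that the exponent $D$ delivered by Theorem~\ref{s2-t1} is itself admissible as a Hudson--Mason exponent. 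I would obtain this by running the group argument in the proof of Theorem~\ref{s2-t1} in parallel for the w.o.s.s.\ subgroups $G_r$ and for their Hudson--Mason analogues $G_r^{\mathrm{HM}}$ (the $A\in\text{Aut}(\R^m)$ realizing (\ref{s2-t1-1}) with a $t$-independent shift): Lemma~\ref{s3-l1} together with the Lie-group structure of $G$ shows that $D-D'$ lies in the tangent algebra at $I$ of the symmetry group of the symmetrization $X_0$, and that after composing $r^{D'}$ with a suitable one-parameter family in that group --- which acts on the state space $\R^m$ only --- one recovers the Hudson--Mason representation with scaling $r^D$ and a $t$-independent increment; Corollary~\ref{s2-c1} then finishes the argument.

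I expect this reconciliation of the two exponents in the necessity direction to be the main obstacle; by contrast, the cocycle computation and the elementary iteration-and-continuity step in the sufficiency direction are routine once Corollary~\ref{s2-c1} and Theorem~\ref{s2-t2}(ii) are in hand.
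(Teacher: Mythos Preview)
Your sufficiency argument is essentially the paper's: fix unit eigenvectors $\xi_1,\xi_2$, plug $t=\xi_j$ into the cocycle relation~(\ref{Eq:Con3}) to obtain $g(r_1,r_2^{\lambda_1})=g(r_1,r_2^{\lambda_2})$, iterate, and use continuity of $g(r,\cdot)$ to conclude that $g(r,u)$ is constant in $u$, hence $b_r(t)=g(r,1)$ is independent of $t$. The only cosmetic difference is that the paper sends $u^{(\lambda_2/\lambda_1)^n}\to 1$ directly (assuming $\lambda_2<\lambda_1$) and then invokes continuity at $0$ via $g(r,0)=\lim_{u\to 0}g(r,u)$, whereas you allow either sign of $\log c$ and handle $t=0$ by continuity of $b_r(\cdot)$; the content is the same.

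Where you diverge from the paper is the necessity direction. The paper disposes of it in one sentence: it reads the hypothesis as ``$(E,D)$-o.s.s.\ in the sense of Hudson and Mason'' with the \emph{same} exponent $D$, so that the Hudson--Mason shift $b(r)$ is already a $t$-independent candidate for $b_r(t)$, and Corollary~\ref{s2-c1} (uniqueness of $b_r$ given $E$ and $D$) forces $b_r(t)=b(r)$. Under that reading the direction is indeed trivial. You instead take the hypothesis literally (some HM representation with a possibly different exponent $D'$), which raises a genuine question the paper simply does not address. However, your proposed resolution---that $D-D'$ lies in the tangent algebra of the symmetry group of the symmetrization and that composing with a one-parameter subgroup recovers a $t$-independent shift for $r^D$---is not a proof as written: you have not explained why the one-parameter correction preserves $t$-independence of the shift, nor why $r^{D}$ and $r^{D'}$ differ by an element of $G_1^{\mathrm{HM}}$ rather than merely of $G_1$. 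If you want to pursue the strict reading, that step needs to be made precise; if you adopt the paper's reading, the whole paragraph collapses to the one-line argument via Corollary~\ref{s2-c1}.
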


\begin{proof}. The ``necessity " part is obvious, because,
for every $(E,D)$-o.s.s. random field in the sense of Hudson and Mason,
the function $b(r)$ does not depend on $t$.

In the following, we prove the sufficiency. Suppose
$b_r(t)$ only depends on $r$ and $|t|$ for all $r>0$
and $t\in\R^d$. Then we can
find a function $g$ on $\R^2$ such that $b_r(t)=g(r, |t|)$.  By
Corollary \ref{s2-c1}, we have that for all $r_1, r_2>0$ and
$t\in\R^d$
 \beqlb\label{s3-p1-1}
 g(r_1r_2, |t|)=g(r_1, |r_2^Et|)+r_1^Dg(r_2, |t|).
 \eeqlb
Let $\xi_1$, $\xi_2$ be the eigenvectors of $E$ corresponding to
$\lambda_1$ and $\lambda_2$, respectively. Without loss of
generality, we assume $|\xi_1|=|\xi_2|=1$ and $\lambda_2<\lambda_1$.
Then from (\ref{s3-p1-1}), we have that
 \beqnn
 g(r_1r_2, 1)&=&g(r_1, r_2^{\lambda_1})+r_1^Dg(r_2, 1),
 \\ g(r_1r_2, 1)&=&g(r_1, r_2^{\lambda_2})+r_1^Dg(r_2, 1),
 \eeqnn
where we have used the facts $r^E\xi_1=r^{\lambda_1}\xi_1$ and
$r^E\xi_2=r^{\lambda_2}\xi_2$. Therefore, we derive that
$g(r, u^{\lambda_1})=g(r, u^{\lambda_2})$for any
$r>0$ and $u\geq 0$ and hence, for all $n\geq  1$,
\beqlb\label{s3-p1-2}
g(r, u)=g(r, u^{\lambda_2/\lambda_1})=g(r,
u^{\lambda_2^n/\lambda_1^n}).
\eeqlb
Note that by Theorem \ref{s2-t1}, $g(r, u)$ is continuous
on $(0, \, \infty)\times[0, \, \infty)$. Therefore,
 \beqlb\label{s3-p1-3}
 g(r, 0)=\lim_{u\to 0}g(r, u)
 \eeqlb
and for any $u>0$, letting $n\to \infty$, from (\ref{s3-p1-2})
we get that
\beqlb\label{s3-p1-4}
  g(r, u)=g(r, 1).
\eeqlb
Combining (\ref{s3-p1-3}) with (\ref{s3-p1-4}), we obtain that
$g(r, 0)=g(r, 1)$ and hence for all $r>0$ and $u\geq0$, $g(r, u)
=g(r, 1)$. This means $b_r(t)=g(r, 1)$ is independent of $t$.
Hence the random field $X$ is o.s.s. in the sense of Hudson and Mason.
\qed
\end{proof}

\bigskip

\section{Construction of o.s.s. stable random fields:
Proofs of Theorems \ref{s4-t1}--\ref{s5-t2}}

This section is concerned with constructing $(E,D)$-o.s.s. random
fields by using stochastic integrals with respect to S$\alpha$S
random measures. In particular, we prove the remaining theorems in
Section 2.

Note that Theorem \ref{s4-t1} is a multiparameter extension of
Theorem 4.1 in \cite{MM94} and can be proved by using essentially
the same argument with some modifications. Hence the proof of
Theorem \ref{s4-t1} is omitted here. In the following, we first
prove Theorem \ref{s4-t2}.

\begin{proof}\ {\it of Theorem \ref{s4-t2}}. We divide the proof
into two steps.

(1) When $\widetilde{Q}(u)$ is a simple function of the form
\beqlb\label{s4-t2-2}
\widetilde{Q}(u)=\widetilde{Q}_1(u)+i\widetilde{Q}_2(u)=
\sum_{j=1}^k R_j{\bf 1}_{A_j}(u)
+i\sum_{j=1}^k I_j{\bf 1}_{A_j}(u),
\eeqlb
where $R_j, I_j\in {\rm End}(\R^m)$ and $A_j, j=1,2,\cdots, k$
are pairwise disjoint sets in ${\mathcal M}$, we define
$$
\widetilde I(\widetilde{Q})=\sum_{j=1}^k \Big(R_j
\widetilde{M}_R(A_j)-I_j\widetilde{M}_I(A_j)\Big).
$$
Then for any $\theta \in \R^m$, from (\ref{s4-d1-2}), we obtain
that
\beqlb\label{s4-t2-3}
\E\Big[\e^{i \l\theta, \widetilde I(\widetilde{Q})\r}\Big]
&=&\exp\Bigg\{-\sum_{j=1}^{k}\Big(\big|R_j^*\theta\big|^2
+\big|I_j^*\theta\big|^2\Big)^{\alpha/2}\lambda(A_j)\Bigg\}\nonumber \\
&=&\exp\Bigg\{-\int_{\R^d}\Big(\big|\widetilde{Q}_1(u)^*\theta\big|^2
+\big|\widetilde{Q}_2(u)^*\theta\big|^2\Big)^{\alpha/2}\, \d
u\Bigg\}.
\eeqlb

(2). When $\{\widetilde{Q}(u)\}$ fulfills $\int_{\R^d}
\big(\|\widetilde{Q}_1(u)\|^\alpha  +\|\widetilde{Q}_2(u)\|^\alpha
\big) \d u<\infty$, we can choose a sequence of simple functions
$\{\widetilde{Q}^{(n)}(u)=\widetilde{Q}_1^{(n)}(u)
 +i \widetilde{Q}_2^{(n)}(u)\}$ of the form (\ref{s4-t2-2})
 such that as $n\to\infty$,
 \beqlb\label{s4-t2-4}
 \int_{\R^d}\big\|\widetilde{Q}_1(u)^*-\widetilde{Q}_1^{(n)}(u)^*
 \big\|^\alpha\d  u\to 0
 \eeqlb
 and
 \beqlb\label{s4-t2-5}
 \int_{\R^d}\big\|\widetilde{Q}_2(u)^*-\widetilde{Q}_2^{(n)}(u)^*
 \big\|^\alpha\d u \to 0.
 \eeqlb
By the linearity of $\widetilde I(\cdot)$ we have
$$
\widetilde I(\widetilde{Q}^{(n)})- \widetilde I(\widetilde{Q}^{(\ell)})=
\widetilde I(\widetilde{Q}^{(n)}
-\widetilde{Q}^{(\ell)}),
$$
and $\E(\e^{i \l\theta, \widetilde I(\widetilde{Q}^{(n)}-\widetilde{Q}^{(\ell)})\r})$
equals
\beqnn
&&\exp\Bigg\{-\int_{\R^d}\Big(\big|\big(\widetilde{Q}_1^{(n)}(u)^*-
  \widetilde{Q}_1^{(\ell)}(u)^*\big)\theta\big|^2+
  \big|\big(\widetilde{Q}_2^{(n)}(u)^*- \widetilde{Q}_2^{(\ell)}(u)^*\big)
  \theta\big|^2\Big)^{\alpha/2}\d   u\Bigg\}
  \\&&\geq \exp\Bigg\{-\int_{\R^d}\big|\big(\widetilde{Q}_1^{(n)}(u)^*
  -\widetilde{Q}_1^{(\ell)}(u)^*\big)\theta\big|^\alpha\d u-\int_{\R^d}
  \big|\big(\widetilde{Q}_2^{(n)}(u)^*-\widetilde{Q}_2^{(\ell)}(u)^*\big)
\theta\big|^\alpha\d u\Bigg\}
\eeqnn
 which converges to $1$ as $\ell, n\to\infty$ by (\ref{s4-t2-4}) and
 (\ref{s4-t2-5}). Thus $\widetilde I(\widetilde{Q}^{(n)})-
 \widetilde I(\widetilde{Q}^{(\ell)})\to 0$
 in probability as $\ell, n\to\infty$, and $ \widetilde I(\widetilde{Q}^{(n)})$
 converges to an $\R^m$-valued random vector in probability. It is
 easy to see that the limit does not depend on the choice of
 $\{\widetilde{Q}^{(n)}\}$. Therefore, we can define $\widetilde I (\widetilde{Q})$ as
 the limit of $\widetilde I(\widetilde{Q}^{(n)})$, and hence
 \beqnn
\E\Big(\e^{i \l \theta, \widetilde I(\widetilde{Q})\r}\Big)&=&
\lim_{n\to\infty}\E\Big(\e^{i\l\theta,
\widetilde I(\widetilde{Q}^{(n)})\r}\Big)\\
&=&\exp\bigg\{-\int_{\R^d}\Big(\sqrt{\big|\widetilde{Q}_1(u)^*\theta\big|^2
 +\big|\widetilde{Q}_2(u)^*\theta\big|^2}\,\Big)^\alpha \d
 u\bigg\}.
 \eeqnn
The proof of Theorem \ref{s4-t2} is completed. \qed
\end{proof}

In order to prove Theorem \ref{s5-t1} and Theorem \ref{s5-t2}, we
will use the following change of variable formula from \cite{BMS07}.

\begin{lem}\label{s5-l1}
\upshape{(\cite[Proposition 2.3]{BMS07})} Let $E \in Q(\R^d)$
be fixed and let $(\tau(x), l(x))$ be the polar coordinates of $x$
under the operator $E$. Denote $\Sigma_0:=\{\tau(x)=1\}$. Then
there exists a unique finite Radon measure $\sigma$ on $\Sigma_0$
such that for all $f\in L^1(\R^d, \d x)$,
$$
\int_{\R^d}f(x)\, \d x=\int_0^\infty\int_{\Sigma_0}
f(r^E\theta)\sigma(\d \theta)r^{q-1}\d r.
$$
\end{lem}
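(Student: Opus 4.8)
The plan is to realize the formula as a disintegration of Lebesgue measure along the multiplicative one-parameter group $\{r^E:r>0\}$. The single substantive ingredient is the scaling identity
\[
\lambda_d(r^E A)=r^{q}\,\lambda_d(A),\qquad \forall\, A\in\mathcal B(\R^d),\ r>0,
\]
which follows from the linear change-of-variables formula together with $\det(r^E)=\e^{(\ln r)\,\mathrm{tr}(E)}=r^{q}$, since $q=\mathrm{tr}(E)$. I will use throughout that, by the continuity of $x\mapsto\tau(x)$ and $x\mapsto l(x)$ recalled above, the polar map $\Psi(r,\theta):=r^E\theta$ is a homeomorphism of $(0,\infty)\times\Sigma_0$ onto $\R^d\setminus\{0\}$, with $\tau(r^E\theta)=r$ and $l(r^E\theta)=\theta$ whenever $\theta\in\Sigma_0$.

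First I would build the candidate measure. For a Borel set $B\subseteq\Sigma_0$ put
\[
C(B):=\{x\in\R^d\setminus\{0\}:\, l(x)\in B,\ \tau(x)\le1\},
\]
which is Borel by continuity of $\tau$ and $l$, and define $\sigma(B):=q\,\lambda_d(C(B))$. Then $\sigma$ is a Borel measure on $\Sigma_0$; it is finite because $\{\tau\le1\}$ is bounded (recall $\tau(x)\to\infty$ as $|x|\to\infty$) and hence of finite Lebesgue measure, and it is Radon because every finite Borel measure on the compact set $\Sigma_0$ is Radon. Since $\{0\}$ is Lebesgue-null, restricting $C(B)$ to $\tau(x)>0$ does not affect $\sigma$.

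Next I would check the identity on the polar rectangles $S(B,a,b):=\Psi([a,b]\times B)=\{x:\,a\le\tau(x)\le b,\ l(x)\in B\}$ for $0<a<b$ and $B\subseteq\Sigma_0$ Borel. From $r^Es^E=(rs)^E$ one gets $r^EC(B)=S(B,0,r)$, so the scaling identity gives $\lambda_d(S(B,0,r))=r^{q}\lambda_d(C(B))$; by nesting, $\lambda_d(S(B,a,b))=(b^{q}-a^{q})\lambda_d(C(B))=\big(\int_a^b r^{q-1}\,\d r\big)\sigma(B)$. On the other hand, $\Psi^{-1}(S(B,a,b))=[a,b]\times B$, so the right-hand side of the asserted formula with $f={\bf 1}_{S(B,a,b)}$ equals $\int_a^b r^{q-1}\,\d r\int_{\Sigma_0}{\bf 1}_B(\theta)\,\sigma(\d\theta)=(b^{q}-a^{q})\sigma(B)/q$, matching the left-hand side.

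Finally I would extend and conclude. The sets $S(B,a,b)$ form a $\pi$-system (their pairwise intersections are again of this type) generating $\mathcal B(\R^d\setminus\{0\})$, because $\Psi$ is a homeomorphism and the products $[a,b]\times B$ generate $\mathcal B((0,\infty)\times\Sigma_0)$; a Dynkin/monotone-class argument then promotes the identity to $f={\bf 1}_A$ for every Borel $A$, and Tonelli's theorem together with approximation by simple functions and monotone convergence extends it to all nonnegative measurable $f$, and thence to $f\in L^1(\R^d,\d x)$ by taking positive and negative parts. Uniqueness is immediate: if $\sigma'$ also satisfied the formula, testing against ${\bf 1}_{S(B,a,b)}$ would force $(b^{q}-a^{q})\sigma'(B)/q=\lambda_d(S(B,a,b))=(b^{q}-a^{q})\sigma(B)/q$, hence $\sigma'=\sigma$. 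I expect the only real care to lie not in the scaling identity but in the measure-theoretic bookkeeping---verifying that $\Psi$ is a measurable isomorphism so that pushforwards and Tonelli apply, that $\{\tau\le1\}$ is bounded, and that the monotone-class extension is legitimate---rather than in any single hard estimate.
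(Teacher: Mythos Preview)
Your argument is correct. The paper does not prove this lemma at all; it simply quotes it as \cite[Proposition 2.3]{BMS07} and uses it as a black box, so there is no ``paper's proof'' to compare against. What you have supplied is essentially the standard disintegration argument behind that cited proposition: the key input is the Jacobian identity $\det(r^E)=r^{q}$, and your definition $\sigma(B)=q\,\lambda_d\big(\{x:\,l(x)\in B,\ \tau(x)\le1\}\big)$ together with the scaling $r^E C(B)=S(B,0,r)$ gives exactly the right normalization so that the two sides agree on polar rectangles. The $\pi$-system/monotone-class extension and the uniqueness check are routine and correctly handled. One cosmetic point: in your rectangle verification you write the right-hand side as $(b^{q}-a^{q})\sigma(B)/q$; it may read more cleanly to cancel the $q$ immediately against the one in the definition of $\sigma$ so the match with $(b^{q}-a^{q})\lambda_d(C(B))$ is transparent.
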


We also need the following lemma which is due to Maejima and Mason
\cite{MM94}. For more precise estimates on $\|r^D\|$ see Mason
and Xiao \cite{MX00}.
\begin{lem}\label{s5-l2}
Let $D\in Q(\R^m)$ and let $h>0$ and $H>0$ be the minimal and
maximal real parts of the eigenvalues of $D$, respectively. Then
for any $\delta>0$, there exist positive constants $C_3$ and $C_4$
such that
 $$
 \|r^D\|\leq\begin{cases}
 C_3\,r^{h-\delta}, \quad &\hbox{ if } 0<r\leq 1,
 \\ C_4\, r^{H+\delta}, & \hbox{ if } r>1.
 \end{cases}
 $$
\end{lem}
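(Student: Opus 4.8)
The plan is to deduce this from the Jordan decomposition of $D$ together with the elementary description of $r^D = \e^{(\ln r)D}$. Write $D = S + N$ where $S$ is the semisimple part, $N$ the nilpotent part, and $SN = NS$; then $r^D = r^S r^N$ and $\|r^D\| \le \|r^S\|\,\|r^N\|$ (and similarly with the reverse inequality after passing to $r^{-D}$). The point of the $\delta$ in the statement is precisely to absorb the polynomial-in-$\ln r$ factor coming from $N$. First I would treat $r^N$: since $N$ is nilpotent of some index $k \le m$, $r^N = \sum_{j=0}^{k-1} \frac{(\ln r)^j}{j!} N^j$ is a matrix-valued polynomial in $\ln r$ of degree $\le k-1$, so $\|r^N\| \le P(|\ln r|)$ for an explicit polynomial $P$. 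Hence for any $\delta > 0$ there are constants so that $\|r^N\| \le C\, r^{-\delta}$ for $0 < r \le 1$ and $\|r^N\| \le C\, r^{\delta}$ for $r > 1$, because $|\ln r|^j = o(r^{-\delta})$ as $r \downarrow 0$ and $|\ln r|^j = o(r^{\delta})$ as $r \to \infty$.

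Next I would handle the semisimple part $r^S$. Over $\C$, $S$ is diagonalizable with eigenvalues $\lambda_1,\dots,\lambda_m$ whose real parts lie in $[h, H]$. In a basis that puts $S$ in (complex) diagonal form, $r^S$ acts on the $\ell$-th coordinate by $r^{\lambda_\ell} = r^{\Re\lambda_\ell} \e^{i(\Im\lambda_\ell)\ln r}$, which has modulus exactly $r^{\Re\lambda_\ell}$. Therefore in that basis the operator norm of $r^S$ is $\max_\ell r^{\Re\lambda_\ell}$, which is $\le r^{h}$ for $0 < r \le 1$ and $\le r^{H}$ for $r > 1$. Changing back to the original (real) basis only multiplies the norm by a fixed constant (the condition number of the change-of-basis matrix), so $\|r^S\| \le C_0\, r^{h}$ for $0 < r \le 1$ and $\|r^S\| \le C_0\, r^{H}$ for $r > 1$. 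Combining this with the bound on $\|r^N\|$ via $\|r^D\| \le \|r^S\|\,\|r^N\|$ gives exactly the claimed estimate: for $0 < r \le 1$, $\|r^D\| \le C_3\, r^{h-\delta}$, and for $r > 1$, $\|r^D\| \le C_4\, r^{H+\delta}$.

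The only subtlety — and the step I would be most careful with — is the interplay between the nilpotent factor and the boundary eigenvalues, i.e. making sure the $\delta$-loss is genuinely needed only because of $N$ and is uniform; this is why the hypothesis $D \in Q(\R^m)$ (all real parts strictly positive, hence $h > 0$) is comfortable but the statement still keeps the $\delta$. One can alternatively avoid the explicit Jordan form by invoking Lemma 2.1: $D \in Q(\R^m)$ gives $r^D x \to 0$ as $r \downarrow 0$ for every $x$, and a compactness/uniform-boundedness argument on the unit sphere upgrades this to the quantitative bound, but the Jordan-decomposition route is cleaner and yields the $\delta$-dependence transparently. I would remark at the end that sharper estimates (without $\delta$, at the cost of polynomial-in-$\ln r$ correction factors) are in Mason and Xiao \cite{MX00}, as already noted before the lemma, so for our purposes the stated form suffices.
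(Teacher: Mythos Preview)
Your argument is correct: the Jordan--Chevalley decomposition $D = S + N$ with $SN = NS$, the polynomial-in-$|\ln r|$ bound on $\|r^N\|$ from nilpotency, and the diagonalization of $S$ over $\C$ to get $\|r^S\| \le C_0\, r^h$ (for $r \le 1$) and $\le C_0\, r^H$ (for $r > 1$) combine exactly as you say. The only thing to compare against is that the paper does not actually prove this lemma at all---it is quoted verbatim from Maejima and Mason \cite{MM94}, with a pointer to sharper versions in Mason and Xiao \cite{MX00}. Your Jordan-form route is the standard one and is essentially what underlies the cited results, so there is no discrepancy; you have simply supplied the omitted details.
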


Now we are in position to prove Theorem \ref{s5-t1}.

\begin{proof}\ {\it of Theorem \ref{s5-t1}}. We divide the proof
into four  parts.

(i). First we show that the stochastic integral in (\ref{Eq:Xphi})
is well defined. By Theorem \ref{s4-t1}, it
suffices to show that for all $x \in \R^d$
\beqlb\label{s5-t1-1}
\Upsilon_\phi^\alpha(x)=\int_{\R^d}
\Big\|\phi(x-y)^{D-qI/\alpha}-\phi(-y)^{D-qI/\alpha}\Big\|^\alpha \d
y<\infty.
\eeqlb
Let $(\tau(x), l(x))$ be the polar coordinates of $x$ under
operator $E$. By the fact that $\phi$ is $E$-homogeneous, we see
that
$$
 \phi(y)=\tau(y)\phi(l(y))\qquad \forall \ y\in\R^d.
$$
Then by $(\ref{s5-2})$, we have that
 \beqlb\label{s5-3}
 m_\phi\tau(y)\leq\phi(y)\leq M_\phi \tau(y).
 \eeqlb
Therefore, there exists a constant $C_5>0$ such that
$$
\big\|\phi(y)^{D-qI/\alpha}\big\|^\alpha\leq
C_5\,\big\|\tau(y)^{D-qI/\alpha}\big\|^\alpha.
$$
Note that
$$
M_1=\sup_{m_\phi\leq r\leq
M_\phi}\|r^E\|>0\;\;\text{ and }\;\;M_2=\sup_{1/M_\phi\leq r\leq
1/m_\phi}\|r^E\|>0
$$
are finite because $r^E$ is continuous in $r$
and $\|r^E\|\not=0$ for all $r>0$, and that
 $$0<m=\inf_{y\in\Sigma_0}|y|\leq M=\sup_{y\in\Sigma_0}|y|<\infty,$$
since $\Sigma_0$ is compact and $0\not\in\Sigma_0$. Therefore, from
 $$\phi^{-E}(y)y=\phi^{-E}(y)\tau(y)^El(y)=(\phi^{-1}(y)\tau(y))^El(y)$$
and (\ref{s5-3}), it follows that
 \beqlb\label{s5-4}
 0<\frac{m}{M_1}\leq\big|\phi^{-E}(y)y\big|\leq MM_2<\infty.
 \eeqlb
Since $\phi$ is $(\beta, E)$-admissible, for any $z$ with
$\frac{m}{M_1}\leq |z|\leq MM_2$ there exists a positive constant
$C_1>0$ such that \beqlb\label{s5-5}
 |\phi(x+z)-\phi(z)|\leq C_1\tau(x)^\beta
\eeqlb
for all $x \in \R^d$ with $\tau(x)\leq 1$. For any $\gamma>0$, on the set
$\{y \in \R^d:\, \tau(y)\leq \gamma\}$, we have
$$
\Big\|\phi(x-y)^{D-qI/\alpha}-\phi(-y)^{D-qI/\alpha}\Big\|^\alpha
\leq 2 \Big\|\phi(x-y)^{D-qI/\alpha}\Big\|^\alpha
+2\Big\|\phi(-y)^{D-qI/\alpha}\Big\|^\alpha.
$$
Consequently, by Lemma \ref{s5-l1}, Lemma \ref{s5-l2} and the fact
$\tau(-y)=\tau(y)$, there exist constants $C_6>0$ and
$0<\delta<\alpha h$ such that,
  \beqnn
  \int_{\tau(y)\leq\gamma}\Big\|\phi(-y)^{D-qI/\alpha}\Big\|^\alpha\d y
  &\leq&\int_{\tau(y)\leq\gamma} C_5\Big\|\tau(y)^{D-qI/\alpha}\Big\|^\alpha\d y
  \\&\leq& C_6\int_{\tau(y)\leq\gamma} \tau(y)^{\alpha
  h-q-\delta}\d y<\infty.
  \eeqnn
At the same time, (\ref{s5-1}) implies
\[\begin{split}
\big\{y\in \R^d: \tau(x+y)\leq
\gamma\big\}&\subset\big\{y \in \R^d:\tau(y)\leq C_0(\gamma+\tau(-x))\big\}\\
&=\big\{y \in \R^d: \tau(y)\leq C_0\,(\gamma+\tau(x))\big\}.
\end{split}
\]
Consequently we derive that
\beqnn
&&\int_{\tau(y)\leq\gamma}\Big\|\phi(x-y)^{D-qI/\alpha}\Big\|^\alpha\d
y=\int_{\tau(x+y)\leq\gamma}\Big\|\phi(-y)^{D-qI/\alpha}\Big\|^\alpha\d
y\\
&&\qquad\leq C_6\int_{\tau(y)\leq C_0(\gamma+\tau(x))}
\tau(y)^{\alpha   h-q-\delta}\, \d y<\infty. \eeqnn Combining the
above shows that for any $\gamma>0$ \beqlb\label{s5-t1-2}
\int_{\tau(y)\leq\gamma}
\Big\|\phi(x-y)^{D-qI/\alpha}-\phi(-y)^{D-qI/\alpha}\Big\|^\alpha
\d y<\infty. \eeqlb

Next we consider the integral on the set $\{y \in \R^d: \tau(y)>\gamma\}$
for sufficiently large $\gamma$ such that $\phi(-y)^{-1}\tau(x)<1$,
$C_1\phi(-y)^{-\beta}\tau(x)^\beta<1/2$
and $\phi(-y)>1$. This is possible because of (\ref{s5-3}). Note that for
any $3/2>u>1/2$, from the fact
$$
\frac{\d s^{D-qI/\alpha}}{\d s}=\frac{\d}{\d s}\e^{\ln s (D-qI/\alpha)}
=(D-qI/\alpha)s^{\alpha  D-(1+q/\alpha)I}
$$
and Lemma \ref{s5-l2}, there exists $C_7>0$ such that
\beqlb \label{s5-t1-3a}
\begin{split}
\big\|u^{D-qI/\alpha}-I\big\| &\leq \Big\|D-\frac{q I}{\alpha}
\Big\|\int_{1\wedge u}^{1\vee u} \big\|s^{D-(1+q/\alpha)
I}\big\|\d s\\
&\leq C_7\,\Big\|D-\frac{q I}{\alpha}\Big\|\cdot|u-1|.
\end{split}
\eeqlb
Since $\phi$ is $E$-homogenous and $\phi(-y) > 0$, we have
\beqlb\label{s5-t1-3}
&&\Big\|\phi(x-y)^{D-qI/\alpha}-\phi(-y)^{D-qI/\alpha}\Big\|\nonumber
\\&&\qquad
\le \Big\|\phi(-y)^{D-qI/\alpha}\Big\|\cdot
\Big\|\phi(\phi^{-E}(-y)x-\phi^{-E}(-y)y)^{D-qI/\alpha}-I\Big\|.
\eeqlb
On the other hand, $\tau(\phi^{-E}(-y)x)=\phi^{-1}(-y)\tau(x)<1$ and
$\phi(-\phi^{-E}(-y)y)=1$, we can use (\ref{s5-4}) and
(\ref{s5-5}) to derive
\begin{equation}\label{s5-t1-3c}
\begin{split}
\Big|\phi(\phi^{-E}(-y)x-\phi^{-E}(-y)y)-1\Big|
&\le C_1\, \big[\tau(\phi^{-E}(-y)x)\big]^\beta \\
&=C_1\phi^{-\beta}(-y)\tau(x)^\beta.
\end{split}
\end{equation}
Since the last term is less than $1/2$, we can apply (\ref{s5-t1-3a})
with $u = \phi(\phi^{-E}(-y)x-\phi^{-E}(-y)y)$. Hence, we derive from
(\ref{s5-t1-3}), (\ref{s5-t1-3a}), (\ref{s5-t1-3c}) and Lemma \ref{s5-l2}
that for some $0<\delta_1<(\beta-H)\alpha$
\beqnn
&&\Big\|\phi(x-y)^{D-qI/\alpha}-\phi(-y)^{D-qI/\alpha}\Big\|^\alpha
\nonumber \\
&&\qquad\leq C_7^\alpha\, \Big\|\phi(-y)^{D-qI/\alpha}\Big\|^\alpha\cdot
\Big\|D-\frac{q}{\alpha}
I\Big\|^\alpha\, \big|\phi(\phi^{-E}(-y)x-\phi^{-E}(-y)y)-1\big|^\alpha
\nonumber \\&&\qquad\leq C_8
\, \Big\|\phi(-y)^{D-qI/\alpha}\Big\|^\alpha\cdot
\phi(-y)^{-\alpha\beta}\tau(x)^{\alpha\beta}\\
&&\qquad\leq C_9\, \phi(-y)^{\alpha H+\delta_1-q-\alpha\beta}\,
\tau(x)^{\alpha\beta} \\
&&\qquad\leq C_{10}\, \tau(y)^{\alpha H+\delta_1-q-\alpha\beta}\,
\tau(x)^{\alpha\beta}.
\eeqnn
This and Lemma \ref{s5-l1} yield
\beqlb\label{s5-t1-4}
\begin{split}
&\int_{\tau(y)>\gamma}\Big\|\phi(x-y)^{D-qI/\alpha}-
\phi(-y)^{D-qI/\alpha}\Big\|^\alpha\d y \\
&\qquad \le C_{11}\,\tau(x)^{\alpha\beta}
\int_\gamma^\infty r^{-\alpha (\beta-H) +\delta_1 -1}\,
dr<\infty.
\end{split}
\eeqlb
Combining (\ref{s5-t1-2}) and (\ref{s5-t1-4}), we get
(\ref{s5-t1-1}) which shows that $X_\phi$ is well defined.

(ii). To show the stochastic continuity of the $\alpha$-stable
random field $X_\phi$, it is sufficient to verify that
${\E}(\exp\{i\langle\theta,
X_{\phi}(x+x_0)-X_{\phi}(x_0)\rangle\}) \to 1 $ for all $x_0\in
\R^d$ and $\theta\in{\mathrm R}^m$. By Theorem 2.2 it is enough to
prove that for every $x_0\in \R^d$, we have \beqlb\label{s5-t1-5}
\int_{\R^d}
\Big\|\phi(x_0+x-y)^{D-qI/\alpha}-\phi(x_0-y)^{D-qI/\alpha}
\Big\|^\alpha\d y\to0 \qquad \text{as}\;\;x\to 0.
 \eeqlb
By a change of variables, (\ref{s5-t1-5}) holds if
$$
 \Upsilon_{\phi}^\alpha(x)\to 0\quad\text{as}\;\; x\to 0.
$$
From the continuity of $\phi$ and the continuity of the function
$(r, D)\to r^D$ (see Meerschaert and Scheffler
\cite[p.30]{MSBook}), we have  that
$$
\Big\|\phi(x-y)^{D-qI/\alpha}-\phi(-y)^{D-qI/\alpha}\Big\|\to 0
\quad\text{as}\;\; x\to0
$$
for every $y\in\R^d\backslash \{0\}$. Moreover, from the argument
in Part (i), it follows that for a sufficiently large $\gamma>0$,
there exists a positive constant $C_{12}$ such that
$\big\|\phi(x-y)^{D-qI/\alpha}-\phi(-y)^{D-qI/\alpha}\big\|^\alpha$
is bounded by
 \beqnn
\Phi(x,y)&=&C_{12}{\bf 1}_{\{\tau(y)\leq\gamma\}} (\tau(y)^{\alpha
h-\delta-q}+\tau(x-y)^{\alpha h-\delta-q})\\
&&+C_{12} \tau(y)^{\alpha H+\delta_1-qI-\alpha\beta}\tau(x)^{\alpha\beta}
  {\bf 1}_{\{\tau(y)>\gamma\}},
  \eeqnn
where $0<\delta<\alpha h$ and $0<\delta_1<\alpha(\beta-H)$. It is
easy to see that  $\Phi(x, y)\to \Phi(y)$ a.e. as $x\to 0$, where
$$
\Phi(y)=2C_{12}{\bf 1}_{\{\tau(y)\leq\gamma\}}\tau(y)^{\alpha
h-\delta-q},
$$
 and that
$$
\int_{\R^d}\Phi(x, y)\d y\to\int_{\R^d}\Phi(y)\d y.
$$
By the generalized dominated convergence theorem (see
\cite[p.492]{E86}), (\ref{s5-t1-5}) holds.

(iii). In order to show that for all $r > 0$
$$
\big\{X_\phi(r^Ex),\, x \in
\R^d\big\}\stackrel{d}{=}\big\{r^DX_\phi(x),\, x \in \R^d\big\},
$$
we note that, by Theorem \ref{s4-t1}, it is sufficient to
prove that for all $k\geq 1$, $x_j \in \R^d$ and $\theta_j\in\R^m$
($j=1,2,\cdots, k$)
\begin{equation}\label{Eq:415}
\begin{split}
&\int_{\R^d}\bigg | \sum_{j=1}^k \Big(\phi(r^Ex_j-y)^{D-qI/\alpha}-
\phi(-y)^{D-qI/\alpha}\Big)^* \theta_j\bigg|^\alpha\d y\\
& = \int_{\R^d}\bigg | \sum_{j=1}^k
r^D\Big(\phi(x_j-y)^{D-qI/\alpha}- \phi(-y)^{D-qI/\alpha}\Big)^*
\theta_j\bigg|^\alpha\d y.
\end{split}
\end{equation}
This can be verified by an appropriate change of variables. By the
$E$-homogeneity of $\phi$, we have
\beqnn
&&
\int_{\R^d}\bigg|\sum_{j=1}^k \Big(\phi(r^Ex_j-y)^{D-qI/\alpha}-
\phi(-y)^{D-qI/\alpha}\Big)^* \theta_j\bigg|^\alpha\,\d y\nonumber
\\
&&\qquad=\int_{\R^d}\bigg|\sum_{j=1}^k
\bigg( r^{D-qI/\alpha}\Big(\phi(x_j-r^{-E}y)^{D-qI/\alpha}
-\phi(-r^{-E}y)^{D-qI/\alpha}\Big)\bigg)^*\theta_j \bigg|^\alpha\d y\\
&&\qquad =\int_{\R^d}\bigg|\sum_{j=1}^k \bigg(r^{D-qI/\alpha}\Big
(\phi(x_j-y)^{D-qI/\alpha}-\phi(-y)^{D-qI/\alpha}\Big)\bigg)^*\theta_j\bigg|^\alpha
r^q\d y\\
&&\qquad =\int_{\R^d}\bigg|\sum_{j=1}^k \bigg(r^D\Big(\phi(x_j-y)^{D-qI/\alpha}
-\phi(-y)^{D-qI/\alpha}\Big)\bigg)^*\theta_j\bigg|^\alpha\d y.
\eeqnn
This proves (\ref{Eq:415}) and thus $X$ is an $(E, D)$-o.s.s. random field.

(iv). In the same way, we can verify that $X_\phi(x)$ has stationary
increments. The details are omitted.\qed
\end{proof}

Finally, we prove Theorem \ref{s5-t2}.

\begin{proof} {\it of Theorem \ref{s5-t2}}.
The proof is essentially an extension of the proofs of Theorem 4.1
and Corollary 4.2 in \cite{BMS07}. We only show that the stable
random field $\widetilde{X}_\psi$ is well defined. Then properness of
the $\widetilde{X}_\psi$ follows from the fact that for the matrix
$\psi(y)^{-D-qI/\alpha}$ is invertible for every $y \in \R^d\backslash\{0\}$.
The verification of the rest conclusions on $\widetilde{X}_\psi$
is left to the reader.

By Theorem \ref{s4-t2}, it suffices to show that
$$
\Upsilon_{\psi}(x):=\int_{\R^d}\Big(|1-\cos\langle x,
y\rangle|^\alpha+
|\sin\langle x, y\rangle|^\alpha\Big)\Big\|\psi(y)^{-D-q I/\alpha}
\Big\|^\alpha \,\d y<\infty.
$$
Let $(\tau_1(x), l_1(x))$ be the polar coordinates of $x$
under the operator $E^*$. By (\ref{s5-2}) and Lemma
\ref{s5-l2}, there exist
$0<\delta< [\frac{\alpha}{1+\alpha}(a_1-H)]\wedge
(\alpha h)$ and $C_{13}>0$, such that
 $${\bf 1}_{\{\tau_1(y)\geq 1\}}\Big\|\psi(y)^{- D-q
 I/\alpha}\Big\|^\alpha
 \leq C_{13} \tau_1(y)^{-\alpha h+\delta-q},
 $$
and
 $$
{\bf 1}_{\{\tau_1(y)<1\}}\Big\|\psi(y)^{-D-q
I/\alpha}\Big\|^\alpha\leq C_{13} \tau_1(y)^{-\alpha H-\delta-q}.
$$
Then by Lemma \ref{s5-l1}, $\Upsilon_{\psi}(x)$ is bounded by
\beqnn
&& C_{13}\int_1^\infty\int_{\Sigma_0}\Big(|1-\cos\langle x,
r^{E^*}\theta\rangle|^\alpha
 +|\sin\langle x, r^{E^*}\theta\rangle |^\alpha\Big)\,
 r^{-\alpha h+\delta-1}\sigma(\d\theta)\d r
 \\&&+C_{13}\int_0^1\int_{\Sigma_0}\Big(|1-\cos\langle x,
 r^{E^*}\theta\rangle |^\alpha
+|\sin\langle x, r^{E^*}\theta\rangle |^\alpha\Big)\, r^{-\alpha
H-\delta-1}\sigma(\d\theta)\d r.
\eeqnn
Note that there is a constant $C_{14}>0$ such that
$$
\big|1-\cos\langle x, r^{E^*}\theta\rangle \big|^\alpha+
\big|\sin\langle x, r^{E^*}\theta\rangle \big|^\alpha \leq
C_{14}\big(1+|x|^\alpha\big) \big(r^{\alpha(a_1-\delta)}\wedge 1\big).
$$
Therefore
\beqnn
\Upsilon_{\psi}(x)\leq C_{15}\, \big(1+|x|^\alpha\big)\sigma(\Sigma_0)
\Bigg[\int_1^\infty r^{-\alpha h+\delta-1}\d
r+\int_0^1 r^{\alpha(a_1-H)-(1+\alpha)\delta-1}\d r\Bigg].
\eeqnn
Since $0<\delta< \big[\frac{\alpha}{1+\alpha}(a_1-H)\big]\wedge
(\alpha h)$ and $\sigma$ is a finite measure on $\Sigma_0$, we have
$\Upsilon_{\psi}(x)<\infty$ for every $x\in\R^d$. This proves that
$\widetilde{X}_\psi$ is a well-defined stable random field. \qed
\end{proof}

The moving-average-type and harmonizable-type
o.s.s. stable random fields are quite different
(e.g., even in the special case of $D=I$, the
regularity properties of ${X}_{\phi}$ and
$\widetilde{X}_{\psi}$ are different.) From both
theoretical and applied points of view, it is important
to investigate the sample path regularity and fractal properties
of the $(E, D)$-o.s.s. $S \alpha S$-random fields $X_\phi$ and
$\widetilde{X}_\psi$. We believe that many
sample path properties such as H\"older continuity and fractal
dimensions of $X_\phi$ and $\widetilde{X}_\psi$ are
determined mostly by the real parts of the eigenvalues of $E$ and
$D$. It would be interesting to find out the precise connections.
We refer to Mason and Xiao \cite{MX00}, Bierm\'e and Lacaux
\cite{BL07} and Xiao \cite{Xiao09b,Xiao10} for related results in
some special cases.

\bigskip

\ack This paper was written while Li was visiting Department of
Statistics and Probability, Michigan State University (MSU) with
the support of China Scholarship Council (CSC) grant. Li thanks
MSU for the good working condition and CSC for the financial
support.

The authors thank the anonymous referee for pointing out a connection
between the operator-self-similarity in the present paper and the
group-self-similarity introduced by  Kolody\'{n}ski and Rosi\'{n}ski
\cite{KR03}. His/Her comments have helped to improve the manuscript
significantly.
\bigskip

\renewcommand{\baselinestretch}{1.0}

\small

\vskip.4in

\begin{quote}
\begin{small}

\textsc{Yuqiang Li}: School of Finance and Statistics, East China Normal
University, Shanghai 200241, China.\\
E-mail: \texttt{yqli@stat.ecnu.edu.cn}\\
URL: \texttt{http://yqli.faculty.ecnu.cn}\\
[1mm]

\textsc{Yimin Xiao}: Department of Statistics
and Probability, A-413 Wells Hall, Michigan State
University, East Lansing, MI 48824, U.S.A.\\
E-mails: \texttt{xiao@stt.msu.edu}\\
URL: \texttt{http://www.stt.msu.edu/\~{}xiaoyimi}

\end{small}
\end{quote}

\end{document}